\providecommand{\U}[1]{\protect\rule{.1in}{.1in}}
\newtheorem{theorem}{Theorem}
\newtheorem{proposition}[theorem]{Proposition}
\theoremstyle{definition}
\newtheorem{definition}[theorem]{Definition}
\newtheorem{remark}[theorem]{Remark}
\def\VIZ#1{(\ref{#1})}
\def\RENYI#1#2#3{\mathcal{R}_{#1}\left({#2}\SEP{#3}\right)}
\def\SEP{{\,||\,}}
\def\a{\alpha}
\def\b{\beta}
\def\g{\gamma}
\def\d{\delta}
\def\e{\epsilon}
\def\8{\theta}
\newcommand\encircle[1]{\tikz[baseline=(X.base)] \node (X) [draw, shape=circle, inner sep=0] {\strut #1};}
\begin{document}

\title{Sensitivity Analysis for Rare Events based on R\'enyi Divergence}

\author{
Paul Dupuis\thanks{dupuis@dam.brown.edu, Research supported in part by the Defense Advanced Research Projects Agency (DARPA) EQUiPS program (W911NF-15-2-0122), the Air Force Office of Scientific Research (AFOSR) (FA-9550-18-1-0214) and the National Science Foundation (DMS-1317199)}\\ 
{\small Division of Applied Mathematics, Brown University, Providence, RI 02912, USA} 
\and Markos A. Katsoulakis\thanks{markos@math.umass.edu, Research supported in part by the Defense Advanced Research Projects Agency (DARPA) EQUiPS program (W911NF-15-2-0122), the Air Force Office of Scientific Research (AFOSR) (FA-9550-18-1-0214) and the National Science Foundation (DMS-1515712)} 
\\ {\small Department of Mathematics and Statistics, 
University of Massachusetts Amherst, Amherst, MA 01002, USA}
\and  Yannis Pantazis\thanks{pantazis@iacm.forth.gr}
\\{\small  Institute of Applied and Computational Mathematics, Foundation for Research and Technology - Hellas, Heraklion, 70013, Greece} 
\and  Luc Rey-Bellet\thanks{luc@math.umass.edu, Research supported in part by the National Science Foundation (DMS-1515712) and the Air Force Office of Scientific Research (AFOSR) (FA-9550-18-1-0214)}\\ {\small Department of Mathematics and Statistics, University of Massachusetts Amherst, Amherst, MA 01002, USA} 
}
 
\date{\today}

\maketitle
\begin{abstract}
Rare events play a key role in many applications and numerous algorithms have been  proposed for estimating 
the probability of a rare event.  However, relatively little is known on  how to quantify the sensitivity of the
probability with respect to model parameters.  
In this paper, instead of the  direct statistical estimation of rare event sensitivities, we develop novel and general uncertainty quantification and sensitivity bounds which are not tied to specific rare event simulation methods and which apply to families of rare events.  Our method is based on a recently derived variational  representation for the family of R\'enyi divergences in terms of risk sensitive functionals associated with the rare events under consideration. Based on the derived bounds, we propose new sensitivity indices for rare events and relate them to the moment generating function of the score function.  The bounds scale in such a way that we additionally develop sensitivity indices  for  large deviation rate functions.
\end{abstract}


\section{Introduction and Main Result}\label{sec:intro}

Rare events play an important role in a wide range of applications. For example in insurance, finance and 
risk management, rare events play an outsized role due to potentially  catastrophic consequences \cite{pha}.  
In queueing theory the probability of a buffer overflow often needs to be estimated \cite{Rubino:book}, and
in molecular  dynamics, metastability effects play a crucial role in determining the behavior of the 
system \cite{caikaldekbul}.  Similarly, extreme value theory studies  events and statistical samples which are far from the typical observed \cite{Embrechts:1997}. There is a large body of literature on rare event simulation and many different techniques have been developed to approximate the probability of a rare event. For example, importance sampling \cite{Glynn:book,RK:17,dupwan5} transforms the distribution of random variables in order to make the rare event typical and corrects for bias using the likelihood ratio;  interacting particle systems methods \cite{DM:04,DM:05}
use many (dependent) copies of the system to speed the exploration of state space; splitting techniques \cite{Lec:09,altalt1,gar2,deadup2} decompose the problem of a single rare event into a sequence of not so rare events; 
and so on...
Closely related are multi-level methods inspired primarily by statistical mechanics considerations, e.g.  \cite{Liu:MC}, as well as by rare events involving barrier crossing in molecular simulation, e.g. \cite{Frenkel:02, Parrinello:Rev:16}.

In this paper we are primarily interested  in the problem 
of uncertainty quantification (UQ) and in particular sensitivity analysis 
for rare events, a problem of practical 
importance whenever there is uncertainty in 
the parameters of the model or even in the model 
itself.  This problem has rarely been
addressed in the literature despite the fact that the statistics of rare events are often heavily influenced by the particular values of model parameters.
The case of Poisson processes in the context of importance sampling for risk models was considered in \cite{AR:99, Abad:01}, while more recent work, \cite{Gobet:2015},
proposed importance sampling combined with splitting techniques in the context of Gaussian models and 
Malliavin calculus.  Some examples using the likelihood ratio method are also discussed  in \cite{DelMoral:2012}.

Our approach in this paper is not based on any 
specific algorithm for rare event simulation but 
rather on novel information theoretic bounds. 
These bounds  allow us to define a new sensitivity index that is independent of the particular event.
Instead, the bounds hold for all rare events with probability above any  fixed threshold.
Specifically we utilize the R\'enyi family of relative entropies (a.k.a. R\'enyi divergence)
as a measure of uncertainty between probability distributions
and a new variational representation for risk-sensitive functionals in terms of R\'enyi 
relative entropy, derived by Atar et al. \cite{Atar:15}, to obtain general bounds on the uncertainty quantification and sensitivity analysis
of families of rare events.  

We first introduce the main objects of interest, namely sensitivity indices for probabilities of rare events,  and discuss  the challenges involved in their estimation.
We then present the main result of the paper: bounds on the sensitivity indices for the families of 
events whose probability is at least $e^{-M}$, where $M$ is any fixed rare event threshold.
At this stage we assume the degree of rarity is characterized by $M$ but later on a large deviation parameter will be introduced in Section \ref{sec:Sens:LDP}.

\medskip
\noindent
{\bf Gradient sensitivity indices for rare events.}
Let $P^{\theta}$ be a family of probability measures parameterized by a vector
$\theta\in\mathbb{R}^{K}$. We assume that $P^{\theta}\ll R$ where $R$ is a reference measure
and we denote by $p^{\theta}=\frac{dP^{\theta}}{dR}$ the corresponding family of 
densities. We also assume that the mapping $\theta\mapsto p^{\theta}(x)$ satisfies suitable
differentiability and integrability conditions in order to interchange integration and differentiation.
For a rare  event $A$ with $0< P^\theta(A) \ll 1$ we define the sensitivity index 
in the direction $v\in\mathbb{R}^{K}$ by 
\begin{equation}
\label{SI1}
S_{v}^{\theta}(A) = v^{T}\nabla_{\theta}\log P^{\theta}(A)  =   \frac{v^{T} \nabla_{\theta}P^{\theta}(A)}{P^{\theta}(A)} ,
\end{equation}
which describes the relative change of the quantity of interest
$P^{\theta}(A)$ with respect to the parameter $\theta$ in the $v$ direction.

One of the simplest ways to estimate the sensitivity index \eqref{SI1} is by
considering finite difference approximations for each partial derivative, i.e.,
considering all coordinate unit directions $v=e_{i} \in\mathbb{R}^{K}, i=1,
2,..., K$:
\begin{equation}
S_{e_i}^{\theta}(A)  \approx\frac{\log P^{\theta+\epsilon e_{i}}(A)-\log
P^{\theta}(A)}{\epsilon} \, .\label{SI3}%
\end{equation}
However, the cost of implementing such an approximation can be prohibitive, given the cost
of estimating the small probabilities $P^{\theta+\epsilon e_{i}}(A),
i=1,2,...,K$. A  variant of the likelihood ratio method can at least partially address this
issue, as we discuss next.

\medskip
\noindent
{\bf Likelihood ratio method for rare events.}
The gradient sensitivity for expected values of observables 
can be computed using various methods, such as the likelihood ratio method and infinitesimal perturbation analysis, see e.g. \cite{Glynn:book}.  While these methods are in principle applicable to the problem of computing the rare event sensitivity index \eqref{SI1} (as we show next in the context of likelihood ratio), they still  
require  the  use of some form of accelerated Monte Carlo simulation, for example importance sampling \cite{Glynn:book}, to (possibly) obtain acceptable performance when rare events are involved.

We define the score function for the parametric family $P^\theta$ by 
\begin{equation}
W^{\theta}(x):=\nabla_{\theta}\log p^{\theta}(x)\,,\label{score}%
\end{equation}
with the convention that $W^{\theta}(x)=0$ if $p^\theta(x)=0$.  
We also denote by $P^{\theta}_{|A}$ the probability $P^\theta$ conditioned on the event $A$, i.e.,
$P^{\theta}_{|A}(dx)=\frac{1}{P^{\theta}(A)}\chi_{A}(x)p^{\theta}(x)R(dx)$, where $\chi_{A}$ is the indicator function.

Under suitable conditions to ensure the interchangeability between integrals and  derivatives, 
the sensitivity index for the rare event $A$ given in \eqref{SI1} can be rewritten as 
\begin{equation} \label{LRR} 
S_{v}^{\theta}(A) = \frac{  v^T \nabla_\theta \int_A  p^\theta  dR}{P^{\theta}(A)} \,=\, 
\frac{  v^T \int_A  W^\theta  dP^\theta}{P^{\theta}(A)} =  
  \mathbb{E}_{P^{\theta}_{| A}} \left[ v^T W^{\theta} \right]  \,. 
\end{equation}
An algorithm that estimates \eqref{LRR} by combining the likelihood ratio method with importance sampling through interacting particles was recently developed in \cite{DelMoral:2012}.
 Both approaches, \eqref{SI3} and \eqref{LRR},
 are  feasible only when an accelerated Monte Carlo scheme appropriate to the particular event $A$ has been designed.
Therefore, sensitivity analysis methods for rare events  which would apply to a  whole class of events $A$ (or more generally expected values which are sensitive to rare events), would be a more  practical computational tool.
One approach to this end is to derive  upper and lower bounds for $S_{v}^{\theta}(A)$ that can serve as  new sensitivity indices. 
These are of course less accurate, but may be much easier to compute, and can be used to identify those parameters for which greater accuracy is not needed. We show next that the well-known Cramer-Rao type bounds are not useful for the sensitivity analysis of rare events.

\medskip 
\noindent
{\bf Failure of Cramer-Rao type bounds.}  The sensitivity index for a regular, i.e., non-rare, observable has the form  
$\nabla_\theta \mathbb{E}_{P^\theta}[f]=\mathbb{E}_{P^\theta}[ f W^\theta]$. This can be easily  bounded using the Cramer-Rao inequality, \cite{Kay:1993}
i.e., 
\[
\left| v^T \nabla_\theta \mathbb{E}_{P^\theta}[f] \right| \,\le \,  \sqrt{ {\rm Var}_{P^\theta} [f] } \sqrt{ v^T  \mathcal{F}(P^{\theta})
v}
\]
where $\mathcal{F}(P^{\theta}) = \mathbb{E}_{P^{\theta}}[ W^\theta (W^\theta)^T]$ is the Fisher information matrix.  Applying the Cramer-Rao bound to a rare event $A$ yields 
\begin{equation}
\label{eq:cramer-rao}
| S_v^\8(A) | = \frac{1}{P^\8(A)} \left| \mathbb E_{P^\theta} [\chi_A v^T W^\theta] \right| 
\le  \sqrt{\frac{1-P^\8(A)}{P^\8(A)}}  \sqrt{v^T \mathcal{F }(P^\8) v} \, .
\end{equation}
Unfortunately, this (naive) sensitivity bound is rather useless since it scales as $P^{\theta}(A)^{-1/2}$. This can be very large for a rare event $A$, while one expects the sensitivity index to be of order $O(1)$.

\medskip
\noindent
{\bf Information-based sensitivity indices for rare events.}
In view of the difficulty of directly approximating \eqref{SI1},
the main contribution in this 
paper is to develop information-based bounds for the 
sensitivity indices defined in \eqref{SI1} that apply to {\it families} of rare  events.
The bounds  involve only a single risk-sensitive  functional for the score function $W^\theta$.
While this quantity must be approximated, the bound does not require a different rare 
event sampler for each distinct rare event. 
One of the main results proved in this paper is presented next, while complete technical assumptions on $P^\theta$ 
are given in Section \ref{sec:Sens:bounds}.



\smallskip
\noindent
{\bf \em Main result: Sensitivity bounds and a sensitivity index for rare events.}
{\em
For $0 < M < \infty$ let \begin{equation*}
\bar{\mathcal{A}}_{M}=\{ A \,:\,   P^{\theta}(A) = e^{-M} \}\,.
\label{familyR}
\end{equation*}
denote the sets, parametrized by the positive constant $M$, of all events which are equally probable (or rather equally rare if $M\gg 0$). 
Then, for any $A \in \bar{\mathcal{A}}_M$ we have  
\begin{equation}
\label{eq:inequality:index:intro}
\mathcal{I}_{v,-}^{\theta}(M) \le S_{v}^{\theta}(A)
\le \mathcal{I}_{v,+}^{\theta}(M)\,,  
\end{equation}
where 
\begin{equation}
\mathcal{I}_{v,\pm}^{\theta}(M):= \pm  \inf_{\alpha>0}\left\{
\frac{H_{v}^{\theta}(\pm \alpha)+M}{\alpha}\right\}  \quad {\rm with} \quad 
H_{v}^{\theta}(\alpha)=\log\mathbb{E}_{P^{\theta}}\left[  e^{\alpha
v^{T}W^{\theta}}\right] \,. 
\label{InfoSI}%
\end{equation}
(Note that $H_{v}^{\theta}(\alpha)$  is the cumulant generating function for the score function 
$W^{\theta}$ defined in \eqref{score}.)
Furthermore, denoting by  $P^\theta_\alpha$ the exponential family of tilted measures \[
\frac{dP_\alpha^\theta}{dP^\theta} = e^{  \alpha v^T W^\theta - H_v^\theta(\alpha)}\,,
\]  
we have 
\begin{equation}\label{InfoSI2}
\mathcal{I}_{v,\pm}^{\theta}(M) = \frac{d}{d\alpha} H_v^\theta(\alpha) \Big|_{\alpha=\pm\alpha_\pm} = \mathbb{E}_{P^{\theta}_{\alpha_\pm}}  \left[ v^{T}W^{\theta} \right]\,.
\end{equation}
Here  $\alpha_\pm$ are determined by 
 \[
\mathcal{R} ( { P^{\theta}_{\pm \alpha_{\pm}}}{\,||\,}{P^{\theta}}) =  M \,,
\]
and $ \mathcal{R} ( Q {\,||\,} P)$ denotes the relative entropy of $Q$ with respect to $P$.  See Figure~\ref{prop:3:1:fig} for a graphical depiction of the characterization of $\alpha_{\pm}$.
}
\medskip


The proposed rare event sensitivity indices \eqref{InfoSI} are  bounds  
for the gradient-based indices \eqref{SI1}. They do  not require a rare 
event sampler for {\em each} rare  event $A$, 
as one readily sees in the definition of \eqref{InfoSI} or \eqref{InfoSI2}, 
and they apply to the entire class $\bar{\mathcal{A}}_{M}$  of rare events, 
that is for the  probability level sets for the parametric  model 
$P^\theta$.
Intuitively, $\mathcal{I}_{v,\pm}^{\theta}(M)$ balances between the rarity of the event as quantified by $M$
and the cost to be paid in order to make the event less rare as quantified by $H_v^\theta(\alpha)$.
Note also that,  due to the monotonicity of \eqref{InfoSI} in $M$, 
the rare event sensitivity indices $\mathcal{I}_{v}^{\theta, \pm}(M)$ 
actually characterize the sensitivity of the model $P^\theta$ for each 
family 
$$
{\mathcal{A}}_{M}:=\{ A:  P^{\theta}(A)\ge e^{-M}\}\, ,
$$
i.e., all events which are less rare than the threshold  $e^{-M}$.
In this sense, the bounds \eqref{eq:inequality:index:intro} 
present similar computational advantages and  trade-offs as 
other sensitivity bounds  for  typical observables (not rare 
event dependent), such as the Cram\'er-Rao information  
bounds, see \eqref{eq:cramer-rao}.  Namely they are less 
accurate than the exact gradient-based indices \eqref{SI1},
but they can be  used  to determine insensitive parameters 
and directions $v$, without requiring  recalculation for 
different events $A$.  We present a simple demonstration of such an insensitivity analysis based on 
the bounds \eqref{eq:inequality:index:intro}  in the last example of Section~\ref{sec:examples}.

Additionally, in ordinary Cramer-Rao bounds, the 
sensitivity of the parametric model is encoded into the 
Fisher information matrix (the variance of the score 
function), for rare event the cumulant generating function of 
the score function plays a central role. Since the cumulant 
generating function controls rare events (as in Cramer's 
Theorem) our bounds show that the rare events associated to 
the score function control the sensitivity of all rare events.
The question of the tightness of the sensitivity bounds will 
be  addressed in \cite{DKRW:17}, which discusses the UQ and sensitivity 
analysis for more general risk-sensitive functionals. 

For a  practical implementation of the bound one can use concentration equalities \cite{Boucheron:2016} in a similar manner  to UQ bounds for regular observables,  \cite{Gourgoulias:17}, and obtain easily computable but less accurate bounds,
see Section \ref{sec:estimation}. 
As the bound involves the moment generating function of the score function, the  rare event simulation techniques mentioned at the beginning of the introduction could also in principle be used to solve the optimization problem in the sensitivity indices.  We plan to revisit this issue in a follow-up publication.

The paper is organized as follows. In Section 
\ref{sec:math:prelim} we begin with the study of an
optimization problem which appears several times throughout the 
paper and then proceed with the definition and the properties of
the R\'enyi divergence. In Section \ref{sec:UQ:bounds} 
we derive our main UQ bounds
based on the R\'enyi divergence optimized over its 
parameter. We then derive in Section 
\ref{sec:Sens:bounds} information inequalities for the sensitivity 
indices.  
In Section~\ref{sec:estimation} we discuss the practical implementation of the sensitivity indices for rare events, 
via concentration inequalities or  via  numerical estimation.
%
We illustrate our results on several distributions from the exponential 
family in Section \ref{sec:examples},  and in Section \ref{sec:Sens:LDP}, we present 
sensitivity bounds for large deviation rate 
functions. 

\section{Mathematical Preliminaries}
\label{sec:math:prelim}

\subsection{An optimization problem}

\label{sec:opt:problem}

In order to obtain optimal UQ bounds we have to consider a certain
optimization problem involving cumulant generating functions. We note that a
bound function with similar structure has been derived and studied recently in
\cite{Chowdhary:13, Li:12, Dupuis:16}, and we slightly generalize and reformulate those
results in this section.

Let $\mathcal{X}$ be a Polish space, $\mathcal{B}(\mathcal{X})$ the associated
Borel $\sigma$-algebra and denote by $\mathcal{P}(\mathcal{X})$ the set of all
probability measures on $(\mathcal{X},\mathcal{B}(\mathcal{X}))$.
Given a probability measure $P \in\mathcal{P}(\mathcal{X})$ and a measurable
function $g: \mathcal{X} \to\mathbb{R}$ consider the moment
generating function $\mathbb{E}_{P}\left[  e^{\alpha g}\right] $ with
$\alpha\in\mathbb{R}$. We will assume that $g$ is such that the moment
generating function is finite in a neighborhood of the origin and denote the
space of such functions by $\mathcal{E}$.
%

\begin{definition}\label{def:gdef}
A measurable function $g:\mathcal{X}\to\mathbb{R}$ belongs to the set
$\mathcal{E}$ if and only if there exists $\alpha_{0}>0$ such that
$\mathbb{E}_{P}\left[  e^{\pm\alpha_{0} g}\right]  < \infty$\,.
\end{definition}

If $g\in\mathcal{E}$ then as is well known $g$ has finite
moments of all orders, see also the discussion Appendix \ref{app:a}.

\begin{definition}
\label{CGF:LT:gen:def} Given $P \in\mathcal{P}(\mathcal{X})$ and $g
\in\mathcal{E}$ the \textit{cumulant generating function} of $g$ is given by
\begin{equation*}
H(\alpha) := \log\mathbb{E}_{P} [e^{\alpha g}] \ . \label{CGF:gen}%
\end{equation*}
A family of probability measures naturally associated to $H(\alpha)$ is the
exponential family $P_{\alpha}$ given by
\begin{equation*}
\frac{dP_{\alpha}}{dP} \,=\, e^{ \alpha g - H(\alpha)} \,,
\end{equation*}
which is well-defined if $H(\alpha) < \infty$.
\end{definition}
In Appendix~\ref{app:a}, we summarize various useful properties of cumulant generating functions. These will be needed to study the following minimization problems, which arise in the definition of the sensitivity indices introduced in Section~\ref{sec:intro}.

%
%

\begin{proposition}
 \label{aux:bound:func:gen:prop} Let $P \in\mathcal{P}(\mathcal{X})$ and $g
\in\mathcal{E}$ with $g$ not a constant $P$-a.s. Suppose $(d_-,d_+)$ is the largest open set such that $H(\alpha)<\infty$ for all $\alpha \in (d_-,d_+)$.

\begin{enumerate}
\item For any $M \ge0$ the optimization problems
\begin{equation*}
\inf_{\alpha>0} \frac{H(\pm\alpha)+M}{\alpha}
\end{equation*}
have unique minimizers $\alpha^{\pm}\in[0, \pm d_\pm]$.   Let $M_\pm$ be defined by 
\[
M_\pm = \lim_{\alpha \nearrow \pm d_\pm}   \pm \alpha H'( \pm \alpha) - H(\pm \alpha) \,.
\]
Then 
the minimizers $\alpha_{\pm}=\alpha_{\pm}(M)$
are finite for $M < M_{\pm}$ and $\alpha_{\pm}(M) = \pm d_\pm$ if $M \geq
M_{\pm}$.

\item If $\alpha_{\pm}(M) < \pm d_{\pm}$ then
\begin{equation}
\frac{H(\pm\alpha_{\pm})+M}{\alpha_{\pm}}=
\inf_{\alpha>0} \frac{H(\pm\alpha)+M}{\alpha}
\,=\,\pm H^{\prime}%
(\pm\alpha_{\pm})=\pm \mathbb{E}_{P_{\pm\alpha_{\pm}}}[g]\,,\label{eq:Bpmeqn}%
\end{equation}
where $\alpha_{\pm}(M)$ is strictly increasing in $M$ and is determined by the
equation
\begin{equation}
\mathcal{R}\left(  {P_{\pm\alpha_{\pm}}}{\,||\,}{P}\right)
=M\,.\label{eq:cond:re}%
\end{equation}

\item $M_{\pm}$ is finite in two distinct cases.
\begin{enumerate}
\item If $\pm d_\pm < \infty$ (in which case $g$ is unbounded above/below) $M_\pm$ is finite if $\lim_{\alpha \to \pm d_\pm} H(\pm \alpha) := H( d_\pm) < \infty$ and  $\lim_{\alpha \to \pm d_\pm} \pm H'(\pm \alpha) := \pm H'(d_\pm) < \infty$, and  for $M\ge M_{\pm}$ we have 
\begin{equation}
\label{eq:dplus}
\inf_{\alpha>0} \frac{H(\pm\alpha)+M}{\alpha} = \frac{H( d_{\pm})+M}{ \pm d_\pm}= \pm\mathbb{E}_{P_{ d_{\pm}}}[g] +  \frac{M - M_+}{\pm d_\pm} \,.
\end{equation}

\item If $\pm d_\pm = \infty$ and $M_\pm$ is finite then $g$ is $P$-a.s. bounded above/below and for $M \ge M_{\pm}$ we have 
\begin{equation}
\label{eq:gplus}
\inf_{\alpha>0} \frac{H(\pm\alpha)+M}{\alpha}
=\mathrm{ess\,sup}%
_{x\in\mathcal{X}}\{\pm g(x)\}\,.
\end{equation}
\end{enumerate}
\end{enumerate}
\end{proposition}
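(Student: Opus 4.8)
\medskip
\noindent
\textbf{Proof plan.} It suffices to treat the ``$+$'' statements, since applying them to $-g$ in place of $g$ (whose cumulant generating function is $\alpha\mapsto H(-\alpha)$) interchanges the roles of $d_+,d_-$, of $\alpha_+,\alpha_-$ and of $M_+,M_-$, and yields the ``$-$'' statements. The whole argument is then a one-variable analysis of $\phi(\alpha):=(H(\alpha)+M)/\alpha$ on $(0,d_+)$. First I would collect from Appendix~\ref{app:a} that on $(d_-,d_+)$ the function $H$ is finite, $C^\infty$ and strictly convex, with $H(0)=0$, $H'(\alpha)=\mathbb{E}_{P_\alpha}[g]$, and $H''(\alpha)=\Var_{P_\alpha}[g]$ (strictly positive because $g$ is not $P$-a.s.\ constant, hence not $P_\alpha$-a.s.\ constant since $P_\alpha\sim P$), together with the identity $\mathcal{R}(P_\alpha\,||\,P)=\alpha H'(\alpha)-H(\alpha)$ (integrate $\log(dP_\alpha/dP)=\alpha g-H(\alpha)$ against $P_\alpha$).

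The workhorse is $\psi(\alpha):=\alpha H'(\alpha)-H(\alpha)=\mathcal{R}(P_\alpha\,||\,P)$ on $(0,d_+)$. Direct differentiation gives $\phi'(\alpha)=(\psi(\alpha)-M)/\alpha^2$ and $\psi'(\alpha)=\alpha H''(\alpha)>0$; with $\psi(0^+)=0$ and $\psi(d_+^-)=M_+$ (the definition of $M_+$) this shows $\psi$ is a continuous strictly increasing bijection of $[0,d_+)$ onto $[0,M_+)$. Item~1 is then immediate: for $0<M<M_+$ there is a unique $\alpha_+\in(0,d_+)$ with $\psi(\alpha_+)=M$, and since $\phi'<0$ on $(0,\alpha_+)$ and $\phi'>0$ on $(\alpha_+,d_+)$ this is the unique minimizer; for $M=0$ one has $\psi>M$ throughout, so $\phi$ is strictly increasing with (limiting) minimizer $\alpha_+=0$ and infimum $\lim_{\alpha\to0^+}\phi(\alpha)=H'(0)=\mathbb{E}_P[g]$; for $M\ge M_+$ one has $\psi<M$ throughout, so $\phi$ is strictly decreasing and the (limiting) minimizer sits at $d_+$. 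For item~2, substituting $M=\psi(\alpha_+)=\alpha_+H'(\alpha_+)-H(\alpha_+)$ into $\phi(\alpha_+)$ collapses it to $H'(\alpha_+)=\mathbb{E}_{P_{\alpha_+}}[g]$; strict monotonicity of $\alpha_+(M)$ is that of $\psi^{-1}$; and \eqref{eq:cond:re} is literally $\psi(\alpha_+)=M$.

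For item~3 I would split on whether $d_+$ is finite. If $d_+<\infty$ then $g$ must be unbounded above (a $P$-a.s.\ bound $g\le b$ forces $\mathbb{E}_P[e^{\alpha g}]\le e^{\alpha b}<\infty$ for all $\alpha$, i.e.\ $d_+=\infty$); assuming in addition $H(d_+):=\lim_{\alpha\to d_+}H(\alpha)<\infty$ and $H'(d_+):=\lim_{\alpha\to d_+}H'(\alpha)<\infty$, the tilt $P_{d_+}$ is well defined, $M_+=d_+H'(d_+)-H(d_+)<\infty$, and for $M\ge M_+$ the infimum equals $\phi(d_+^-)=(H(d_+)+M)/d_+$; rewriting the numerator as $H(d_+)+M_++(M-M_+)$ and using $H(d_+)=d_+H'(d_+)-M_+$ produces \eqref{eq:dplus}, once one checks $\mathbb{E}_{P_{d_+}}[g]=H'(d_+)$ (write $g=g^+-g^-$ and use monotone convergence of $\mathbb{E}_{P_\alpha}[g^\pm]$ as $\alpha\uparrow d_+$ against $\mathbb{E}_{P_\alpha}[g]=H'(\alpha)\to H'(d_+)$). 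If instead $d_+=\infty$ and $M_+<\infty$, I would first deduce that $g$ is $P$-a.s.\ bounded above: from $M_+=\int_0^\infty\alpha H''(\alpha)\,d\alpha$ we get $\int_1^\infty H''<\infty$, so $H'$ and hence $H(\alpha)/\alpha$ are bounded, which is incompatible with the lower bound $H(\alpha)/\alpha\ge c+\alpha^{-1}\log P(g>c)\to c$ holding for arbitrarily large $c$ with $P(g>c)>0$. Then, letting $c\uparrow\mathrm{ess\,sup}\,g$ in the same bound and using the trivial upper bound $H(\alpha)/\alpha\le\mathrm{ess\,sup}\,g$, we obtain $\lim_{\alpha\to\infty}\phi(\alpha)=\lim_{\alpha\to\infty}H(\alpha)/\alpha=\mathrm{ess\,sup}_{x}g(x)$, which is the infimum (as $\phi$ is strictly decreasing for $M\ge M_+$), giving \eqref{eq:gplus}.

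I expect the delicate part to be item~3 — correctly pinning down the equivalences between finiteness of $M_\pm$, finiteness of $d_\pm$, and (un)boundedness of $g$, and identifying the limiting value of $\phi$ at the boundary either as the tilted mean $\mathbb{E}_{P_{d_\pm}}[g]$ or as $\mathrm{ess\,sup}(\pm g)$. Everything on the open interval is routine convexity bookkeeping once the identity $\phi'(\alpha)=(\mathcal{R}(P_\alpha\,||\,P)-M)/\alpha^2$ is noted.
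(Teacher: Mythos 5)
Your proof is correct and follows essentially the same route as the paper's: reduce to the ``$+$'' case, differentiate $B(\alpha;M)=(H(\alpha)+M)/\alpha$, and exploit the strict monotonicity of $\alpha H'(\alpha)-H(\alpha)=\mathcal{R}\left({P_{\alpha}}{\,||\,}{P}\right)$ from $0$ up to $M_+$, placing the minimizer at the unique root of $\alpha H'(\alpha)-H(\alpha)=M$ when $M<M_+$ and at the boundary otherwise. You in fact supply a few details the paper leaves implicit (notably that $d_+=\infty$ together with $M_+<\infty$ forces $g$ to be bounded above, and the identification $\lim_{\alpha\to\infty}H(\alpha)/\alpha=\mathrm{ess\,sup}_{x\in\mathcal{X}}\{g(x)\}$), but the underlying mechanism is identical.
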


\begin{proof}
The proof of Proposition \ref{aux:bound:func:gen:prop} can be found in Appendix~\ref{app:a}.
A geometric depiction of the Proposition when $M<M_{\pm}$ is shown in Figure~\ref{prop:3:1:fig}(a).
\end{proof}

\begin{figure}[tbh]
\centering
\includegraphics[width=0.8\textwidth]{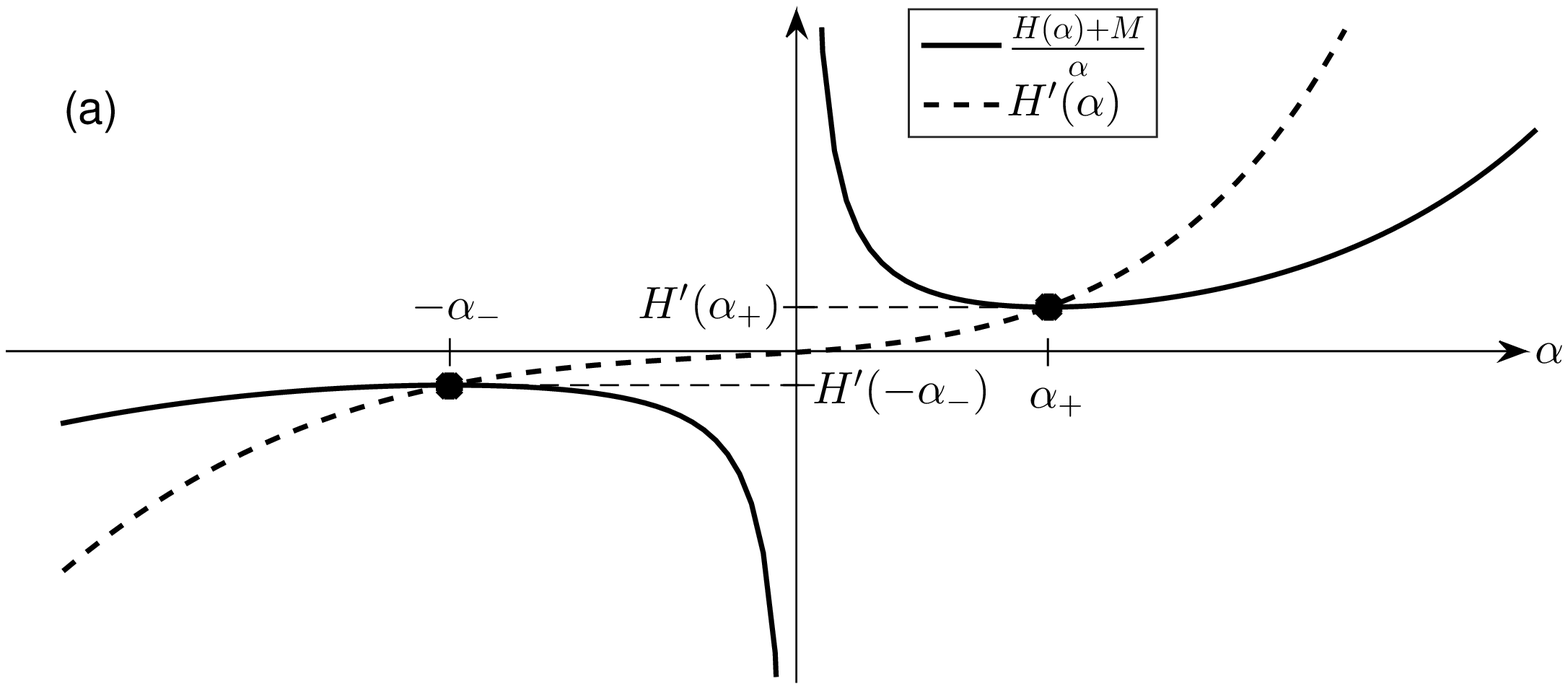}
\includegraphics[width=0.8\textwidth]{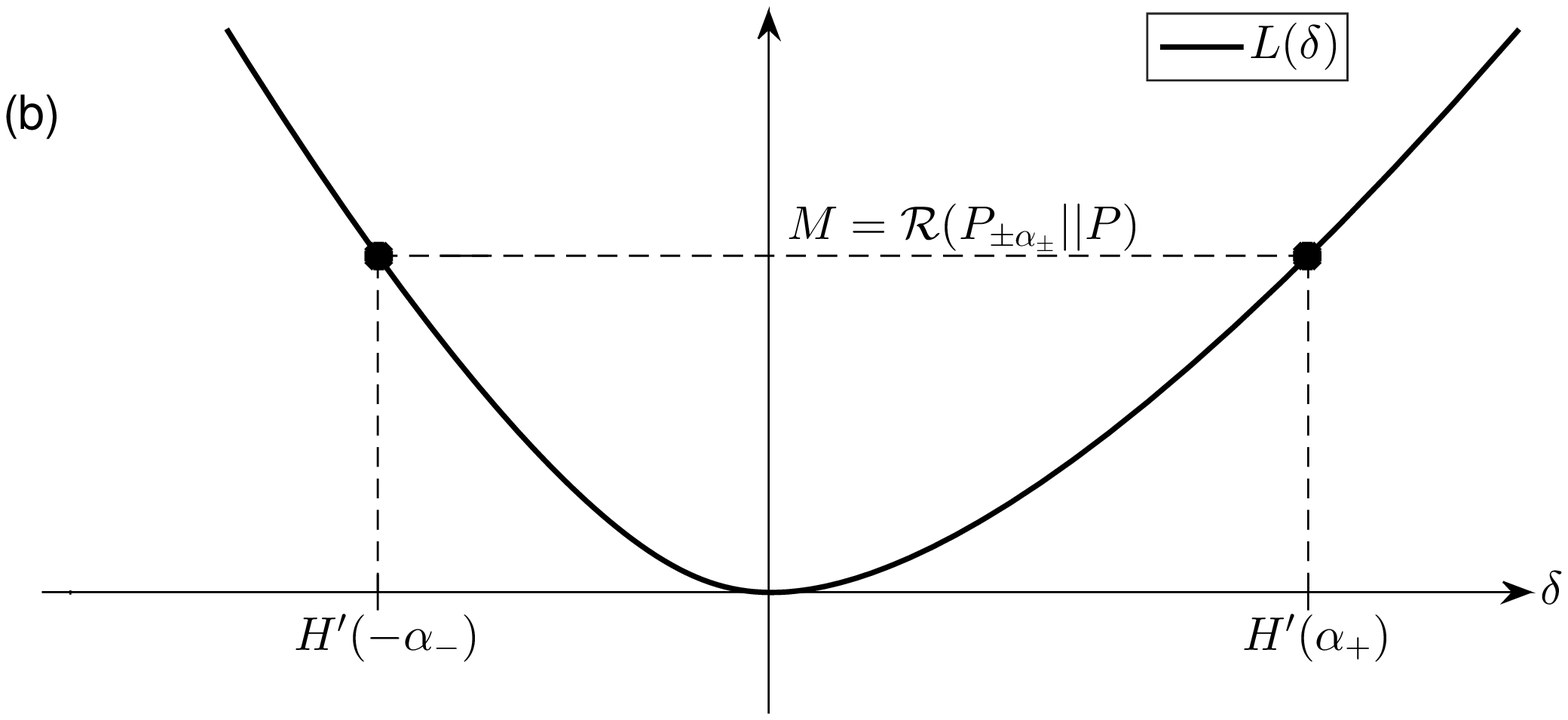}\caption{(a) Graphical representation of \VIZ{eq:Bpmeqn} which depicts the relation between the minimum of $(H(\a)+M)/\a$ (solid line) and the derivative of cumulant generating function, $H'(\a)$ (dashed line). Using the fact that $\inf_{\a>0}(H(-\a)+M)/\a=-\sup_{\a<0}(H(\a)+M)/\a$, we display the relation for $\a_-$ at the third quadrant. Note that both minimizers are attained at the intersections of the two curves, however, the two branches are generally not symmetric resulting in different values (i.e., $\a_+\neq \a_-$).
(b) Graphical representation of Remark \ref{Legendre:remark} which relates the optimal values of $(H(\a)+M)/\a$ with the Legendre transform of the cumulant generating function. For demonstration clarity, we assume that $\mathbb E[g]=0$
so that $L(\delta) \geq 0$ and $L(\delta)=0$ if and only if $\delta=0$.
}%
\label{prop:3:1:fig}%
\end{figure}

Unless the random variable $g$ is symmetric, in general the two 
optimization problems are not related to each other since they 
involve the  cumulant  generating function for $\alpha >0$ and 
$\alpha<0$ respectively.   

\begin{remark}
An alternative characterization of the minimizers uses the Legendre-Fenchel
transform of $H(\alpha)$,
\begin{equation*}
L(\delta)=\sup_{\alpha\in\mathbb{R}}\{\alpha\delta-H(\alpha)\}, \label{LT:gen}%
\end{equation*}
which is a convex function with a
mimimum equal to $0$ that is attained at $\mathbb{E}_{P}[g]$. The optimality condition
\eqref{eq:cond:re} for $\alpha_{\pm}$ can be written, equivalently, as
\begin{equation*}
M=L(\pm\delta_{\pm})
\mathrm{~~where~~}\delta_{\pm}= \pm H^{\prime}(\pm\alpha_{\pm}).
\end{equation*}
Thus the set of values of $M$ for which a finite minimizer exists 
corresponds to the possible level sets for  $L(\delta)$ 
(the  rate function in Cramer's theorem \cite{Dembo:98}), provided $L(\delta)$ is strictly convex. 
%
%
%
\label{Legendre:remark}
\end{remark}

If the function $g$ is centered, i.e., $\mathbb{E}_{P}[g]=0$, then when 
$M=0$ the minimizers are $\alpha_{\pm}=0$ and we can expand $\alpha_{\pm}$ as a 
Taylor series in the variable $\sqrt{M}$ as the following proposition shows,  
which was  proved in \cite[Lemma 2.10]{Dupuis:16} and will be useful for non-rare events. 

\begin{proposition}
[Linearization]\label{opt:bound:linear:gen:prop} If $g\in\mathcal{E}$ with
$\mathbb{E}_{P}[g]=0$ then we have
\begin{equation*}
\inf_{\alpha>0} \frac{H(\pm\alpha)+M}{\alpha}
= \sqrt{2\mathrm{Var}_{P}(g) M}%
+O(M)\ .\label{bound:linearization:eq}%
\end{equation*}

\end{proposition}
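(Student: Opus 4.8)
The plan is to combine the smoothness of the cumulant generating function $H$ at the origin with the characterization of the minimizer supplied by Proposition~\ref{aux:bound:func:gen:prop}. By symmetry it suffices to treat the $+$ case: writing $\tilde g:=-g$ we have $H(-\alpha)=\log\mathbb E_P[e^{\alpha\tilde g}]$, so $\inf_{\alpha>0}(H(-\alpha)+M)/\alpha$ is the $+$ quantity for $\tilde g$, and $\tilde g\in\mathcal E$ is again centered with $\mathrm{Var}_P(\tilde g)=\mathrm{Var}_P(g)$. If $\mathrm{Var}_P(g)=0$ then $g=0$ $P$-a.s., $H\equiv 0$, and both sides of the asserted identity equal $O(M)$; so assume $\sigma^2:=\mathrm{Var}_P(g)>0$, in which case $g$ is not $P$-a.s.\ constant.

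First I would record the local behavior of $H$. Since $g\in\mathcal E$, the properties of cumulant generating functions collected in Appendix~\ref{app:a} give that $H$ is (real analytic, hence) $C^3$ in a neighborhood of $0$, with $H(0)=0$, $H'(0)=\mathbb E_P[g]=0$, and $H''(0)=\sigma^2$. Taylor's theorem then yields, on a fixed neighborhood $[0,\rho]$ of $0$,
\begin{equation*}
H(\alpha)=\tfrac12\sigma^2\alpha^2+O(\alpha^3),\qquad H'(\alpha)=\sigma^2\alpha+O(\alpha^2),
\end{equation*}
with the implied constants uniform on $[0,\rho]$. Next I would invoke Proposition~\ref{aux:bound:func:gen:prop}. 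Because $g\in\mathcal E$ we have $d_+>0$, and $\psi(\alpha):=\alpha H'(\alpha)-H(\alpha)$ satisfies $\psi(0^+)=0$ and $\psi'(\alpha)=\alpha H''(\alpha)>0$ for $\alpha\in(0,d_+)$ (since $H''>0$ there, $g$ being non-constant), so $\psi$ is strictly increasing and $M_+=\psi(d_+^-)>0$. Hence for all sufficiently small $M>0$ the minimizer $\alpha_+=\alpha_+(M)\in(0,d_+)$ is finite, is characterized by $\psi(\alpha_+)=M$, equivalently $\RELENT{P_{\alpha_+}}{P}=M$, satisfies $\alpha_+(M)\to 0$ as $M\to 0^+$, and by \eqref{eq:Bpmeqn} the optimal value equals $H'(\alpha_+)$.

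Then I would invert the defining relation asymptotically. Substituting the Taylor expansions into $\psi(\alpha_+)=M$ gives $\tfrac12\sigma^2\alpha_+^2+O(\alpha_+^3)=M$; since $\alpha_+\to 0$, absorbing the cubic term shows first $\alpha_+=O(\sqrt M)$ and then $\alpha_+^2=\tfrac{2M}{\sigma^2}\bigl(1+O(\sqrt M)\bigr)$, i.e.\ $\alpha_+=\sqrt{2M/\sigma^2}+O(M)$. Feeding this back into the optimal value,
\begin{equation*}
\inf_{\alpha>0}\frac{H(\alpha)+M}{\alpha}=H'(\alpha_+)=\sigma^2\alpha_++O(\alpha_+^2)=\sqrt{2\sigma^2 M}+O(M),
\end{equation*}
which is the claim in the $+$ case; the $-$ case follows by applying this to $\tilde g$ as above. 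The only delicate point is making the remainder estimates quantitative and uniform—checking that the $O(\alpha^3)$ term in the expansion of $\psi$ is bounded by $C\alpha^3$ on a fixed neighborhood of $0$, so the implicit relation $\tfrac12\sigma^2\alpha^2+O(\alpha^3)=M$ can be inverted, and that $\alpha_+(M)$ actually lies in that neighborhood for small $M$; both follow from the monotonicity of $\psi$ and the smoothness of $H$ near the origin. This is precisely the argument of \cite[Lemma~2.10]{Dupuis:16}.
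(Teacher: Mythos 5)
Your proof is correct. The paper itself gives no argument for this proposition, deferring entirely to the citation \cite[Lemma 2.10]{Dupuis:16}; your derivation is the natural self-contained one, and it uses exactly the machinery the paper does develop (the characterization $\alpha_+ H'(\alpha_+)-H(\alpha_+)=M$ and the value $H'(\alpha_+)$ from Proposition~\ref{aux:bound:func:gen:prop}, plus smoothness of $H$ near the origin from Appendix~\ref{app:a}). The reduction of the $-$ case to the $+$ case via $g\mapsto -g$, the separate treatment of $\mathrm{Var}_P(g)=0$, and the asymptotic inversion of $\tfrac12\sigma^2\alpha_+^2+O(\alpha_+^3)=M$ are all handled correctly.
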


\subsection{R\'{e}nyi divergence, relative entropy, and a variational principle}
\label{sec:renyi}

In this section, we discuss the concepts of  R\'{e}nyi divergence
and associated variational representations and duality formulas. These tools provide the mathematical foundations for the uncertainty quantification and sensitivity analysis methods introduced in the subsequent sections.

Given $P,Q\in\mathcal{P}(\mathcal{X})$, we pick a reference measure
$R\in\mathcal{P}(\mathcal{X})$, such that $P\ll R$ and $Q\ll R$ (i.e., $P$ and
$Q$ are absolutely continuous with respect to $R$). Denote by $p=\frac{dP}%
{dR}$ (resp. $q=\frac{dQ}{dR}$) the Radon-Nikodym derivative
of $P$ (resp. $Q$) with respect to the reference measure $R$. Then, for
$\alpha>0,\alpha\neq1$, the R\'{e}nyi divergence of degree $\alpha$ of $Q$
with respect to $P$ is defined by (cf. \cite{Liese:87,Atar:15})
\begin{equation*}
\mathcal{R}_{\alpha}\left(  {Q}{\,||\,}{P}\right)  :=\left\{
\begin{array}
[c]{cl}%
\frac{1}{\alpha(\alpha-1)}\log\int_{pq>0}\left(  \frac{q}{p}\right)  ^{\alpha
}dP & \text{if\ }Q\ll P\text{\ or\ }\alpha<1\\
+\infty & \text{otherwise}%
\end{array}
\right. , \label{Renyi:def}%
\end{equation*}
and is independent of the choice of the reference measure $R$. Another common
definition of R\'{e}nyi divergence utilizes the factor $\frac{1}{\alpha-1}$
(cf. \cite{Renyi:61, Vajda:90, Erven:14}) instead of $\frac{1}%
{\alpha(\alpha-1)}$. When $P$ and $Q$ are mutually absolutely continuous,
R\'{e}nyi divergence can be written without reference to a reference measure
as
\begin{equation}
\mathcal{R}_{\alpha}\left(  {Q}{\,||\,}{P}\right)  =\frac{1}{\alpha(\alpha
-1)}\log\mathbb{E}_{P}\left[  \left(  \frac{dQ}{dP}\right)  ^{\alpha}\right]
=\frac{1}{\alpha(\alpha-1)}\log\mathbb{E}_{P}\left[  \exp\left\{  \alpha
\log\frac{dQ}{dP}\right\}  \right]  \ ,\label{Renyi:def:alt}%
\end{equation}
where the rightmost expression reveals that R\'{e}nyi divergence is
proportional to the cumulant (or log-moment) generating function for the
logarithm of the Radon-Nikodym derivative (i.e., $\log\frac{dQ}{dP}$). The
definition of R\'{e}nyi divergence is extended to $\alpha=1$ by letting
$\mathcal{R}_{\alpha}\left(  {Q}{\,||\,}{P}\right)  =\mathcal{R}\left(
{Q}{\,||\,}{P}\right)  $, where $\mathcal{R}\left(  {Q}{\,||\,}{P}\right)  $ is
the relative entropy (or Kullback-Leibler divergence) defined by
\begin{equation*}
\mathcal{R}\left( {Q}{\,||\,}{P}\right)  :=\left\{
\begin{array}
[c]{cl}%
\int_{pq>0}\frac{q}{p}\log\frac{q}{p}dP & \text{if\ }Q\ll P\\
+\infty & \text{otherwise}%
\end{array}
\right. , \label{rel:entr:def}%
\end{equation*}
and we have $\lim_{\alpha\rightarrow1}\mathcal{R}_{\alpha}\left(  {Q}%
{\,||\,}{P}\right)  =\mathcal{R}\left(  {Q}{\,||\,}{P}\right)$.
In view of \eqref{Renyi:def:alt} we can also extend  
$\mathcal{R}_{\alpha}\left( {Q}{\,||\,}{P}\right)$ to 
negative values of $\alpha$ if $Q$ and $P$  are mutually absolutely 
continuous.
Due to our convention for the definition of 
$\mathcal{R}_{\alpha}\left({Q}{\,||\,}{P}\right)$ we have the 
symmetry  $\mathcal{R}_{\alpha}\left( {Q}{\,||\,}{P}\right)= \mathcal{R}_{1- \alpha}\left( {P}{\,||\,}{Q}\right)$ and thus, in particular, $\mathcal{R}_{0}\left({Q}{\,||\,}{P}\right)=\mathcal{R}\left({P}{\,||\,}{Q}\right)$.
Further properties of the R\'{e}nyi 
divergence can be found for example in \cite{Golshani:09,Vajda:90, Erven:14}.

A variational formula in terms of R\'enyi divergence,  recently derived in \cite[Theorem 2.1]{Chowdhary:13}, will play a central role in this paper. 

\begin{theorem}[Variational representation involving R\'enyi divergence]
\label{thm:renyi:dupuis}
Let $\beta, \gamma \in \mathbb{R} \setminus\{0\}$ 
with  $\beta < \gamma$ and let $P \in \mathcal{P}(\mathcal{X})$.  For any bounded and measurable 
$f: {\mathcal X} \to \mathbb{R}$ we have 
\begin{eqnarray}
\frac{1}{\beta}\log\mathbb{E}_{Q}[e^{\beta f}]
&=&\inf_{P\in\mathcal{P}(\mathcal{X})}
\left\{   \frac{1}{\gamma}\log\mathbb{E}_{P}[e^{\gamma f}]+\frac{1}{\gamma - \beta}\mathcal{R}_{\frac{\gamma}{\gamma -\beta}}\left(  {Q}{\,||\,}{P}\right)
\right\}  \ , \label{risk:sens:inf}  \\
\frac{1}{\gamma}\log\mathbb{E}_{Q}[e^{\gamma f}]&=&\sup_{P\in\mathcal{P}%
(\mathcal{X})}\left\{  \frac{1}{\beta}\log\mathbb{E}_{P}[e^{\beta f}]-\frac{1}{\gamma - \beta}\mathcal{R}_{\frac{\gamma}{\gamma - \beta}}\left(  {P}{\,||\,}{Q}\right)
\right\}  \ . \label{risk:sens:sup}%
\end{eqnarray}
\end{theorem}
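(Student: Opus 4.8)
\emph{Proof plan.} The plan is to derive \eqref{risk:sens:inf} from a single H\"older-type inequality together with an explicit description of the optimizer, and then to obtain \eqref{risk:sens:sup} from \eqref{risk:sens:inf} by interchanging the roles of $P$ and $Q$. (I read the hypothesis as fixing a measure $Q\in\mathcal{P}(\mathcal{X})$, with the infimum and supremum taken over $P\in\mathcal{P}(\mathcal{X})$.) Pick a common dominating measure $R$, write $p=dP/dR$, $q=dQ/dR$, and set $r:=\gamma/(\gamma-\beta)$ and $\theta:=\beta/\gamma$; since $\beta<\gamma$ and neither is $0$ we have $r,\theta\notin\{0,1\}$, $\theta\gamma=\beta$, $r(\gamma-\beta)=\gamma$, and $1-\theta=1/r$. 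I would first reduce to the case $P\sim Q$: if $P$ is not mutually absolutely continuous with $Q$ then the relevant R\'enyi divergence is $+\infty$ (the general convention when $r\ge 1$, and the agreement that the negative-order R\'enyi divergence is defined only under mutual absolute continuity; for $r\in(0,1)$ a singular $P$ likewise forces the value $+\infty$), so such $P$ do not affect the infimum or supremum. Boundedness of $f$ guarantees that all exponential moments below are finite and strictly positive.

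\emph{The inequality.} The heart of the argument is the pointwise identity
\[
\frac{q}{p}\,e^{\beta f}=\bigl(e^{\gamma f}\bigr)^{\theta}\bigl((q/p)^{r}\bigr)^{1-\theta},
\]
valid because $\theta\gamma=\beta$ and $(1-\theta)r=1$. I would then apply Jensen's inequality to $x\mapsto x^{\theta}$ against the probability measure proportional to $(q/p)^{r}\,dP$ (assuming, as one may, that $\mathbb{E}_{P}[(q/p)^{r}]$ is finite and positive), which gives the generalized H\"older bound
\[
\mathbb{E}_{Q}[e^{\beta f}]\;\lessgtr\;\bigl(\mathbb{E}_{P}[e^{\gamma f}]\bigr)^{\theta}\bigl(\mathbb{E}_{P}[(q/p)^{r}]\bigr)^{1-\theta},
\]
with ``$\le$'' exactly when $x\mapsto x^{\theta}$ is concave, i.e. when $\theta\in(0,1)$, i.e. (given $\beta<\gamma$) when $\beta>0$, and ``$\ge$'' in all other cases, i.e. when $\beta<0$. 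Taking logarithms, using $\log\mathbb{E}_{P}[(q/p)^{r}]=r(r-1)\,\mathcal{R}_{r}(Q\,\|\,P)$ from \eqref{Renyi:def:alt}, and collapsing the constants via the relations recorded above yields
\[
\log\mathbb{E}_{Q}[e^{\beta f}]\;\lessgtr\;\frac{\beta}{\gamma}\log\mathbb{E}_{P}[e^{\gamma f}]+\frac{\beta}{\gamma-\beta}\,\mathcal{R}_{r}(Q\,\|\,P).
\]
Dividing by $\beta$ reverses the inequality precisely when $\beta<0$, that is, precisely in the ``$\ge$'' case, so in every sign configuration one arrives at
\[
\frac{1}{\beta}\log\mathbb{E}_{Q}[e^{\beta f}]\;\le\;\frac{1}{\gamma}\log\mathbb{E}_{P}[e^{\gamma f}]+\frac{1}{\gamma-\beta}\,\mathcal{R}_{r}(Q\,\|\,P)\qquad\text{for all }P.
\]

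\emph{Attainment and the supremum formula.} Equality in Jensen's inequality forces $e^{\gamma f}\propto(q/p)^{r}$ $P$-a.s., i.e. $q/p\propto e^{(\gamma-\beta)f}$, which singles out the candidate minimizer $P^{*}$ with $dP^{*}/dQ=e^{(\beta-\gamma)f}/\mathbb{E}_{Q}[e^{(\beta-\gamma)f}]$ --- a bona fide probability measure equivalent to $Q$ since $f$ is bounded. Substituting $P=P^{*}$ into the right-hand side of the last display, the normalizing constant cancels and a short computation using $r(\gamma-\beta)=\gamma$, $\tfrac{1}{\gamma-\beta}\cdot\tfrac1r=\tfrac1\gamma$, and $\tfrac1\gamma+\tfrac{\gamma-\beta}{\gamma\beta}=\tfrac1\beta$ shows the value collapses to $\tfrac1\beta\log\mathbb{E}_{Q}[e^{\beta f}]$; combined with the previous paragraph this proves \eqref{risk:sens:inf}, the infimum being attained at $P^{*}$. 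For \eqref{risk:sens:sup}, note that the displayed inequality holds for \emph{every} ordered pair of probability measures; interchanging the names of $P$ and $Q$ and rearranging gives $\tfrac1\beta\log\mathbb{E}_{P}[e^{\beta f}]-\tfrac1{\gamma-\beta}\mathcal{R}_{r}(P\,\|\,Q)\le\tfrac1\gamma\log\mathbb{E}_{Q}[e^{\gamma f}]$ for all $P$, so the supremum over $P$ is $\le\tfrac1\gamma\log\mathbb{E}_{Q}[e^{\gamma f}]$; and the relabeled version of the attainment argument shows equality is achieved by the ``inverse tilt'' $P^{**}$ with $dP^{**}/dQ=e^{(\gamma-\beta)f}/\mathbb{E}_{Q}[e^{(\gamma-\beta)f}]$, by the same direct substitution.

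\emph{Main obstacle.} The hard part will be bookkeeping rather than depth: the direction of the generalized H\"older inequality and the direction of the division by $\beta$ each depend on the sign pattern of $(\beta,\gamma)$, and the proof works only because these two reversals always coincide --- that is, $\theta=\beta/\gamma\in(0,1)$ if and only if $\beta>0$, given $\beta<\gamma$. The secondary (routine but necessary) point is the measure-theoretic fine print --- the conventions for $\mathcal{R}_{\alpha}$ at negative and in-$(0,1)$ orders, and the treatment of $P$ not mutually absolutely continuous with $Q$ or with $\mathbb{E}_{P}[(q/p)^{r}]$ degenerate --- which is what legitimizes the unrestricted quantifier ``$P\in\mathcal{P}(\mathcal{X})$'' in the two representations.
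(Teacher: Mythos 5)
Your argument is essentially correct, but there is nothing in the paper to compare it against: the authors do not prove Theorem \ref{thm:renyi:dupuis}, they import it (citing Atar, Chowdhary and Dupuis) and only use it downstream, e.g.\ to get \eqref{risk:sens:bound}. Your self-contained derivation — the pointwise factorization $\tfrac{q}{p}e^{\beta f}=(e^{\gamma f})^{\theta}\bigl((q/p)^{r}\bigr)^{1-\theta}$ with $\theta=\beta/\gamma$, $r=\gamma/(\gamma-\beta)$, Jensen against the tilted measure $\propto (q/p)^{r}dP$, and the observation that the Jensen reversal (at $\theta\notin(0,1)$, i.e.\ $\beta<0$) is exactly cancelled by the sign flip when dividing by $\beta$ — checks out, as does the attainment computation at $dP^{*}/dQ\propto e^{(\beta-\gamma)f}$ and the derivation of \eqref{risk:sens:sup} by swapping the roles of the two measures. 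This buys the paper something it currently lacks, namely a proof of its central tool from first principles, with the optimizer exhibited explicitly (which also explains why the exponential families $P_{\alpha}$ of \eqref{uq:cgf:tilted} appear later). Note also that you correctly repaired the typo in the statement, which says ``let $P\in\mathcal{P}(\mathcal{X})$'' where it must mean $Q$.

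One piece of the fine print deserves more care than your dismissal suggests. In the regime $\beta<0<\gamma$ one has $r=\gamma/(\gamma-\beta)\in(0,1)$, and under the paper's definition \eqref{Renyi:def} the divergence $\mathcal{R}_{r}(Q\,\|\,P)$ is computed as an integral over $\{pq>0\}$ and can be \emph{finite} even when $P$ and $Q$ are not mutually absolutely continuous (only mutual singularity forces $+\infty$). So such $P$ cannot simply be discarded from the infimum; you must verify the inequality for them. This is not a real obstruction — running your Jensen argument on $\{pq>0\}$ with the measure $\propto (q/p)^{r}\chi_{\{pq>0\}}\,dP$, and using $\mathbb{E}_{Q}[e^{\beta f}]\ge\int_{pq>0}e^{\beta f}\,dQ$ together with the fact that $x\mapsto x^{\theta}$ is decreasing for $\theta<0$, still yields $\mathbb{E}_{Q}[e^{\beta f}]\ge Z^{1-\theta}(\mathbb{E}_{P}[e^{\gamma f}])^{\theta}$ and hence the desired bound after dividing by $\beta<0$ — but the step belongs in the proof rather than in the ``does not affect the infimum'' bin. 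With that patch the argument is complete in all three sign regimes.
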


\begin{remark}The  variational representation formula
\eqref{risk:sens:sup}
is a generalization of the Donsker-Varadhan variational formula involving relative entropy (also know as the Gibbs variational principle) \cite{Dupuis:97,Simon:93}. Indeed taking  $\gamma=1$
and  $\beta \to 0$ in \eqref{risk:sens:sup} we 
obtain  the well-known formula
\begin{equation}
\log\mathbb{E}_{Q}[e^{f}]=\sup_{P\in\mathcal{P}%
(\mathcal{X})}\left\{ \mathbb{E}_{P}[f] - \mathcal{R}\left(  {P}{\,||\,}{Q}\right)
\right\}  \ . \label{donsker:varadhan}%
\end{equation}
Note that the variational 
formula \eqref{donsker:varadhan} 
serves as the basis of the UQ
theory for typical events developed in \cite{Chowdhary:13,Dupuis:16,Katsoulakis:17,Gourgoulias:17}. 

\end{remark}


\section{UQ Bounds for Rare Events}

\label{sec:UQ:bounds}

Let $P,Q\in\mathcal{P}(\mathcal{X})$ and $g:\mathcal{X}\to\mathbb{R}$ be a
measurable function. It is convenient to think of $Q$ as the ``true''
probabilistic model and of 
$P$ as a ``nominal'' or ``reference'' model. 
By Theorem \ref{thm:renyi:dupuis}, equation  \eqref{risk:sens:inf},
we have the upper bound 
\begin{equation}
\frac{1}{\beta} \log\mathbb{E}_{Q} \big[e^{\beta g}\big] \le\frac{1}{\gamma}
\log\mathbb{E}_{P} \big[e^{\gamma g}\big] + \frac{1}{\gamma-\beta}
\mathcal{R}_{\frac{\gamma}{\gamma-\beta}}\left( {Q}{\,||\,}{P}\right)\,,\label{risk:sens:bound}%
\end{equation}
which constitutes an upper bound for risk-sensitive observables (i.e., where tail
events matter). The upper bound consists of two terms; one being an estimate
of the risk-sensitive observable under the ``nominal'' model and the second
term being the cost to be paid for the substitution of the ``true'' model,
here, quantified by the R\'{e}nyi divergence.


By taking the limit $\beta\to0$, it formally holds that
\begin{equation}
\mathbb{E}_{Q} [ g] \le\frac{1}{\gamma} \log\mathbb{E}_{P} \big[e^{\gamma
g}\big] + \frac{1}{\gamma} \mathcal{R}\left( {Q}{\,||\,}{P}\right)\,,\label{typ:bound}%
\end{equation}
which is an upper bound for typical (i.e., not risk-sensitive) observables.
This upper bound \eqref{typ:bound} was
the starting point for deriving UQ and sensitivity
bounds for typical observables in \cite{Chowdhary:13, Li:12,Dupuis:16,Katsoulakis:17,Gourgoulias:17}. 
Here we derive respective bounds for log-probabilities of rare
events, which form a particular class of risk-sensitive observables.
In other words, we are interested in bounding quantities of the form $\log Q(A) - \log P(A)$.
The following theorem summarizes the UQ bounds for rare events.

\begin{theorem}
\label{UQ:bounds:lemma} (a) Fix $P,Q\in\mathcal{P}(\mathcal{X})$ and let
$A\in\mathcal{B}(\mathcal{X})$ be such that $P(A)>0$ and $Q(A)>0$. Then
\begin{equation}
\sup_{\alpha>0} \left\{  -(\alpha+1)\mathcal{R}_{\alpha+1}\left( {P}%
{\,||\,}{Q}\right)  + \frac{1}{\alpha} \log P(A) \right\}  \le\log Q(A) - \log
P(A) \le\inf_{\alpha>0} \left\{  \alpha\mathcal{R}_{\alpha+1}\left(
{Q}{\,||\,}{P}\right)  - \frac{1}{\alpha+1} \log P(A) \right\}  \,.
\label{opt:rare:event:bound:a}%
\end{equation}

(b) If $P$ and $Q$ are mutually absolutely continuous, then
(\ref{opt:rare:event:bound:a}) can be rewritten as {\small
\begin{equation}
- \inf_{\alpha>0} \left\{  \frac{\log\mathbb{E}_{P}\left[ \left(  \frac{d Q}{d
P}\right) ^{-\alpha}\right]  -\log P(A)}{\alpha} \right\}  \le\log Q(A) - \log
P(A) \le\inf_{\alpha>1} \left\{  \frac{\log\mathbb{E}_{P}\left[  \left(
\frac{d Q}{d P}\right) ^{\alpha}\right] -\log P(A)}{\alpha} \right\}
\ .\label{opt:rare:event:bound2:a}%
\end{equation}
}
\end{theorem}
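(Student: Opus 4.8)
The plan is to derive both inequalities in part~(a) from the variational bound \eqref{risk:sens:bound} by feeding it test functions that concentrate all of their exponential weight on the event $A$, and then to obtain part~(b) from part~(a) by substituting the moment formula \eqref{Renyi:def:alt} for the R\'enyi divergence. (Equivalently, the estimates in~(a) are exactly what H\"older's inequality applied to $Q(A)=\int\chi_A\,(dQ/dP)\,dP$ yields, with $\alpha+1$ playing the role of the H\"older exponent; I will phrase things through \eqref{risk:sens:bound} to stay within the framework of the paper.)

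For the upper bound in~(a) I would apply \eqref{risk:sens:bound} to the bounded function $g=c\chi_A$ with $c>0$ and fixed exponents $0<\beta<\gamma$. Since $\mathbb{E}_Q[e^{\beta c\chi_A}]=Q(A^c)+Q(A)e^{\beta c}$, one has $\frac{1}{\beta}\log\mathbb{E}_Q[e^{\beta c\chi_A}]=c+\frac{1}{\beta}\log Q(A)+O(e^{-\beta c})$ as $c\to\infty$, and likewise $\frac{1}{\gamma}\log\mathbb{E}_P[e^{\gamma c\chi_A}]=c+\frac{1}{\gamma}\log P(A)+O(e^{-\gamma c})$, while the R\'enyi term in \eqref{risk:sens:bound} does not involve $c$. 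Letting $c\to\infty$ the diverging terms $c$ cancel and there remains $\frac{1}{\beta}\log Q(A)\le\frac{1}{\gamma}\log P(A)+\frac{1}{\gamma-\beta}\mathcal{R}_{\gamma/(\gamma-\beta)}(Q\|P)$. Setting $\alpha+1=\gamma/(\gamma-\beta)$, equivalently $\beta=\tfrac{\alpha}{\alpha+1}\gamma$ (so that $\alpha>0$ corresponds precisely to $0<\beta<\gamma$, and the free scale $\gamma$ drops out once denominators are cleared), a short computation rewrites this as $\log Q(A)-\log P(A)\le\alpha\mathcal{R}_{\alpha+1}(Q\|P)-\tfrac{1}{\alpha+1}\log P(A)$, and taking the infimum over $\alpha>0$ gives the claimed upper bound. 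If $Q\not\ll P$ the right-hand side is $+\infty$ and there is nothing to prove.

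For the lower bound in~(a) I would apply the upper bound just obtained with $P$ and $Q$ interchanged — which is allowed since \eqref{risk:sens:bound}, being a consequence of Theorem~\ref{thm:renyi:dupuis}, holds for any ordered pair of measures — to get $\log P(A)-\log Q(A)\le\alpha\mathcal{R}_{\alpha+1}(P\|Q)-\tfrac{1}{\alpha+1}\log Q(A)$ for every $\alpha>0$. Collecting the $\log Q(A)$ terms on the left and solving yields $\log Q(A)-\log P(A)\ge\tfrac{1}{\alpha}\log P(A)-(\alpha+1)\mathcal{R}_{\alpha+1}(P\|Q)$, and the supremum over $\alpha>0$ is the asserted lower bound. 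The same inequality also falls out in one step from \eqref{risk:sens:sup} with the identical $g=c\chi_A$, $c\to\infty$ argument.

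Finally, part~(b) is bookkeeping under mutual absolute continuity. From \eqref{Renyi:def:alt} we have $\alpha\mathcal{R}_{\alpha+1}(Q\|P)=\tfrac{1}{\alpha+1}\log\mathbb{E}_P[(dQ/dP)^{\alpha+1}]$, so the substitution $\alpha\mapsto\alpha+1$ turns the upper bound of~(a) into the stated expression with $\inf_{\alpha>1}$. For the lower bound, the symmetry $\mathcal{R}_{\alpha+1}(P\|Q)=\mathcal{R}_{-\alpha}(Q\|P)$ together with the negative-order form of \eqref{Renyi:def:alt} gives $(\alpha+1)\mathcal{R}_{\alpha+1}(P\|Q)=\tfrac{1}{\alpha}\log\mathbb{E}_P[(dQ/dP)^{-\alpha}]$, which recasts the lower bound of~(a) as the stated $-\inf_{\alpha>0}\{\cdots\}$ expression. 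I expect the only step needing genuine care is the passage $c\to\infty$ — verifying that the $O(e^{-\beta c})$ and $O(e^{-\gamma c})$ remainders vanish (they do, because $Q(A)>0$ and $P(A)>0$) — together with keeping the reparametrization and the R\'enyi symmetry algebra straight; there is no deeper analytic obstacle.
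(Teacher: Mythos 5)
Your proof is correct and follows essentially the same route as the paper's: both derive (a) by feeding the variational bound \eqref{risk:sens:bound} a bounded test function that concentrates on $A$ and passing to the limit (your $g=c\chi_A$, $c\to\infty$ is equivalent after a constant shift to the paper's $g=-M\chi_{A^c}$, $M\to\infty$), with the paper simply fixing $\beta=\alpha$, $\gamma=\alpha+1$ at the outset rather than reparametrizing afterward, and both obtain the lower bound by swapping $P$ and $Q$ and part (b) by substituting \eqref{Renyi:def:alt}. No gaps.
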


\begin{proof}
(a) We first prove the upper bound. By setting $\b=\a$ and $\g=\a+1$ in \VIZ{risk:sens:bound}, we get
\begin{equation*}
\frac{1}{\alpha} \log\mathbb{E}_{Q}[e^{\alpha g}] \leq
\frac{1}{\alpha+1}\log\mathbb{E}_{P}[e^{(\alpha+1)g}] + \RENYI{\alpha+1}{Q}{P} \ .
\end{equation*}
By taking $\alpha >0$ and  considering $g=0$ on $A$, $g=-M$ on $A^c$ and then sending $M\to\infty$, the above inequality
becomes
\begin{equation*}
\frac{1}{\alpha} \log Q(A) \leq
\frac{1}{\alpha+1}\log P(A) + \RENYI{\alpha+1}{Q}{P} \ .
\end{equation*}
Multiplying with $\a$ and subtracting $\log P(A)$, we have
\begin{equation*}
\log Q(A) - \log P(A) \leq
\a\RENYI{\alpha+1}{Q}{P} - \frac{1}{\alpha+1}\log P(A) \ .
\end{equation*}
Since this holds for all $\a>0$, the upper bound in \VIZ{opt:rare:event:bound:a} is proved.
For the lower bound, we reverse $Q$ and $P$ in \VIZ{risk:sens:bound} and proceed
as in the upper bound.

\noindent
(b) Substituting the R\'{e}nyi formula \VIZ{Renyi:def:alt} in \VIZ{opt:rare:event:bound:a}, we get{\small
\begin{equation*}
\sup_{\alpha>0} \left\{  -\frac{1}{\alpha} \log\mathbb{E}_{Q}\left[\left(  \frac{d P}{d Q}\right)^{\a+1}\right]  + \frac{1}{\a}\log P(A) \right\}
\le \log Q(A) - \log P(A) \le
\inf_{\alpha>0} \left\{  \frac{1}{\a+1}\log\mathbb{E}_{P}\left[  \left(  \frac{d Q}{d P}\right)^{\a+1}\right] - \frac{1}{\a+1} \log P(A) \right\}  \ .
\end{equation*}}
Equivalently, {\small
\begin{equation*}
\sup_{\alpha>0} \left\{  -\frac{1}{\alpha} \log\mathbb{E}_{P}\left[\left(  \frac{d P}{d Q}\right)^{\a}\right]  + \frac{1}{\a}\log P(A) \right\}
\le \log Q(A) - \log P(A) \le
\inf_{\alpha>1} \left\{  \frac{1}{\a}\log\mathbb{E}_{P}\left[  \left(  \frac{d Q}{d P}\right)^{\a}\right] - \frac{1}{\a} \log P(A) \right\}  \ .
\end{equation*}}
which is exactly \VIZ{opt:rare:event:bound2:a}.
\end{proof}

We turn next to determining the optimal $\alpha$ in
\eqref{opt:rare:event:bound2:a} using the results from Section
\ref{sec:opt:problem}. We select the function $g=\log\frac{dQ}{dP}%
\in\mathcal{E}$, whose cumulant generating function is
\begin{equation}
H(\alpha)=\log\mathbb{E}_{P}[e^{\alpha\log\frac{dQ}{dP}}]\ .\label{uq:cgf:def}%
\end{equation}
As is readily apparent from the bound
\eqref{opt:rare:event:bound2:a}, in order to obtain non-trivial upper and lower bounds we should assume 
$H(\alpha)$ is finite in an open neighborhood of the interval $[0,1]$. If we assume this  
then the function $H(\alpha)$ has the following elementary properties:

\begin{enumerate}
\item $H(0)=H(1)=0$.

\item $H^{\prime}(0)=-\mathcal{R}\left(  {P}{\,||\,}{Q}\right)  $, $H^{\prime
}(1)=\mathcal{R}\left(  {Q}{\,||\,}{P}\right)  $.

\item More generally for any $\alpha$ we have
\begin{equation}
\label{uq:cgf:deriv}H^{\prime}(\alpha) \,=\, \mathbb{E}_{P_{\alpha}} \left[
\log\frac{d Q}{d P}\right]  = \mathcal{R}\left( {P_{\alpha}}{\,||\,}%
{P}\right)  - \mathcal{R}\left( {P_{\alpha}}{\,||\,}{Q}\right) \ ,
\end{equation}
where $P_{\alpha}$ is the exponential family given by
\begin{equation}
\label{uq:cgf:tilted}\frac{dP_{\alpha}}{dR} \,=\, \frac{ q^{\alpha}%
p^{1-\alpha}}{ \int q^{\alpha}p^{1-\alpha} dR} \,.
\end{equation}
The family $P_{\alpha}$ interpolates between $P$ and $Q$ since $P_{0}=P$ and
$P_{1}=Q$.

%
%
%

\end{enumerate}

\noindent
Using these properties, as well as  Proposition \ref{aux:bound:func:gen:prop} we obtain the
following, explicit UQ bounds for all rare events $A$ such that $-\log P(A)=M$.

{

\begin{theorem}[{UQ bounds for rare events}]\label{thm:uq:rare}
Let $P,Q\in\mathcal{P}(\mathcal{X})$ be mutually absolutely continuous and assume that $H(\a)$ given in \eqref{uq:cgf:def}
is finite for $\alpha$ in a neighborhood of $[0,1]$.   Let $M_{\pm}$,  $0<M_{\pm}\leq\infty$,  be the constants 
given in Proposition \ref{aux:bound:func:gen:prop} (with $g=\log\frac{dQ}{dP}$).   For any  $A \in\mathcal{B}(\mathcal{X})$ with  
$P(A) = e^{-M}>0$ with $M \le M_\pm$ we have 
\begin{equation}
-\mathcal{R}\left(  {P_{-\alpha_{-}}}{\,||\,}{Q}\right)  \,\leq\,\log
Q(A)\,\leq\,  \left\{ 
		            \begin{array}{cl}  
                                           0 & \textrm{ if } M <  \mathcal{R}(Q\,||\,P) \\          
                                         -  \mathcal{R}\left(  {P_{\alpha_{+}}}{\,||\,}{Q}\right) &  \textrm{ if } M \ge   \mathcal{R}(Q\,||\,P)
                                \end{array}
                    \, \right.
              \label{UQ:bound:rep2}.      
\end{equation}
where $\alpha_{\pm}=\alpha_\pm(M)$ are the (unique) solutions of
\begin{equation}
\mathcal{R}\left(  {P_{\pm}\alpha_{\pm}}{\,||\,}{P}\right)  =M=-\log
P(A)\, .\label{UQ;bound:rep2}%
\end{equation}
\end{theorem}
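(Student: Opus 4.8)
The plan is to specialize the optimization results of Proposition~\ref{aux:bound:func:gen:prop} to the function $g=\log\frac{dQ}{dP}$ and then translate the abstract bounds \eqref{opt:rare:event:bound2:a} into relative-entropy form using the identity \eqref{uq:cgf:deriv}. First I would observe that, under the hypothesis that $H(\alpha)$ is finite in a neighborhood of $[0,1]$, the listed properties $H(0)=H(1)=0$ and $H'(1)=\mathcal{R}(Q\,||\,P)$ hold. The upper bound in \eqref{opt:rare:event:bound2:a} is $\inf_{\alpha>1}\{(H(\alpha)-\log P(A))/\alpha\}$; shifting the index via $\alpha\mapsto\alpha+1$ turns this into $\inf_{\alpha>0}\{(H(\alpha+1)+M)/\alpha\}$ with $M=-\log P(A)$. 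Because $H(1)=0$, the function $\tilde H(\alpha):=H(\alpha+1)$ satisfies $\tilde H(0)=0$, $\tilde H'(0)=H'(1)=\mathcal{R}(Q\,||\,P)$, so Proposition~\ref{aux:bound:func:gen:prop} applies to $\tilde H$ after accounting for the (possibly nonzero) slope at the origin.

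The next step is to handle the two regimes in the upper bound. When $M<\mathcal{R}(Q\,||\,P)=\tilde H'(0)$, the infimum of $(\tilde H(\alpha)+M)/\alpha$ over $\alpha>0$ is attained in the limit $\alpha\to 0^+$, giving value $\tilde H'(0)+M$... but wait — one must be careful: here the relevant quantity is $\log Q(A)-\log P(A)\le \inf_{\alpha>1}\{\dots\}$, and the infimum over $\alpha>1$ of $(H(\alpha)-\log P(A))/\alpha$. Since $H(1)=0$ and $-\log P(A)=M>0$, at $\alpha=1$ the bracketed expression equals $M=-\log P(A)$, so the bound gives $\log Q(A)\le 0$, the trivial bound, precisely when the minimizer of the shifted problem sits at the left endpoint, i.e. when $M<\mathcal{R}(Q\,||\,P)$. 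When $M\ge\mathcal{R}(Q\,||\,P)$ the minimizer $\alpha_+>0$ of the shifted problem (equivalently the minimizer $\alpha_+ + 1>1$ of the original) is interior, and by part~2 of Proposition~\ref{aux:bound:func:gen:prop} the optimal value equals $H'(\alpha_++1)=\mathbb{E}_{P_{\alpha_++1}}[\log\frac{dQ}{dP}]=\mathcal{R}(P_{\alpha_+}\,||\,P)-\mathcal{R}(P_{\alpha_+}\,||\,Q)$ by \eqref{uq:cgf:deriv}; here I am re-indexing the exponential family $P_\alpha$ of \eqref{uq:cgf:tilted} to match the notation of the theorem statement, so that $\mathcal{R}(P_{\alpha_+}\,||\,P)=M$ and the optimal value becomes $M-\mathcal{R}(P_{\alpha_+}\,||\,Q)$, whence $\log Q(A)\le -\mathcal{R}(P_{\alpha_+}\,||\,Q)$.

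For the lower bound I would argue symmetrically, starting from the left-hand side of \eqref{opt:rare:event:bound2:a}, namely $-\inf_{\alpha>0}\{(\log\mathbb{E}_P[(dP/dQ)^\alpha]-\log P(A))/\alpha\}$. Writing $\log\mathbb{E}_P[(dP/dQ)^\alpha]=\log\mathbb{E}_P[e^{-\alpha g}]=H(-\alpha)$, this is $-\inf_{\alpha>0}\{(H(-\alpha)+M)/\alpha\}$, which is exactly the ``$-$'' branch of Proposition~\ref{aux:bound:func:gen:prop}. Its optimal value is $-H'(-\alpha_-)$ where $\alpha_-=\alpha_-(M)$ solves $\mathcal{R}(P_{-\alpha_-}\,||\,P)=M$ by \eqref{eq:cond:re}; applying \eqref{uq:cgf:deriv} again, $-H'(-\alpha_-)=-(\mathcal{R}(P_{-\alpha_-}\,||\,P)-\mathcal{R}(P_{-\alpha_-}\,||\,Q))=-(M-\mathcal{R}(P_{-\alpha_-}\,||\,Q))$, and since here $-\log P(A)=M$ cancels the $+M$ we arrive at $\log Q(A)\ge -\mathcal{R}(P_{-\alpha_-}\,||\,Q)$. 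Note that on the lower-bound side there is no trivial-regime threshold because $H(0)=0$ is the left endpoint and $H'(0)=-\mathcal{R}(P\,||\,Q)<0$, so the minimizer is always interior for every $M>0$ up to $M_-$; this is why the lower bound has a single case.

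The main obstacle I anticipate is bookkeeping rather than conceptual: one must carefully track the index shift $\alpha\leftrightarrow\alpha+1$ between the original constraint sets $\{\alpha>1\}$ and $\{\alpha>0\}$ and simultaneously track how the exponential family $P_\alpha$ of \eqref{uq:cgf:tilted} is reparametrized so that the constraint $\mathcal{R}(P_{\pm\alpha_\pm}\,||\,P)=M$ in the theorem statement is consistent with the optimality condition \eqref{eq:cond:re} applied to $g=\log\frac{dQ}{dP}$. One also needs to confirm that the hypothesis ``$H$ finite near $[0,1]$'' guarantees $g\in\mathcal{E}$ and that $g$ is not $P$-a.s.\ constant (otherwise $P=Q$ and everything is trivial), and to check that the threshold $M\le M_\pm$ is exactly the condition ensuring the minimizer stays strictly inside $(d_-,d_+)$ so that part~2 of Proposition~\ref{aux:bound:func:gen:prop} — rather than part~3 — is the one in force. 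Finally, the boundary case $M=\mathcal{R}(Q\,||\,P)$ should be verified to be consistent between the two cases (there $\alpha_+=0$, $P_{\alpha_+}=Q$, and $\mathcal{R}(P_{\alpha_+}\,||\,Q)=0$, matching the trivial bound $\log Q(A)\le 0$), which makes the piecewise definition continuous.
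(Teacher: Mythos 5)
Your overall strategy is the same as the paper's: start from the two sides of \eqref{opt:rare:event:bound2:a}, recognize them as the optimization problems of Proposition \ref{aux:bound:func:gen:prop} for $g=\log\frac{dQ}{dP}$, and convert the optimal values to relative entropies via \eqref{uq:cgf:deriv}. Your treatment of the lower bound and your identification of $M=\mathcal{R}(Q\,||\,P)$ as the threshold separating the trivial from the informative upper bound are both correct and match the paper.

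However, there is one step that fails as written: the change of variables $\alpha\mapsto\alpha+1$. Substituting $\alpha=\beta+1$ into $\inf_{\alpha>1}\left\{(H(\alpha)+M)/\alpha\right\}$ gives $\inf_{\beta>0}\left\{(H(\beta+1)+M)/(\beta+1)\right\}$, \emph{not} $\inf_{\beta>0}\left\{(\tilde H(\beta)+M)/\beta\right\}$ with $\tilde H(\beta)=H(\beta+1)$ --- the denominator shifts too. Consequently Proposition \ref{aux:bound:func:gen:prop} does not apply to $\tilde H$ in the form you invoke, the asserted equivalence ``minimizer of the shifted problem $=$ minimizer of the original minus $1$'' is false (the two stationarity conditions differ by a term $H'(\beta+1)$), and the re-indexing of the exponential family propagates this confusion: at the boundary $M=\mathcal{R}(Q\,||\,P)$ you write $\alpha_+=0$ and $P_{\alpha_+}=Q$, which contradicts $P_0=P$ in the convention \eqref{uq:cgf:tilted}; the correct statement is $\alpha_+=1$ and $P_1=Q$. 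The repair is to drop the shift and argue in the original variable, as the paper does: $B'(\alpha;M)=(\alpha H'(\alpha)-H(\alpha)-M)/\alpha^2$, and $\alpha H'(\alpha)-H(\alpha)$ is strictly increasing with value $H'(1)=\mathcal{R}(Q\,||\,P)$ at $\alpha=1$. Hence for $M<\mathcal{R}(Q\,||\,P)$ the map $B(\cdot\,;M)$ is strictly increasing on $(1,\infty)$ and the constrained infimum is $B(1;M)=M$ (the trivial bound), while for $M\ge\mathcal{R}(Q\,||\,P)$ the unique root $\alpha_+$ of $\alpha H'(\alpha)-H(\alpha)=M$, equivalently $\mathcal{R}(P_{\alpha_+}\,||\,P)=M$, lies in $[1,d_+)$, so the constrained and unconstrained infima coincide and Proposition \ref{aux:bound:func:gen:prop} part 2 yields the value $H'(\alpha_+)=M-\mathcal{R}(P_{\alpha_+}\,||\,Q)$. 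Your first-regime endpoint argument already contains the right idea; carrying it through in the original variable fixes the second regime and the continuity check as well.
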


\begin{proof}  With $H(\alpha)$ given in \VIZ{uq:cgf:def} and setting $M=-\log P(A)$, part (b) of Theorem~\ref{UQ:bounds:lemma} is
rewritten as
\begin{equation}\label{eq:UQbound:final}
- \inf_{\a>0} \left\{ \frac{H(-\a)+M}{\a} \right\} \le \log Q(A) + M \le \inf_{\a>1}\left\{\frac{H(\a)+M}{\a}\right\}\,.
\end{equation}
The lower bound is then an immediate consequence of Proposition \ref{aux:bound:func:gen:prop}  together with \eqref{uq:cgf:deriv}
and \eqref{UQ;bound:rep2}. 

However, the upper bound involves a modified  calculation since the infimum is taken only over $\alpha>1$.  We first note that the corresponding minimization of the quantity 
\begin{equation}
B(\alpha;M)=\frac{H(\alpha)+M}{\alpha}\, ,
\end{equation}
arising also in  the proof of Proposition \ref{aux:bound:func:gen:prop}, e.g. \eqref{eq:B:Appendix}, separates into two distinct cases. Indeed,   for any $\alpha >1$ we have
\begin{equation}\label{eq:UQbound:final:0}
B^{\prime}(\alpha;M)=\frac{\alpha H^{\prime}(\alpha)-H(\alpha)-M}{\alpha^{2}}
> \frac{1 \cdot  H^{\prime}(1)-H(1)-M}{\alpha^{2}}=\frac{\mathcal{R}\left(  {Q}{\,||\,}{P}\right)-M}{\alpha^2}\, , 
\end{equation}
using the Properties 1-3 above for $H(\alpha)$ and the fact that $\alpha H^{\prime}(\alpha)-H(\alpha)$ is a strictly increasing function of $\a$ in $[0,\infty)$.

First, \eqref{eq:UQbound:final:0} implies that  if 
\begin{equation}
\label{eq:UQbound:threshold}
\mathcal{R}\left(  {Q}{\,||\,}{P}\right) > M=-\log P(A)\, ,
\end{equation}
 then $B(\alpha;M)$ is strictly increasing in $\alpha$, hence the infimum on the upper bound of \eqref{eq:UQbound:final} occurs at $\alpha=1$. In this case,  using that $H(1)=0$, we obtain that 
\begin{equation}\label{eq:UQbound:final:1}
\inf_{\a>1}\left\{\frac{H(\a)+M}{\a}\right\}=M\, .
\end{equation}

On the other hand,  if we have
\begin{equation}
\label{eq:UQbound:threshold2}
\mathcal{R}\left(  {Q}{\,||\,}{P}\right) \le  M=-\log P(A)\, ,
\end{equation}
%
then $B(\alpha; M)$ has a unique minimum,  and
the minimizer (equivalently, the minimizer of the upper bound of \eqref{eq:UQbound:final}) occurs  at the unique finite root $\alpha_+=\alpha_+(M)>1$, namely 
\begin{equation}\label{eq:UQbound:final:2}
\inf_{\a>1}\left\{\frac{H(\a)+M}{\a}\right\}=H'(\alpha_+)=M-\mathcal{R}(P_{\alpha_+}\,||\,Q)\, .
\end{equation}
We now combine \eqref{eq:UQbound:final:1} and \eqref{eq:UQbound:final:2} with the upper bound of 
\eqref{eq:UQbound:final}
to obtain \eqref{UQ:bound:rep2}.

Note that the upper bound in  \eqref{UQ:bound:rep2} is not discontinuous in $M$ since for $M=\mathcal{R}(P\,||\,Q)$,  $\alpha_+=1$
and by Property 3 we have that  $P_1=Q$;  thus $\mathcal{R}(P_{\alpha_+}\,||\,Q)=\mathcal{R}(Q\,||\,Q)=0$.  
%

\end{proof}

\begin{remark} We obtain the trivial bound $\log Q(A)\le 0$ in Theorem \ref{thm:uq:rare}  when the true measure $Q$ 
is (relatively) too far  from the reference model $P$. 
%
This relative ``distance''  (see  \eqref{eq:UQbound:threshold} and \eqref{eq:UQbound:threshold2})  
is quantified by the ratio $\frac{\mathcal{R}(Q\,||\,P)}{-\log P(A)}$ which  is required to be less than  $1$ to obtain an 
informative upper bound in \eqref{UQ:bound:rep2}.
\end{remark}
}



\begin{remark}
We can interpret the condition for $\alpha_{\pm}$ in (\ref{UQ;bound:rep2}) by
noting that the measure $P$ conditioned on the rare event $A$, $P_{|A}$
satisfies  $\mathcal{R}\left(  {P_{|A}}{\,||\,}{P}\right)  =-\log P(A) \ .$ 
Theorem \ref{thm:uq:rare}  states that one should find the proper mixtures of $P$ and $Q$ (as
described by $P_{\pm\alpha_{\pm}}$) so that   $ \mathcal{R}\left(  {P_{\pm \alpha_\pm}} {\,||\,} {P}\right) = \mathcal{R}\left(  {P_{|A}}{\,||\,}{P}\right)$.   The bounds for $-\log Q(A)$ are then simply $\mathcal{R}\left(  {P_{\pm\alpha_\pm }}{\,||\,}{Q}\right)  $.
\end{remark}

Finally, we note that a much cruder UQ bound than
\eqref{opt:rare:event:bound:a} can be obtained by considering the upper lower bounds obtained by taking $\alpha=\infty$
in \eqref{opt:rare:event:bound:a}. Indeed, we can consider the alternative definition of R\'{e}nyi divergence \cite{Erven:14}
$$\mathcal{D}_\alpha(Q||P)=\alpha \mathcal{R}_{\alpha}(Q||P)\, ,$$
and accordingly rewrite \eqref{opt:rare:event:bound:a} for $\mathcal{D}_\alpha(Q||P)$.
Noting that
$$
\mathcal{D}_\infty(Q||P)=\sup_{x\in\mathcal{X}}{\log\frac{dQ}{dP}}\, ,
$$
also referred as worst-case regret, \cite{Erven:14},
we can bound \eqref{opt:rare:event:bound:a} from above and below by selecting $\alpha=\infty$.
This substitution obviously yields  a less sharp version of \eqref{opt:rare:event:bound:a}, namely the (trivial) bound
\begin{equation}
 \inf_{x\in\mathcal{X}}{\log\frac{dQ}{dP}}\,= -\mathcal{D}_{\infty}\left( {P}%
{\,||\,}{Q}\right)    \le\log Q(A) - \log
P(A) \le  \mathcal{D}_{\infty}\left(
{Q}{\,||\,}{P}\right)  =
\,\sup_{x\in\mathcal{X}}{\log\frac{dQ}{dP}}  \,,
\label{opt:rare:event:bound:infty}%
\end{equation}
Note that this bound is valid if $M_{\pm}<\infty$ and $M>M_{\pm}$, as long as $\log\frac{dQ}{dP}$ is bounded from above/below. 

\section{Sensitivity Indices for Rare Events}

\label{sec:Sens:bounds}

In this section we consider a parametric family of probability measures
$P^{\theta}\in\mathcal{P} (\mathcal{X})$ with $\theta\in\mathbb{R}^{K}$ and we
assume $P^{\theta}\ll R$, where $R$ is a reference measure in $P(\mathcal{X})$
and with density $p^{\theta} =\frac{dP^{\theta}}{dR}$. We further assume that
the mapping $\theta\mapsto p^{\theta}(x)$ is twice differentiable with respect
to $\theta$ for all $x\in\mathcal{X}$ together with a suitable integrability
condition on $\log p^{\theta}(x)$ to allow the interchange of integral and
derivatives. The sensitivity index for a rare event $A\in\mathcal{B}%
(\mathcal{X})$ (with $P(A)>0$) in the direction $v\in\mathbb{R}^{K}$ is then
given by
\begin{equation}
S_{v}^{\theta}(A):=\lim_{\epsilon\rightarrow0}\frac{\log P^{\theta+\epsilon
v}(A)-\log P^{\theta}(A)}{\epsilon} \,=\, \mathbb{E}_{P^{\theta}_{\vert A}} [
v^{T} W^{\theta}] \,=\, \frac{\mathbb{E}_{P^{\theta}}[\chi_{A}v^{T} W^{\theta
}]} {P(A)} \ ,\label{SI2}%
\end{equation}
where $W^{\theta}= \nabla_{\theta}\log p^{\theta}$ is the score of the
probability measure $P^{\theta}$, see also \eqref{LRR}.

Here, we derive computationally tractable bounds on the sensitivity indices $S_{v}^{\theta}(A)$ and corresponding new  rare event sensitivity indices $\mathcal{I}_{v,\pm}^{\theta}$, 
starting from the UQ bounds presented in Section \ref{sec:UQ:bounds}. 

By considering
the measures $Q=P^{\theta+\epsilon v}$ and $P=P^{\theta}$ we have that
$\log\frac{dP^{\theta+\epsilon v}}{dP^{\theta}} = O(\epsilon)$ and thus it is
natural to rescale the parameter $\alpha$ according to 
\begin{equation}
\alpha=\frac{\alpha_{0}}{\epsilon}\ .\label{alpha:expansion}%
\end{equation}
Taking $\epsilon\to0$ we obtain a non-trivial bound for the sensitivity
indices as the following Theorem  shows.
Next, in order to state our results we require the cumulant generating function
for the score function $W^{\theta}$ defined in  \eqref{score}:
\begin{equation*}\label{sec5:H:definition}
H_{v}^{\theta}(\alpha)=\log\mathbb{E}_{P^{\theta}}\left[  e^{\alpha
v^{T}W^{\theta}}\right]  \,.
\end{equation*}
This cumulant generating function has the following elementary properties:
\begin{enumerate}
\item $(H_{v}^{\theta})^{\prime}(0) = \mathbb{E}_{P^{\theta}}[ v^{T}
W^{\theta}] =0$.

\item $(H_{v}^{\theta})^{\prime\prime  }(0)= v^T\mathbb{E}_{P^{\theta}}[ W^{\theta
}(W^{\theta})^{T}] v = v^{T} \mathcal{F}(P^{\theta}) v$ where $\mathcal{F}%
(P^{\theta})$ denotes the Fisher information matrix for the parametric family
$P^{\theta}$.

\item More generally we have
\begin{equation*}
(H_{v}^{\theta})^{\prime}(\alpha)=\mathbb{E}_{P_{\alpha}^{\theta}}%
[v^{T}W^{\theta}]\label{sens:cgf:deriv} \,, %
\end{equation*}
where $P_{\alpha}^{\theta}$ is the exponential family with
\begin{equation*}
\frac{dP_{\alpha}^{\theta}}{dP^{\theta}}\,=\,e^{\alpha v^{T}W^{\theta}%
-H_{v}^{\theta}(\alpha)}\,.\label{sens::cgf:tilted}%
\end{equation*}

\end{enumerate}


\begin{theorem}\label{sec5:thm1}
Assume that the mapping $\theta\mapsto p^{\theta}(x)$ is $\mathcal{C}^{2}$ for
all $x\in\mathcal{X}$ and that for each $\alpha_{0}>0$ there exists $\delta>0$
such that
\[
\mathbb{E}_{P^{\theta}}\Big[e^{\alpha_{0}v^{T}\nabla_{\theta}\log p^{\theta
}+\alpha_{0}\frac{\delta}{2}\sup_{|\theta-\theta^{\prime}|<\delta}|v^{T}%
\nabla_{\theta}^{2}\log p^{\theta^{\prime}}v|}\Big]<\infty\ .
\]
Then
\begin{equation}
-\inf_{\alpha>0}\left\{  \frac{H_{v}^{\theta}(-\alpha)-\log P^{\theta}%
(A)}{\alpha}\right\}  \leq S_{v}^{\theta}(A)\leq\inf_{\alpha>0}\left\{
\frac{H_{v}^{\theta}(\alpha)-\log P^{\theta}(A)}{\alpha}\right\} \,.
\label{sens:bound:gen}%
\end{equation}

\end{theorem}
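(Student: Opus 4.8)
The plan is to derive the sensitivity bound \eqref{sens:bound:gen} as the $\epsilon \to 0$ limit of the UQ bound from Theorem \ref{UQ:bounds:lemma}(b), applied to the pair $Q = P^{\theta+\epsilon v}$ and $P = P^\theta$, with the rescaling $\alpha = \alpha_0/\epsilon$ as indicated in \eqref{alpha:expansion}. First I would fix a direction $v$ and, for $\epsilon$ small, write $\log Q(A) - \log P(A) = \log P^{\theta+\epsilon v}(A) - \log P^\theta(A)$, which divided by $\epsilon$ converges to $S_v^\theta(A)$ by definition \eqref{SI2}. The left- and right-hand sides of \eqref{opt:rare:event:bound2:a}, after dividing by $\epsilon$, involve terms of the form $\frac{1}{\epsilon \alpha}\left(\log \mathbb{E}_{P^\theta}\left[\left(\tfrac{dP^{\theta+\epsilon v}}{dP^\theta}\right)^{\pm\alpha}\right] - \log P^\theta(A)\right)$; substituting $\alpha = \alpha_0/\epsilon$ turns the prefactor into $\frac{1}{\alpha_0}$ and the exponent into $\pm \alpha_0 \cdot \tfrac{1}{\epsilon}\log\tfrac{dP^{\theta+\epsilon v}}{dP^\theta}$.

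The key analytic step is to show that, as $\epsilon \to 0$,
\begin{equation*}
\log \mathbb{E}_{P^\theta}\left[\exp\left\{\pm \alpha_0\, \tfrac{1}{\epsilon}\log\tfrac{dP^{\theta+\epsilon v}}{dP^\theta}\right\}\right] \longrightarrow \log \mathbb{E}_{P^\theta}\left[ e^{\pm\alpha_0 v^T W^\theta}\right] = H_v^\theta(\pm\alpha_0)\,,
\end{equation*}
and that this convergence is uniform enough to pass to the limit inside the $\inf$ over $\alpha_0$. Pointwise, $\tfrac{1}{\epsilon}\log\tfrac{dP^{\theta+\epsilon v}}{dP^\theta}(x) = \tfrac{1}{\epsilon}\left(\log p^{\theta+\epsilon v}(x) - \log p^\theta(x)\right) \to v^T \nabla_\theta \log p^\theta(x) = v^T W^\theta(x)$ by the $\mathcal{C}^2$ assumption; a second-order Taylor expansion in $\theta$ gives $\tfrac{1}{\epsilon}\log\tfrac{dP^{\theta+\epsilon v}}{dP^\theta}(x) = v^T W^\theta(x) + \tfrac{\epsilon}{2} v^T \nabla_\theta^2 \log p^{\theta'}(x) v$ for some $\theta'$ on the segment between $\theta$ and $\theta+\epsilon v$. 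This is exactly where the integrability hypothesis enters: the bound $\left|\tfrac{1}{\epsilon}\log\tfrac{dP^{\theta+\epsilon v}}{dP^\theta}\right| \le |v^T W^\theta| + \tfrac{\delta}{2}\sup_{|\theta-\theta'|<\delta}|v^T\nabla_\theta^2\log p^{\theta'}v|$ holds for $\epsilon < \delta$, so the hypothesis supplies an integrable exponential dominating function $e^{\alpha_0 |v^T W^\theta| + \alpha_0\frac{\delta}{2}\sup\ldots}$, and dominated convergence yields convergence of the moment generating functions for each fixed $\alpha_0 > 0$.

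To finish, I would handle the interchange of the limit $\epsilon\to 0$ with $\inf_{\alpha_0}$. For the upper bound in \eqref{opt:rare:event:bound2:a} the infimum is over $\alpha > 1$, i.e. over $\alpha_0 > \epsilon$; since $\epsilon \to 0$ this constraint relaxes to $\alpha_0 > 0$, and one uses that for each fixed $\alpha_0 > 0$ the $\epsilon$-dependent function $\tfrac{1}{\alpha_0}\left(\log\mathbb{E}_{P^\theta}[e^{\alpha_0 \cdot \frac{1}{\epsilon}\log(dP^{\theta+\epsilon v}/dP^\theta)}] - \log P^\theta(A)\right)$ converges to $\tfrac{H_v^\theta(\alpha_0) - \log P^\theta(A)}{\alpha_0}$; taking $\limsup$ of the $\inf$ from above (plug in a near-optimal fixed $\alpha_0$) and $\liminf$ from below (Fatou-type / lower semicontinuity of the $\inf$ under uniform-on-compacts convergence, using that $B(\alpha_0) \to \infty$ as $\alpha_0 \to 0^+$ and $\alpha_0 \to \infty$ so minimizers stay in a compact set) gives the desired limit $\inf_{\alpha_0 > 0}\tfrac{H_v^\theta(\alpha_0) - \log P^\theta(A)}{\alpha_0}$. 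The analogous argument with $-\alpha$ in place of $\alpha$ and the $\sup$ lower bound gives the left inequality. \textbf{The main obstacle} is precisely this interchange of limit and infimum: establishing enough uniformity (equi-coercivity of the family $B_\epsilon(\alpha_0)$ and locally uniform convergence) to conclude $\lim_\epsilon \inf_{\alpha_0} B_\epsilon = \inf_{\alpha_0}\lim_\epsilon B_\epsilon$, which is where the detailed cumulant-generating-function estimates from Appendix \ref{app:a} and the dominating hypothesis must be combined carefully rather than invoked routinely.
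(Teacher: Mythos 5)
Your proposal follows the paper's proof essentially exactly: apply Theorem \ref{UQ:bounds:lemma}(b) to $Q=P^{\theta+\epsilon v}$, $P=P^{\theta}$ with the rescaling $\alpha=\alpha_0/\epsilon$, and use the second-order Taylor expansion together with the stated integrability hypothesis to justify dominated convergence of the moment generating function for each fixed $\alpha_0$. The one point where you overcomplicate matters is what you flag as the ``main obstacle'': no interchange of $\lim_{\epsilon\to 0}$ with $\inf_{\alpha_0}$ is ever needed, because the bound holds for every fixed $\alpha_0>\epsilon$, so one passes to the limit $\epsilon\to 0$ for each fixed $\alpha_0>0$ and only afterwards infimizes over $\alpha_0$ --- the favorable direction of the inequality makes the equi-coercivity and locally uniform convergence arguments you sketch unnecessary, and this is precisely how the paper concludes.
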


\begin{proof}
Rewriting \VIZ{opt:rare:event:bound2:a} for $P^{\8+\e v}$ and $P^{\8}$ and substituting $\a$ from \VIZ{alpha:expansion},
we get the upper bound
\begin{equation*}
\log P^{\8+\e v}(A) - \log P^\8(A) \le
\left\{  \frac{\log\mathbb{E}_{P^\8}\left[  e^{\frac{\a_0}{\e} \left(\log p^{\8+\e v} - \log p^{\8} \right)}\right]-\log P^\8(A)}{\a_0/\e} \right\}  \ ,
\end{equation*}
valid for all $\a_0>\e$.
Dividing by $\e$, sending $\e\to0$, and then infimizing on $\alpha_0$, we get
\begin{equation*}
\lim_{\e\to0} \frac{1}{\e} \big(\log P^{\8+\e v}(A) - \log P^\8(A) \big) \le
\inf_{\a_0>0} \left\{  \frac{\lim_{\e\to0}\log\mathbb{E}_{P^\8}\left[  e^{\frac{\a_0}{\e}\left(\log p^{\8+\e v} - \log p^{\8} \right)}\right]-\log P^\8(A)}{\a_0} \right\}  \ .
\end{equation*}
In order to complete the proof of the upper bound we have to interchange between the limit and the integral.
This is justified by the dominated convergence theorem since we have  from Taylor's theorem that
\begin{equation*}
e^{\a_0 \frac{1}{\e} \left(\log p^{\8+\e v} - \log p^{\8} \right)}
= e^{\a_0 \left( v^T \nabla_\8 \log p^{\8} + \frac{\e}{2} v^T \nabla_\8^2 \log p^{\8'} v \right)}
\leq e^{\a_0 \left( v^T \nabla_\8 \log p^{\8} + \frac{\d}{2}  \sup_{|\8-\8'|<\d} |v^T\nabla_\8^2 \log p^{\8'} v | \right)}
\end{equation*}
for some $\8'$ in the interval defined by the points $\8$ and $\8+\e v$ and for $\e<\d$. Therefore,
\begin{equation*}
S_v^\8(A) \leq \inf_{\alpha_0>0} \frac{\log\mathbb{E}_{P^\8}\left[  e^{\a_0v^T \nabla_\8 \log p^{\8}}\right] -\log P^\8(A)}{\a_0}
\end{equation*}
which establishes the upper bound in \eqref{sens:bound:gen}.
The lower bound in \eqref{sens:bound:gen} is proved similarly.
\end{proof}

Using (\ref{eq:Bpmeqn}) and (\ref{eq:cond:re}) from Proposition
\ref{aux:bound:func:gen:prop} to evaluate the infimum in (\ref{sens:bound:gen}%
), we obtain a representation of the bounds.
%

\begin{theorem}[Sensitivity indices for rare events]
\label{thm:index}
\label{SB:bound:expl:thm}
Under the same assumptions as in Theorem \ref{sec5:thm1}
consider the family
\begin{equation*}
\mathcal{A}_{M}=\{ A\,:\, P^{\theta}(A) \ge e^{-M} \}
\end{equation*}
of all events $A$ which are less rare than a specified threshold $e^{-M}$. Then there exists $M_\pm$ such that for $M < M_\pm$ and any 
$A\in \mathcal{A}_{M}$ we have 
\begin{equation}\label{sens:bound:gen:0}
\mathcal{I}_{v,-}^{\theta}(M) \le
S_{v}^{\theta}(A)
\le
\mathcal{I}_{v,+}^{\theta}(M)\,,
\end{equation}
where 
\begin{equation}\label{SB:bound:expl}
\mathcal{I}_{v,\pm}^{\theta}(M):= \pm  \inf_{\alpha>0}\left\{
\frac{H_{v}^{\theta}(\pm \alpha)+M}{\alpha}\right\} 
= \mathbb{E}_{P^{\theta}_{\pm\alpha_\pm}}  \left[ v^{T}W^{\theta} \right] 
\end{equation}
and  $\alpha_\pm$ are determined by 
\begin{equation*}
\mathcal{R} ( { P^{\theta}_{\pm \alpha_{\pm}}}{\,||\,}{P^{\theta}}) =  M \,.
\end{equation*}
\end{theorem}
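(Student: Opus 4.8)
The plan is to combine Theorem~\ref{sec5:thm1} with Proposition~\ref{aux:bound:func:gen:prop}, applied to the function $g = v^T W^\theta$ and the measure $P = P^\theta$. First I would verify that $g \in \mathcal{E}$: the integrability hypothesis in Theorem~\ref{sec5:thm1} (with $\delta$ chosen for a given $\alpha_0$) guarantees that $\mathbb{E}_{P^\theta}[e^{\alpha_0 v^T W^\theta}] < \infty$ for every $\alpha_0 > 0$, and an analogous argument with $-v$ in place of $v$ (or the symmetric version of the hypothesis) gives finiteness for negative exponents; hence $H_v^\theta(\alpha) = \log \mathbb{E}_{P^\theta}[e^{\alpha v^T W^\theta}]$ is finite on an open interval around the origin, so Proposition~\ref{aux:bound:func:gen:prop} applies. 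We also record that $g$ is not $P^\theta$-a.s.\ constant in the nondegenerate case (otherwise the sensitivity index is trivially zero and the statement holds); this is what makes the minimizers unique.

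Next I would fix $A \in \mathcal{A}_M$, so that $P^\theta(A) \ge e^{-M}$, equivalently $-\log P^\theta(A) \le M$. Theorem~\ref{sec5:thm1} gives
\[
-\inf_{\alpha>0}\left\{\frac{H_v^\theta(-\alpha) - \log P^\theta(A)}{\alpha}\right\} \le S_v^\theta(A) \le \inf_{\alpha>0}\left\{\frac{H_v^\theta(\alpha) - \log P^\theta(A)}{\alpha}\right\}.
\]
Writing $M_A := -\log P^\theta(A) \le M$, the upper bound is $\inf_{\alpha>0}\{(H_v^\theta(\alpha) + M_A)/\alpha\}$. The key monotonicity observation is that, for fixed $\alpha > 0$, the map $M \mapsto (H_v^\theta(\alpha) + M)/\alpha$ is increasing, and an infimum of increasing functions is increasing; hence $\inf_{\alpha>0}\{(H_v^\theta(\alpha)+M_A)/\alpha\} \le \inf_{\alpha>0}\{(H_v^\theta(\alpha)+M)/\alpha\} = \mathcal{I}_{v,+}^\theta(M)$. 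The same argument applied to the lower bound (with $-g$, i.e.\ using the $H_v^\theta(-\alpha)$ branch) yields $S_v^\theta(A) \ge \mathcal{I}_{v,-}^\theta(M)$. This establishes \eqref{sens:bound:gen:0}.

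Finally, to obtain the representations in \eqref{SB:bound:expl} I would invoke part~(1) and part~(2) of Proposition~\ref{aux:bound:func:gen:prop} with $H = H_v^\theta$: there exist finite thresholds $M_\pm$ such that for $M < M_\pm$ the minimizers $\alpha_\pm(M)$ are finite and lie strictly inside the domain of finiteness, and then \eqref{eq:Bpmeqn} gives $\pm\inf_{\alpha>0}\{(H_v^\theta(\pm\alpha)+M)/\alpha\} = \pm(H_v^\theta)'(\pm\alpha_\pm) = \mathbb{E}_{P^\theta_{\pm\alpha_\pm}}[v^T W^\theta]$ by Property~3 of $H_v^\theta$ listed above the theorem, while \eqref{eq:cond:re} gives the characterization $\mathcal{R}(P^\theta_{\pm\alpha_\pm}\,||\,P^\theta) = M$. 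The main point requiring care—and the step I expect to be the chief obstacle—is the passage to the limit $\epsilon \to 0$ inside Theorem~\ref{sec5:thm1}, i.e.\ the dominated-convergence interchange of limit and expectation using the $\mathcal{C}^2$ Taylor bound; but that is already handled in the proof of Theorem~\ref{sec5:thm1}, so here it reduces to the bookkeeping of applying the abstract optimization proposition to the concrete cumulant generating function $H_v^\theta$ and translating the conclusions back through the tilted family $P^\theta_\alpha$.
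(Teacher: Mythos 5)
Your proposal is correct and follows essentially the same route as the paper: apply Theorem~\ref{sec5:thm1} with $-\log P^{\theta}(A)$, use the monotonicity in $M$ of the infimum to pass to the whole class $\mathcal{A}_M$, and then evaluate the infimum via \eqref{eq:Bpmeqn} and \eqref{eq:cond:re} of Proposition~\ref{aux:bound:func:gen:prop} applied to $g=v^TW^{\theta}$. Your explicit verification that $g\in\mathcal{E}$ and the monotonicity observation are details the paper leaves implicit, but they are exactly the intended argument.
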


Similarly to Theorem~\ref{thm:index},  the  rare event sensitivity indices $\mathcal{I}_{v,\pm}^{\theta}(M)$ characterize the sensitivity of the model $P^\theta$ for each $M$-level set 
$\bar{\mathcal{A}}_{M}:=\{ A: \log P^{\theta}(A)=-M\}$, i.e. corresponding to all events which are equally rare and characterized by $M$.

The new sensitivity indices defined in \eqref{SB:bound:expl} are in general less sharp than the
gradient-based indices  $S_{v}^{\theta}(A)$ in \eqref{SI2},
due to the inequalities \eqref{sens:bound:gen:0}.
However, they do not require a rare event sampler  for {\em each} rare  event $A$, as one readily sees 
by comparing \eqref{LRR} to \eqref{SB:bound:expl}. In fact the indices $\mathcal{I}_{v,\pm}^{\theta}(M)$
are identical for the entire classes of rare events  in $\mathcal{A}_M$ or in $\bar{\mathcal{A}}_{M}$.
In this sense, they present similar computational advantages and  trade-offs as other
sensitivity indices for typical observables (not rare event-dependent), such as Fisher
information  bounds, \cite{Dupuis:16}; in particular, 
they are less sharp  but  can be  used to efficiently screen out  insensitive parameters in  directions $v$ in parameter space, i.e directions $v$ where 
$
\mathcal{I}_{v,\pm}^{\theta}(M)= \mathbb{E}_{P^{\theta}_{\pm\alpha_\pm}}  \left[ v^{T}W^{\theta} \right] \approx 0 
$;  we refer for such sensitivity screening results to  \cite{arampatzis2015}, at least  for typical events and observables.
%

\section{Bounds and Approximations for the Rare Event Sensitivity Indices}
\label{sec:estimation}

{ The upper and lower bounds  in Theorem~\ref{sec5:thm1}
and the representation of the sensitivity indices in Theorem~\ref{thm:index} suggest at least two approaches to practically implement  the indices $\mathcal{I}_{v,\pm}^{\theta}(M)$. The first one is based  on concentration inequalities, while the second one  relies on the direct statistical and numerical estimation of  the indices 
$\mathcal{I}_{v,\pm}^{\theta}(M)$. We next discuss the first approach.}


%
{

\medskip
\noindent
\textbf{Concentration Inequalities.} 
Concentration inequalities, i.e., explicit bounds of the probability of tail events, are often obtained via a Chernoff bound by 
using computable upper bounds on the cumulant generating function of the random variable.  Such upper bounds 
typically involve only a few  features of the underlying random variable such as mean, variance, bounds, higher moments, and so on, see for instance \cite{Boucheron:2016}. 

%
%
%

Here we can naturally use such inequalities to  provide simplified and computable bounds for  the variational formula for the sensitivity index, namely
$$
\mathcal{I}_{v,\pm}^{\theta}(M):= \pm  \inf_{\alpha>0}\left\{
\frac{H_{v}^{\theta}(\pm \alpha)+M}{\alpha}\right\} 
 \, ,
$$
by bounding  the cumulant  generating function of the score function 
$
H_{v}^{\theta}(\alpha)=\log\mathbb{E}_{P^{\theta}}\left[  e^{\alpha
v^{T}W^{\theta}}\right].
$
We provide  two such examples, by making the assumption  that the score function is bounded.  One can prove similar results in 
the same spirit  by using different assumptions on the tail behavior  of the score function, e.g. if we assume that the score $v^{T}W^{\theta}$ is a sub-Gaussian or a sub-Poissonian random variable \cite{Boucheron:2016}.

A similar use of various concentration inequalities in order to obtain  computable uncertainty quantification bounds 
for ordinary  observables was proposed recently in \cite{Gourgoulias:17}.

%
%

\begin{theorem}[Bernstein sensitivity bounds for rare events]
\label{thm:Bernstein}
We consider the  same assumptions  as in Theorem \ref{sec5:thm1}; we further assume 
$$
\sup_{x\in\mathcal{X}} v^{T}W^{\theta}(x) \le b_v
$$
for some $b_v >0$.  Furthermore, let 
$$
\mathcal{F}(P^{\theta}) = \mathbb{E}_{P^{\theta}}[ W^\theta (W^\theta)^T] 
$$ 
be  the Fisher information matrix for the parametric family 
$P^{\theta}$. Then, we have the following two bounds on the cumulant  generating function of the score function \eqref{score}, $H_{v}^{\theta}(\alpha)$, and the sensitivity index $\mathcal{I}_{v,+}^{\theta}(M)$:

\medskip
\noindent
(a) For all $\alpha>0$ we have the concentration inequality:
\begin{equation}
\label{eq:conc:ineq:1}
H_{v}^{\theta}(\alpha)=\log\mathbb{E}_{P^{\theta}}\left[  e^{\alpha
v^{T}W^{\theta}}\right] \le \frac{v^T\mathcal{F}(P^{\theta})v}{b_v^2}\phi(b_v\alpha)
\, , 
\end{equation}
where $\phi(x)=e^x-x-1$.

\medskip
\noindent
(b) Using the notation of Theorem~\ref{SB:bound:expl:thm}, we have for all $M < M_\pm$ :
\begin{equation}\label{eq:conc:bound:Bernstein}
\mathcal{I}_{v,+}^{\theta}(M)
\le 
b_vM+\sqrt{2v^T\mathcal{F}(P^{\theta})vM}
\,.
\end{equation}


\end{theorem}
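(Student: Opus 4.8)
The plan is to prove the two parts in turn: part~(a) is the classical Bennett moment-generating-function bound for a mean-zero random variable that is bounded above, and part~(b) follows by weakening the Bennett bound to a sub-gamma form and then solving the one-dimensional optimization defining $\mathcal I_{v,+}^\theta(M)$ by inserting a well-chosen value of $\alpha$.

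For part~(a), set $Y:=v^TW^\theta$, $\sigma^2:=v^T\mathcal F(P^\theta)v=\mathbb E_{P^\theta}[Y^2]=\Var_{P^\theta}(Y)$ and $b:=b_v$, using that $\mathbb E_{P^\theta}[Y]=0$ (Property~1 of $H_v^\theta$). Fix $\alpha>0$ and consider $g(y):=(e^{\alpha y}-1-\alpha y)/y^2$ for $y\neq 0$, extended by $g(0):=\alpha^2/2$. First I would show $g$ is nondecreasing on $\mathbb R$: a short computation gives that $g'(y)$ has the sign of $\rho(\alpha y)/y^3$, where $\rho(u):=(u-2)e^u+u+2$; since $\rho(0)=\rho'(0)=0$ and $\rho''(u)=ue^u$, one gets $\rho'\ge 0$ on all of $\mathbb R$, hence $\rho(u)$ has the same sign as $u$, and therefore $g'\ge 0$ on each of the two half-lines (and $g$ is continuous at $0$). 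Because $Y\le b$ $P^\theta$-a.s., monotonicity yields the pointwise inequality $e^{\alpha Y}-1-\alpha Y=Y^2g(Y)\le Y^2 g(b)=Y^2\phi(\alpha b)/b^2$; taking $\mathbb E_{P^\theta}$ and using $\mathbb E[Y]=0$, $\mathbb E[Y^2]=\sigma^2$ gives $\mathbb E_{P^\theta}[e^{\alpha Y}]\le 1+(\sigma^2/b^2)\phi(\alpha b)$, and then $1+t\le e^t$ followed by taking logarithms produces \eqref{eq:conc:ineq:1}. (As a byproduct this shows $H_v^\theta(\alpha)<\infty$ for every $\alpha>0$, provided $\sigma^2<\infty$, which is part of the standing assumptions.)

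For part~(b), I would first weaken the Bennett bound: since $\phi(x)=\sum_{k\ge2}x^k/k!\le \tfrac12\sum_{k\ge2}x^k=x^2/(2(1-x))$ for $0\le x<1$, part~(a) gives $H_v^\theta(\alpha)\le \sigma^2\alpha^2/(2(1-b\alpha))$ for $0<\alpha<1/b$, whence
\[
\mathcal I_{v,+}^\theta(M)=\inf_{\alpha>0}\frac{H_v^\theta(\alpha)+M}{\alpha}\;\le\;\inf_{0<\alpha<1/b}\frac1\alpha\left(\frac{\sigma^2\alpha^2}{2(1-b\alpha)}+M\right).
\]
Rather than differentiate, I would simply evaluate the right-hand side at the candidate minimizer $\alpha^\star:=u/(\sigma^2+bu)$ with $u:=\sqrt{2\sigma^2M}$, which indeed lies in $(0,1/b)$ since $\alpha^\star<u/(bu)=1/b$. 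Using $1-b\alpha^\star=\sigma^2/(\sigma^2+bu)$ one gets $\sigma^2(\alpha^\star)^2/(2(1-b\alpha^\star))=u^2/(2(\sigma^2+bu))$, and since $M=u^2/(2\sigma^2)$ a direct calculation shows that the bracketed quantity divided by $\alpha^\star$ equals exactly $u+bM=\sqrt{2\sigma^2M}+b_vM$; recalling $\sigma^2=v^T\mathcal F(P^\theta)v$, this is precisely \eqref{eq:conc:bound:Bernstein}.

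There is no genuine obstacle here: both ingredients are standard (Bennett's inequality and the sub-gamma deviation bound, cf.\ \cite{Boucheron:2016}). The only points that need a little care are the monotonicity of $g$ in part~(a) — equivalently the sign analysis of $\rho$ — and checking that the candidate $\alpha^\star$ in part~(b) stays inside the interval $0<\alpha<1/b$ on which the sub-gamma bound is valid; everything else is elementary algebra. Note also that the hypothesis $M<M_\pm$ is not used in the inequality itself: it only ensures that $\mathcal I_{v,+}^\theta(M)$ admits the representation of Theorem~\ref{thm:index}, so the bound holds verbatim.
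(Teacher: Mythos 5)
Your proposal is correct and follows essentially the same route as the paper: part (a) is the Bennett moment-generating-function bound (which the paper simply cites as Theorem 2.9 of \cite{Boucheron:2016}, while you reprove it via the monotonicity of $y\mapsto(e^{\alpha y}-1-\alpha y)/y^2$), and part (b) uses the same weakening $\phi(x)\le x^2/(2(1-x))$ followed by the same optimization — your candidate $\alpha^\star=u/(\sigma^2+b u)$ is exactly the paper's minimizer $c^\star=1+b^{-1}\sqrt{\sigma^2/(2M)}$ under the substitution $c=(b_v\alpha)^{-1}$. Verifying the value at that point rather than differentiating is a perfectly valid (indeed cleaner) way to get the upper bound, and your closing remark about the role of $M<M_\pm$ is also accurate.
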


\begin{proof} (a) Follows immediately from Theorem 2.9 in \cite{Boucheron:2016} by noting that 
$\mathbb{E}_{P^{\theta}}W^\theta=0$, and therefore we have that 
$\mbox{Var}_{P^{\theta}}(v^TW^\theta)=v^T\mathbb{E}_{P^{\theta}}[ W^\theta (W^\theta)^T]v=v^T\mathcal{F}(P^{\theta})v$.

\medskip
\noindent
(b) First we note that $\phi(x)\le\frac{x^2}{2(1-x)}$, for $0\le x<1$. Therefore, part (a) implies that
\begin{equation}
\label{eq:concentration:practical}
\mathcal{I}_{v,+}^{\theta}(M):=   \inf_{\alpha>0}\left\{
\frac{H_{v}^{\theta}(\alpha)+M}{\alpha}\right\} \le \inf_{0<b_v\alpha<1}\left\{
v^T\mathcal{F}(P^{\theta})v
\frac{\alpha^2}{2(1-b_v\alpha)}
+\frac{M}{\alpha}\right\}
 \, .
\end{equation}
We next change variables to $c=(b_v\alpha)^{-1}>1$, and then it is easy to show that the minimum occurs at $c^*=1+b^{-1}\sqrt{\frac{v^T\mathcal{F}(P^{\theta})v}{2M}}$. We conclude by substituting into the right hand side of
\eqref{eq:concentration:practical}.
\end{proof}



\begin{remark}
[A Bennett sensitivity bound for rare events]
A much tighter concentration inequality than \eqref{eq:conc:ineq:1} is given by the Bennett inequality in \cite{Dembo:98}, Lemma 2.4.1; see also Figure~\ref{ci:bounds:fig}.
Here, using again that $\mathbb{E}_{P^{\theta}}W^\theta=0$  and 
$\mbox{Var}_{P^{\theta}}(v^TW^\theta)=v^T\mathcal{F}(P^{\theta})v$, we have that
\begin{equation}
\label{eq:conc:ineq:2}
H_{v}^{\theta}(\alpha)=\log\mathbb{E}_{P^{\theta}}\left[  e^{\alpha
v^{T}W^{\theta}}\right] 
\le 
\log\left(\frac{{b_v}^2}{{b_v}^2+\sigma_v^2}\exp(-\alpha\sigma_v^2/{b_v})+\frac{\sigma_v^2}{{b_v}^2+\sigma_v^2}\exp(\alpha{b_v})\right)\, ,
\end{equation}
for all $\alpha\geq 0$ and where $\sigma_v^2$ is any upper bound of $\mbox{Var}_{P^{\theta}}(v^TW^\theta)=v^T\mathcal{F}(P^{\theta})v$.
Therefore, we can pick 
$$\sigma_v^2:=v^T\mathcal{F}(P^{\theta})v\quad \mbox{and}\quad 
b_v:=\sup_{x\in\mathcal{X}} v^{T}W^{\theta}(x)\, .$$
Then, the corresponding bound on $\mathcal{I}_{v,+}^{\theta}(M)$ is not analytically tractable, however the resulting variational representation is one dimensional and is trivial to find the optimal solution numerically:
\begin{equation}
\label{eq:conc:bound:Bennett}
\mathcal{I}_{v,+}^{\theta}(M)
\le 
\inf_{\alpha>0}\left\{
\frac{1}{\alpha} \log\left(
\frac{{b_v}^2}{{b_v}^2+\sigma_v^2}\exp(-\alpha\sigma_v^2/{b_v})+\frac{\sigma_v^2}{{b_v}^2+\sigma_v^2}\exp(\alpha{b_v})
\right)
+\frac{M}{\alpha}\right\} 
 \, .
\end{equation}

\begin{figure}[tbh]
\begin{center}
\includegraphics[width=0.8\textwidth]{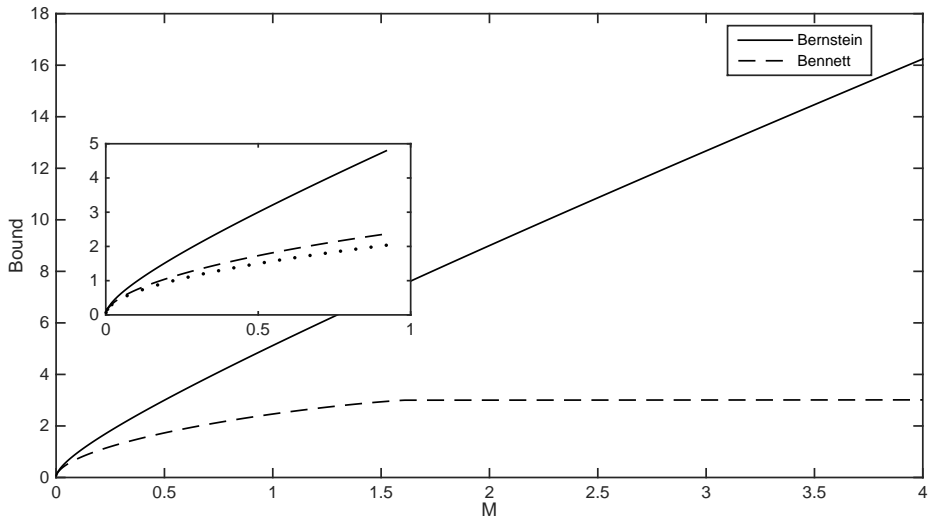}
\end{center}
\caption{ Bernstein (solid) \eqref{eq:conc:bound:Bernstein} and Bennett (dashed) 
\eqref{eq:conc:bound:Bennett} concentration inequality bounds for fixed values of $b_v$ and $\sigma_v$. The Bennett inequality is sharper and in fact it is also tight at $M=\infty$ where its asymptotic limit is $b_v\ge \mathbb{E}_{P^{\theta}_{\pm\alpha_\pm}}  \left[ v^{T}W^{\theta} \right]$. The inset plot zooms in near the origin where the event is not so rare and the bounds have a behaviour close to the linearized bound (dots) \eqref{SB:bound:linear}.}
\label{ci:bounds:fig}
\end{figure}


%
\end{remark}

\begin{remark}
[Linearized bounds]
If an  event $A$ is not rare, i.e. $P^{\theta}(A)\approx1$ or $-\log
P^{\theta}(A)\approx 0$, the calculation of the sensitivity indices $\mathcal{I}_{v,\pm}^{\theta}(M)$ is fairly trivial. Indeed, we can expand the sensitivity index 
\eqref{SB:bound:expl}
in a power series
in $\sqrt{-\log P^{\theta}(A)}$, if we 
assume that 
\begin{equation}
\label{eq: linearization:smallness}
M=-\log P^{\theta}(A)\ll1\, .
\end{equation}
Then the sensitivity index in the bound \eqref{sens:bound:gen:0}
is approximated as
\begin{equation}
\mathcal{I}_{v,\pm}^{\theta}(M)=
\pm \sqrt{2v^{T}\mathcal{F}%
(P^{\theta})vM}+O(M)\ ,
\label{SB:bound:linear}%
\end{equation}
where  $\mathcal{F}(P^{\theta})$ is again the Fisher information matrix for the parametric family 
$P^{\theta}$. The expansion  
\eqref{SB:bound:linear}
follows from the observation that $\mathbb{E}_{P^{\theta}}[\nabla_{\theta}\log p^{\theta}]=0$ and a direct application of Proposition~\ref{opt:bound:linear:gen:prop} together with the formula
for $(H_v^\theta)''(0)=v^T  \mathcal{F}(P^{\theta}) v$.
We also note that the Fisher information matrix arises when sensitivity
bounds for ordinary observables are linearized \cite[Theorem 2.14]{Dupuis:16}.
However, in order to get the bound in (\ref{SB:bound:linear}) two
linearizations were performed here; one capturing the closeness between
$P^{\theta+\epsilon v}$ and $P^{\theta}$ and another capturing that the event
is actually not rare. 
Interestingly, the linearization \eqref{SB:bound:linear} is also identical to the dominant term in the Bernstein bound 
\eqref{eq:conc:bound:Bernstein} for small $M=-\log P^{\theta}(A)$, i.e.  when  
$$M<\frac{2v^T  \mathcal{F}(P^{\theta}) v}{b_v}\, .$$


\end{remark}

\begin{remark} 
Although the bounds \eqref{eq:conc:bound:Bernstein} and \eqref{eq:conc:bound:Bennett} are  less tight than the index 
$\mathcal{I}_{v,+}^{\theta}(M)$, 
they are much easier to estimate than $\mathcal{I}_{v,+}^{\theta}(M)$ since they only involve the sampling of the Fisher information matrix $\mathcal{F}(P^{\theta})$, whose  calculation does not entail rare event sampling. Finally,  the 
sensitivity bounds  
\eqref{eq:conc:bound:Bernstein} and \eqref{eq:conc:bound:Bennett}
  hold for any $M$ and they are not just asymptotically true  in $M$, unlike the linearization 
 \eqref{SB:bound:linear} that  requires  \eqref{eq: linearization:smallness},  and in addition has an uncontrolled higher order error term $O(M)$. 
 %
\end{remark}



}

{
\medskip
\noindent
\textbf{Direct statistical estimation methods  for $\mathcal{I}_{v,\pm}^{\theta}(M)$.}
The two  representations of the sensitivity indices \eqref{SB:bound:expl}
in Theorem~\ref{thm:index},  either as a variational problem, or using the Kullback-Leibler divergence  
suggest at least two approaches to estimate the indices $\mathcal{I}_{v,\pm}^{\theta}(M)$ using  direct numerical simulation. 

First, since the optimization problem in the variational representation in \eqref{SB:bound:expl} is one dimensional, it is fairly trivial to solve numerically, hence  the main roadblock is the estimation of the  cumulant generating function $H_{v}^{\theta}(\pm \alpha)$.  Existing numerical methods to tackle either one of these problems already exist in the literature.  For instance, for the calculation of moment and cumulant generation functions can be performed using interacting particle systems methods \cite{DM:04,DM:05} or splitting techniques \cite{Lec:09,altalt1,gar2,deadup2}.

%
%
%

Using the alternative representation \eqref{SB:bound:expl}  of the minimizer $\mathcal{I}_{v,\pm}^{\theta}(M)$,  namely,
\begin{equation}\label{sec5:thermo}
\mathcal{I}_{v,\pm}^{\theta}(M)=\mathbb{E}_{P^{\theta}_{\pm\alpha_\pm}}  \left[ v^{T}W^{\theta} \right]\, . 
\end{equation}
%
demonstrates  the need to sample from the tilted measures  $P^{\theta}_{\pm\alpha_\pm}$, which  in turn is also intimately  related problem to estimating the cumulant generating function $H_{v}^{\theta}(\pm \alpha)$.
When $\pm \alpha_{\pm}$ is fairly close to zero, the sampling distribution $P^{\theta}_{\pm\alpha_\pm}$ in \eqref{sec5:thermo} is a perturbation of $P^\theta$;  in this  case the  Free Energy Perturbation method, see \cite{Lelievre:10} Section 2.4.1, can be used to simulate  efficiently $\mathcal{I}_{v,\pm}^{\theta}(M)$. However, by 
Proposition~\ref{aux:bound:func:gen:prop},  $\alpha_{\pm}(M)$ is increasing  in $M$ and thus when  $M=-\log P^{\theta}(A)$ grows, $\alpha_{\pm}$ in \eqref{sec5:thermo} can be large, see also the examples in Figure 2.  In this case $P^{\theta}_{\pm\alpha_\pm}$ in \eqref{sec5:thermo} is not necessarily  a perturbation of $P^\theta$. Multilevel Monte Carlo techniques such as chaining methods, see Section 11.6 in \cite{bishop}, or Thermodynamic Integration, as in Section 3.1 in \cite{Lelievre:10}, or the RESTART method \cite{deadup2} could in principle be used in the calculation of the indices \eqref{SB:bound:expl}. 

These issues are outside of the scope of this paper and  we plan to return to this topic and related implementations of the sensitivity indices  in a follow-up publication.
%
%
%

}

%

\section{Examples}
\label{sec:examples}

\noindent
{\bf Exponential family of distributions.}
It is instructive to consider the
example of exponential families, that is,  the family of measures with densities
$p^{\theta}(x)$ given by
\begin{equation*}
p^{\theta}(x) = e^{\theta^{T} t(x) - F(\theta)} \, .\,
\end{equation*}
where $t(x)$ is the vector of sufficient statistics and $F(\theta) = \log\mathbb{E}%
_{R}[ e^{ \theta^{T} t}]$. The score function is then given by
\begin{equation*}
W^{\theta}(x)= \nabla_{\theta}\log p^{\theta}(x) = t(x) - \nabla F(\theta) \,=\,
t(x) -\mathbb{E}_{P^{\theta}}[t]\,,
\end{equation*}
and the cumulant generating function is given by
\[
\begin{aligned} H^\theta_v(\alpha) :&= \log\mathbb E_{P^\8}[\exp\{ \alpha v^T\nabla\log p^\8 \}] \\ &= \log \int \exp\left\{\alpha v^T(t(x) - \nabla F(\theta))\right\} \exp\{\theta^T t(x) - F(\theta)\} R(dx) \\ &= \log \int \exp\left\{(\alpha v+\theta)^Tt(x) - (F(\theta)+\alpha v^T \nabla F(\theta))\right\} R(dx)\\ &= F(\alpha v+\theta) - F(\theta) - \alpha v^T \mathbb{E}_{P^\theta}[t]
= F(\alpha v+\theta) - F(\theta) - \alpha v^T  \nabla F(\theta)\,. \end{aligned}
\]
It is worth noting that the cumulant generating function is the Bregman divergence,
\cite{Erven:14},
 associated with $F$ at points $\alpha v+\theta$ and $\theta$.
This is an explicit quantification of the cost to be paid for tilting the distribution in order to make the event less rare.
Additionally, the tilted measure $P^{\theta}_{\alpha}$ has density
\[
\frac{dP^{\theta}_{\alpha}}{dR}= \exp\{ \alpha v^{T}( t(x) -\mathbb{E}%
_{P^{\theta}} [t] )- H^{\theta}_{v} (\alpha) + \theta^{T} t(x) - F(\theta) \}
= \exp\{ (\theta+ \alpha v)^{T} t(x) - F(\theta+ \alpha v) \}=p^{\theta+
\alpha v} 
\]
and thus $P^{\theta}_{\alpha}= P^{\theta+ \alpha v}$ also belongs to the same
exponential family. Finally the optimal $\alpha_{\pm}$ are the solutions of
the equation
\[
\log P(A) = -\mathcal{R}\left( { P^{\theta+ \alpha v}}{\,||\,}{P^{\theta}%
}\right)  = F( \theta+ \alpha v) - F(\theta) + \alpha v^T \nabla F( \theta+ \alpha
v)\,,
\]
and the sensitivity bounds and corresponding indices $\mathcal{I}_{v,\pm}^{\theta}(M)$ in Theorem~\ref{thm:index}
take the form
\begin{equation}\label{sens:bounds:exp}
\mathcal{I}_{v,-}^{\theta}(M)= \mathbb{E}_{P^{\theta-\alpha_{-}v}}\left[ v^T t\right]  - \mathbb{E}%
_{P^{\theta}}\left[ v^T t \right]  \,\le\, S_{v}^{\theta}(A) \,\le\,\\
 \mathbb{E}_{P^{\theta+\alpha_{+}v}}\left[ v^T t \right]  - \mathbb{E}%
_{P^{\theta}}\left[ v^T t \right]=\mathcal{I}_{v,+}^{\theta}(M) 
\, , 
\end{equation}
i.e., they  are  expressed as the difference between the mean sufficient statistics
under the optimally tilted distributions and  the mean sufficient statistics
under the original distribution. 

\textrm{Finally, many exponential families have explicit formulas for the
cumulant generating function of the sufficient statistic. Thus, the sensitivity
bounds as well as the two characterizations of the optimal values $\alpha_\pm$
can be  visualized. Figures~\ref{gaussian:fig}--\ref{laplacian:fig} show
the upper bound function, $(H(\alpha)+M)/\a$, with ${H}(\alpha):=H_v^\8(\a)$
as well as the derivative of the cumulant generating function for various
distributions and various values of $M$. The minimizer of the upper bound,
$\alpha_+$, can be geometrically characterized as the intersection of the upper bound function
 and ${H}^{\prime}(\alpha)$. Similar plots are obtained for the lower bound.  
}

\begin{figure}[th]
\textrm{\centering
\subfigure[Gaussian distribution with $\8$ being the mean value over the variance. The density of Gaussian distribution is given by $p^\8(x) = e^{\8 x - F(\8)}$ where $F(\8)=\frac{1}{2}\sigma^2\8^2$ with $\sigma^2$ being the variance.]{		\includegraphics[width=.40\textwidth]{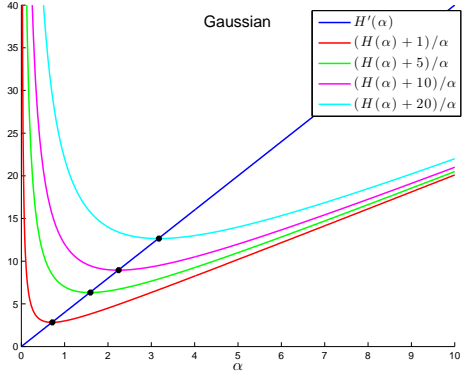}
		\label{gaussian:fig}}  \quad
\subfigure[Poisson distribution with $\8$ being the logarithm of the Poisson rate. The density of Poisson distribution is given by $p^\8(x) = e^{\8 x - F(\8)}$ where $F(\8)=e^\8$.]{
		\includegraphics[width=.40\textwidth]{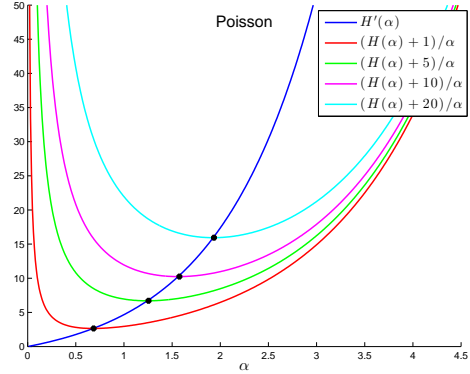}
		\label{poisson:fig}}  \quad}
\par
\textrm{\subfigure[Bernoulli distribution with $\8$ being the logarithm of the ratio $\frac{p}{1-p}$ where $p$ is the probability of success. The density of  Bernoulli distribution is given by $p^\8(x) = e^{\8 x - F(\8)}$ where $F(\8)=\log(1+e^\8)$.]{		\includegraphics[width=.40\textwidth]{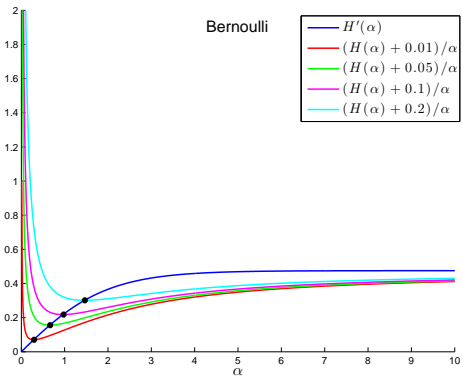}
		\label{bernoulli:fig}}  \quad
\subfigure[Centered Laplacian distribution with $\8$ being the negative of the mean value. The density of centered Laplacian distribution is given by $p^\8(x) = e^{\8 |x| - F(\8)}$ where $F(\8)=\log(-\frac{1}{\8})$.]{
		\includegraphics[width=.40\textwidth]{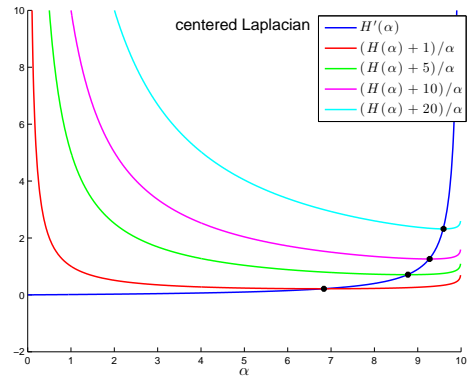}
		\label{laplacian:fig}}  \quad}
\caption{Graphical representation of the upper bound function, $(H(\a)+M)/\a$, and the derivative of cumulant generating function, $H^{\prime}(\alpha)$, for different members of the exponential family of distributions and various values of $M$. We assume the positive direction (i.e., $v=1$) thus $H(\a):=H_v^\8(\a) = F(\8+\a)-F(\8)-\a F^\prime(\8)$. The minimizer of the upper bound, $\alpha_{+}=\a_+(M)$, is depicted as a black dot. Despite having a unique minimum,  the shape of the bound function varies significantly across the distributions.}%
\label{exp:fam:fig}%
\end{figure}

\medskip
\noindent
{\bf Normal distribution.}
We next consider the specific case of a normal distribution and demonstrate that  the sensitivity bounds are  easier to implement
than the sampling of the rare event and the corresponding likelihood ratio method for rare events, discussed in Section 1. In this case we can consider the 
 tail events such like $A=\{ X > L\}$ or any other rare event $A$ characterized by the parameter $M$,
$$
M\ge -\log P^\theta(A)\, , \quad \mbox{where}\quad P^\theta= {\mathcal N}(\mu, \sigma^2)\, , 
$$
as well as the corresponding sensitivity index \eqref{LRR}:
\begin{equation}
\label{LRR:normal}
S_{v}^{\theta}(A)
= \mathbb{E}_{P^{\theta}_{|A}} v^T W^{\theta}
=\mathbb{E}%
_{P^{\theta}_{|A}}[v^T t(x)] -\mathbb{E}_{P^{\theta}}[v^T t]\, .
\end{equation}
On the other hand, we consider the bounds \eqref{sens:bounds:exp}, where the optimal values of $\pm\alpha_\pm$ are given by 
the explicitely solvable equation
\begin{equation}\label{alpha:normal}
\mathcal{R}\left( { P^{\theta}_{\pm\alpha_{\pm}}}{\,||\,}{P^{\theta}}\right)
=M \, .
\end{equation}
In fact, using the  notation of the exponential family we have that
$$
P^\theta=e^{\theta^{T} t(x) - F(\theta)}={\mathcal N}(\mu, \sigma^2)\, , \quad \mbox{where} \quad t(x)=(x, x^2/2)\, , \quad \theta=(\theta_1, \theta_2)=(\mu/\sigma^2, -\sigma^{-2})\, .
$$
Furthermore, for any unit vector $v=(v_1, v_2)$, 
$$
W^\theta(x)=(x-\mu, x^2/2- (\sigma^2+\mu^2)/2)\, , \mbox{ and}\quad  v^T W^{\theta}(x)=v_1(x-\mu)+v_2\left(x^2/2- (\sigma^2+\mu^2)/2)\right)\, .
$$
Next we define the corresponding tilted distributions 
\begin{equation}
\label{tilted:normal}
P^\theta_\alpha=P^{\theta+\alpha v}= {\mathcal N}\left(\frac{\mu+\alpha\sigma^2v_1}{1-\alpha\sigma^2v_2}, \frac{\sigma^2}{1-\alpha\sigma^2v_2}\right)\, .
\end{equation}
Using the normality of the distributions $P^\theta_\alpha$ and $P^\theta$ we have:
$$
\mathcal{R}\left( { P^{\theta}_{\alpha}}{\,||\,}{P^{\theta}}\right)=\frac{1}{2}\left[\frac{1}{1-\alpha\sigma^2v_2}-1+\log(1-\alpha\sigma^2v_2)+\alpha^2\sigma^2
\left(\frac{v_1+\mu v_2}{1-\alpha\sigma^2v_2}\right)
\right]\, .
$$
For instance,  we can consider the solution of \eqref{alpha:normal} in the case $v=(1, 0)$. Then we obtain that 
$\alpha_\pm=\sqrt{2M+1}/\sigma$ and by replacing it in \eqref{tilted:normal} we have 
the tilted distributions for upper and lower bounds in \eqref{sens:bounds:exp}
$$
P^{\theta}_{\pm\alpha_{\pm}}={\mathcal N}\left(\mu\pm\sigma\sqrt{2M+1}\, , \, \sigma^2\right)\, .
$$
Furthermore the bounds and the corresponding sensitivity indices  in \eqref{sens:bounds:exp}--where 
$S_{v}^{\theta}(A)$ is given by \eqref{LRR:normal}--take the following simple form: 
\begin{equation}
\label{eq:gaussian:bound}
\mathcal{I}_{v,\pm}^{\theta}(M)= 
\pm\sigma\sqrt{2M+1}\, .
\end{equation}
It is worth noting that the upper and lower bounds $\mathcal{I}_{v,\pm}^{\theta}(M)$
do not involve the sampling of each specific rare event $A$ as required by \eqref{LRR:normal}.

{
Finally, this example provides a demonstration of a parameter insensitivity analysis based on the upper and lower bounds $\mathcal{I}_{v,\pm}^{\theta}(M)$ in Theorem~\ref{thm:index}. Specifically, if $\sigma\sqrt{2M+1}\ll 1$
in \eqref{eq:gaussian:bound}, then we readily obtain from Theorem~\ref{thm:index} that the probability of the rare event $A$ is insensitive with respect to  the parameter $\theta_1=\mu/\sigma^2$. This insensitivity is quantified by the indices in \eqref{eq:gaussian:bound} without having to calculate explicitly the gradient  of $\log P^\theta(A)$ in the direction $v=(1, 0)$.
}

\section{Sensitivity Bounds for Large Deviation Rate Functions}
\label{sec:Sens:LDP}

Rare events are closely related to the theory of large deviations, \cite{Dupuis:97,Dembo:98}, and in this section 
we show how our sensitivity bounds and the sensitivity indices $\mathcal{I}_{v,\pm}^{\theta}(M)$ in
Theorem~\ref{thm:index} are related 
to large deviation rate functions.
We first present a  general framework  which applies to any large deviation principle, and subsequently  we discuss 
 more concrete problems, such as independent, identically distributed (IID)  and Markov sequences.
 
\medskip 
\noindent
{\bf General result.}
Recall that a sequence of probability measures $\{P_n\}$ on a Polish space $\mathcal{X}$ 
satisfies a large deviation principle if there exists a lower-semicontinuous function
$I: \mathcal{X} \to \mathbb{R}$ with compact level sets such that for any Borel set $A \subset \mathcal{X}$ 
\begin{equation}
- \underline{I} (A) \le  \liminf_{n\rightarrow\infty} \frac{1}{n}\log P_n( A) \leq 
 \limsup_{n\rightarrow\infty}\frac{1}{n}\log P_n( A) \leq- \overline I(A)\,, \label{LDP:1}%
\end{equation}
where, with $A^{\circ}$ denoting the interior of $A$ and $\bar{A}$ the closure of $A$, 
\begin{equation*}
\underline{I} (A) =  \inf_{x\in A^{\circ}}I(x)\,, \quad  \overline I(A) = \inf_{x\in\bar{A}}I(x) \,.
\end{equation*}
For ``nice sets'', e.g.  open and convex,  we have that  $\underline{I} (A)= \overline{I} (A)$ in 
which case the set $A$ is called an $I$-continuity set, \cite{Dembo:98}, although we do not need such an assumption  here.   

To perform the sensitivity analysis introduced in previous sections, we will consider a parametric family $P^\theta_n$ 
of probability measures satisfying a large deviation principle with rate function $I^\theta$; we also 
 fix an event of interest denoted by $A$.  { We intend to obtain  bounds on the relative rate of change  of the probability of the rare event $A$, which in view of the large deviation principle \eqref{LDP:1} translates to  bounds on the derivatives of the  functions ${\overline I^\theta}(A)$ and ${\underline  I^\theta}(A)$ with respect to $\theta$.}
We assume that $P^\theta_n$ has a density $p^\theta_n$ with respect to some reference 
measure $R(dx)$; we  consider the score function $W^\theta_n = \nabla_\theta \log p^\theta_n(x)$, as well as the sensitivity indices
\begin{equation}
S_{v,n}^{\theta}(A)=v^{T}\nabla_{\theta}\log P_{n}^{\theta}(A) \ ,
\label{SI:1n}%
\end{equation}
which we assume both to be well-defined.  Under the additional assumptions in Section \ref{sec:Sens:bounds}, see Theorem~\ref{sec5:thm1} and Theorem~\ref{thm:index}, we obtain the  following sensitivity bounds
\begin{equation} 
\label{eq:sensbounds:ldp:finiten}
\mathcal{I}_{v,n,-}^{\theta}(M)\le - \inf_{\alpha >0} \left\{ \frac{ H^\theta_{v,n}(-\alpha)  - \log P^\theta_n (A) }{\alpha} \right\}
\,\le\,    S_{v,n}^{\theta}(A) 
  \,\le\,
  \inf_{\alpha >0} \left\{ \frac{ H^\theta_{v,n}(\alpha)  - \log P^\theta_n (A)}{\alpha}
  \right\}\le 
  \mathcal{I}_{v,n,+}^{\theta}(M) \,,
\end{equation}
for all events $A$ such that  $M\ge -\log P^\theta_n (A)$, where 
\begin{equation*}{H}^\theta_{v,n}(\alpha)=\,\log\mathbb{E}_{{P_{n}^{\theta}}}[e^{\alpha 
v^{T} W^\theta_n} ]\,.
\end{equation*}
All quantities are indexed by $n$ to denote their dependence on the sequence of the probability measures $\{P_n\}$.
Using  \eqref{LDP:1} and \eqref{eq:sensbounds:ldp:finiten}
we obtain immediately the following result. 

\begin{theorem}[Sensitivity indices and large deviation limits] \label{ldp:sens:bound:gen} Assume that the limit 
\begin{equation*}
h^\theta_v(\alpha) \,=\, \lim_{n \to \infty} \frac{1}{n} {H}^\theta_{v,n}(\alpha)
\end{equation*}
exists and define the rare event sensitivity indices
\begin{equation}
\bar{s}_{v}^{\theta}(A):=\limsup_{n\rightarrow\infty} \frac{1}{n} S_{v,n}^{\theta}(A)\,,\quad \underbar s_{v}^{\theta}(A):=\liminf_{n\rightarrow\infty}%
\frac{1}{n} S_{v,n}^{\theta}(A)\,. \label{SI:LDP:upper}%
\end{equation}
Then we have 
\begin{equation*}
- \inf_{\alpha>0}  \left\{ \frac{h^\theta_v(- \alpha) + \bar{I}^\theta (A) }{\alpha} \right\}      
\leq\underbar s_{v}^{\theta
}(A)\leq\bar{s}_{v}^{\theta}(A)\leq \inf_{\alpha>0}  \left\{ \frac{h^\theta_v(\alpha) + \underbar{I}^{\theta} (A) }{\alpha} \right\}   .
\label{SB:LDP:Th1}%
\end{equation*}
Furthermore, as  in Theorem~\ref{thm:index},
we can define the sensitivity indices in terms of { $M$-level sets of the large deviation functionals}
\eqref{LDP:1}:
\begin{equation}\label{eq:SI:LDP:general}
\mathcal{I}_{v,\infty, -}^{\theta}(M):=- \inf_{\alpha>0}  \left\{ \frac{h^\theta_v(-  \alpha) + M}{\alpha} \right\}
\, , \quad 
\mathcal{I}_{v,\infty, +}^{\theta}(M):= \inf_{\alpha>0}  \left\{ \frac{h^\theta_v(\alpha) + M}{\alpha} \right\} \,.
\end{equation}

\end{theorem}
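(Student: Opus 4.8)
The plan is to deduce the stated chain of inequalities directly from the finite-$n$ sensitivity bounds \eqref{eq:sensbounds:ldp:finiten} by normalizing by $n$ and letting $n\to\infty$, controlling $\tfrac1n\log P^\theta_n(A)$ through the large deviation principle \eqref{LDP:1} and the cumulant term through the assumed convergence $\tfrac1n H^\theta_{v,n}(\alpha)\to h^\theta_v(\alpha)$. The one point to settle before taking limits is that one must keep the sharp middle inequalities of \eqref{eq:sensbounds:ldp:finiten}, namely
\[
-\inf_{\alpha>0}\frac{H^\theta_{v,n}(-\alpha)-\log P^\theta_n(A)}{\alpha}\le S^\theta_{v,n}(A)\le\inf_{\alpha>0}\frac{H^\theta_{v,n}(\alpha)-\log P^\theta_n(A)}{\alpha},
\]
which come from Theorem~\ref{sec5:thm1} and still carry $-\log P^\theta_n(A)$ itself; a fixed threshold $M$ cannot be used uniformly in $n$ for a genuinely large-deviation-rare event, because $M\ge-\log P^\theta_n(A)$ forces $M$ to grow linearly with $n$.

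Next I would record the two one-sided consequences of \eqref{LDP:1},
\[
\limsup_{n\to\infty}\Big(-\tfrac1n\log P^\theta_n(A)\Big)\le\underbar{I}^\theta(A),\qquad \liminf_{n\to\infty}\Big(-\tfrac1n\log P^\theta_n(A)\Big)\ge\bar{I}^\theta(A),
\]
which are immediate from $\underbar{I}^\theta(A)=\inf_{A^{\circ}}I^\theta$ and $\bar{I}^\theta(A)=\inf_{\bar{A}}I^\theta$. Dividing the upper bound above by $n$ and bounding $\inf_{\alpha>0}(\cdot)$ by its value at an arbitrary fixed $\alpha_0>0$, then taking $\limsup_n$ (the $\limsup$ of the sum splits because $\tfrac1n H^\theta_{v,n}(\alpha_0)$ converges), gives $\bar{s}^\theta_v(A)\le\big(h^\theta_v(\alpha_0)+\underbar{I}^\theta(A)\big)/\alpha_0$ for every $\alpha_0$, and infimizing over $\alpha_0$ yields the upper bound. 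This uses only the trivial direction $\limsup_n\inf_\alpha\le\inf_\alpha\limsup_n$, so no uniformity in the convergence $\tfrac1n H^\theta_{v,n}\to h^\theta_v$ is needed. The lower bound is obtained the same way after writing $-\inf_\alpha(\cdot)=\sup_\alpha(-(\cdot))$ and taking $\liminf_n$ at a fixed $\alpha_0$, producing $\underbar s^\theta_v(A)\ge-\inf_{\alpha>0}\big(h^\theta_v(-\alpha)+\underbar{I}^\theta(A)\big)/\alpha$ (which coincides with the displayed form when $A$ is an $I^\theta$-continuity set, $\bar{I}^\theta(A)=\underbar{I}^\theta(A)$).

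Once the chain of inequalities is in place, the claim that the two limiting functionals define sensitivity indices $\mathcal{I}^\theta_{v,\infty,\pm}(M)$ attached to $M$-level sets of the rate function follows exactly as in the discussion after Theorem~\ref{thm:index}: the maps $M\mapsto\mathcal{I}^\theta_{v,\infty,\pm}(M)$ are monotone in $M$ by construction, so a single value serves the whole family of events whose rate exceeds (resp.\ is at most) the corresponding threshold. I do not expect a genuine obstacle here---the argument is essentially a normalization of Theorems~\ref{sec5:thm1}--\ref{thm:index}---and the only places that demand attention are the one-sided $\liminf/\limsup$ bookkeeping from \eqref{LDP:1} just described and checking that $h^\theta_v$ is finite on a neighborhood of the origin so that the resulting bounds are informative rather than vacuous.
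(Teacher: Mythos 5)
Your proof is correct and is exactly the argument the paper intends: the paper's own proof of Theorem~\ref{ldp:sens:bound:gen} consists of the single sentence that the result follows ``immediately'' from \eqref{LDP:1} and \eqref{eq:sensbounds:ldp:finiten}, and your normalize-by-$n$, fix-$\alpha_0$-then-infimize bookkeeping (using only $\limsup_n\inf_\alpha\le\inf_\alpha\limsup_n$, so no uniformity in $\alpha$ is needed) is the right way to fill that in. You are also right that one must work with the middle, $\log P^\theta_n(A)$-dependent inequalities of \eqref{eq:sensbounds:ldp:finiten} rather than a fixed threshold $M$, since $M\ge -\log P^\theta_n(A)$ would have to grow linearly in $n$.

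One substantive remark, which you correctly flag in passing: your one-sided limit argument yields $\underline{I}^{\theta}(A)$ in \emph{both} bounds, because in each case the quantity that must be controlled from above is $\limsup_n\bigl(-\tfrac1n\log P^\theta_n(A)\bigr)\le\underline{I}^{\theta}(A)$. The $\bar{I}^{\theta}(A)$ printed in the lower bound of the theorem does not follow from \eqref{LDP:1} alone: since $\bar{I}^{\theta}(A)\le\underline{I}^{\theta}(A)$, the printed lower bound is the \emph{stronger} claim, and obtaining it would require $\limsup_n\bigl(-\tfrac1n\log P^\theta_n(A)\bigr)\le\bar{I}^{\theta}(A)$, i.e.\ precisely that $A$ be an $I^\theta$-continuity set. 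So your derivation proves the theorem with $\underline{I}^{\theta}(A)$ in place of $\bar{I}^{\theta}(A)$ on the left, which coincides with the displayed statement exactly in the $I^\theta$-continuity case; this appears to be a slip in the paper's statement rather than a gap in your argument.
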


In order to obtain more precise and concrete results and representations of the sensitivity indices we consider next  some standard examples from the theory of large deviations.

\smallskip
\noindent
{\bf Sequences of IID random variables.} For IID sequences one can, unsurprisingly, 
bound the sensitivity of the large deviation rate function
in terms of the moment generating function of the score function.

\begin{theorem}[Sensitivity indices for IID random sequences] Let $X$ be a random vector taking values in $\mathbb{R}^d$ with probability distribution 
$P^\theta$ and density  $p^\theta = \frac{dP^\theta}{dR}$ with respect to some reference measure 
$R$. Assume that the score function $W^\theta=\nabla_\theta \log p^\theta$  
satisfies the integrability  conditions of Theorem \ref{sec5:thm1} with cumulant generating function
$H^\theta_v(\alpha) = \log  \mathbb{E}_{P^\theta}[ e^{\alpha v^T W^\theta}]$.
Assume that the moment generating function $\mathbb{E}_{P^\theta} [e^{ \lambda^T X}]$  
is finite for $\lambda \in \mathbb{R}^d$ in the neighbourhood of the origin  
so that, if  $X_i$, $i=1, 2, \cdots $ is  a sequence of IID 
random variables with common distribution $P^\theta$, then $S_n = \frac{1}{n} \sum_{k=1}^n X_i$ satisfies a large  deviation  principle with rate function 
\[ 
I^\theta(x) = \sup_{\lambda \in \mathbb{R}^d} \left\{ \lambda^T x -   \log \mathbb{E}_{P^\theta} [ e^{ \lambda^T X} ] \right\} \,.
\] 
The sensitivity indices defined in  \eqref{SI:LDP:upper} then satisfy  
\begin{equation}
- \inf_{\alpha>0} \left\{ \frac{ H^\theta_v(-\alpha)  + \bar{I}^\theta (A) }{\alpha} \right\}      
\leq\underbar s_{v}^{\theta
}(A)\leq\bar{s}_{v}^{\theta}(A)\leq \inf_{\alpha>0}  \left\{ \frac{H^\theta_v(\alpha)  + \underbar{I}^{\theta} (A) }{\alpha} \right\}   
\label{SB:LDP:IID1} \, .
\end{equation} 
Moreover, if $\underbar{I}^\theta (A)<M_+$ ($\bar{I}^\theta (A)<M_-$), where $M_\pm$ are as in Proposition~\ref{aux:bound:func:gen:prop}, then there exist finite $\alpha_{\pm}$ such that 
\begin{equation}
\label{SB:bound:cond1}
\mathbb{E}_{P^\theta_{-\alpha_-}}[ v^T W^\theta] 
  \leq\underbar s_{v}^{\theta}(A)\leq\bar{s}_{v}^{\theta}(A)\leq \mathbb{E}_{P^\theta_{\alpha_+}}[ v^T W^\theta] 
\end{equation}
with 
\begin{equation}
\mathcal{R}\left(  P^{\theta}_{-\alpha_{-}} {\,||\,} P^{\theta}  \right)
= \bar{I}^\theta(A)\,, \quad  \mathcal{R}\left(  P^{\theta}_{\alpha_{+}}{\,||\,}{P^{\theta}}\right)
= \underbar{I}^\theta(A) \,,\label{SB:bound:cond2}%
\end{equation}
and $P^\theta_\alpha$ is the measure with density $\frac{dP^\theta_\alpha}{dP^\theta} = e^{ \alpha v^T W^\theta- H^\theta_v(\alpha)}$. Finally we can define sensitivity indices $\mathcal{I}_{v,\infty, \pm}^{\theta}$
similarly to \eqref{eq:SI:LDP:general}.
\end{theorem}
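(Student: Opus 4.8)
The statement should fall out of the general large‑deviation result Theorem~\ref{ldp:sens:bound:gen} once the limiting cumulant generating function $h^\theta_v$ is identified explicitly, after which the refined representation \eqref{SB:bound:cond1}--\eqref{SB:bound:cond2} is a direct application of Proposition~\ref{aux:bound:func:gen:prop} with $g=v^TW^\theta$. There is no deep difficulty; the only real work is the bookkeeping needed to run the finite‑$n$ sensitivity bounds of Theorems~\ref{sec5:thm1}--\ref{thm:index} on the product measures and to pass correctly to the large‑$n$ limit.

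\emph{Step 1 (product structure and $h^\theta_v=H^\theta_v$).} For the IID sequence I take $P^\theta_n:=(P^\theta)^{\otimes n}$ on $(\mathbb{R}^d)^n$, with density $\prod_{k=1}^n p^\theta(x_k)$ with respect to $R^{\otimes n}$, so that $P^\theta_n(\{\,(x_1,\dots,x_n):\tfrac1n\sum_k x_k\in A\,\})$ is the probability of $\{S_n\in A\}$ and $S_{v,n}^\theta(A)=v^T\nabla_\theta\log P^\theta_n(\{\bar x_n\in A\})$. The score of $P^\theta_n$ is the sum $W^\theta_n=\sum_{k=1}^n W^\theta(x_k)$ of $n$ independent copies, hence by independence
\[
{H}^\theta_{v,n}(\alpha)=\log\mathbb{E}_{P^\theta_n}\big[e^{\alpha v^T\sum_k W^\theta(X_k)}\big]=n\log\mathbb{E}_{P^\theta}\big[e^{\alpha v^T W^\theta}\big]=n\,H^\theta_v(\alpha),
\]
so $h^\theta_v(\alpha)=\lim_{n\to\infty}\tfrac1n {H}^\theta_{v,n}(\alpha)=H^\theta_v(\alpha)$, the limit being trivial. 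I also check that the integrability hypothesis of Theorem~\ref{sec5:thm1} is inherited by the product measures: since $\nabla^2_\theta\log p^\theta_n=\sum_k\nabla^2_\theta\log p^\theta(x_k)$ and $\sup_{|\theta-\theta'|<\delta}\big|\sum_k a_k(\theta')\big|\le\sum_k\sup_{|\theta-\theta'|<\delta}|a_k(\theta')|$, the relevant exponent for $P^\theta_n$ is dominated by $\alpha_0\sum_k\big(v^T W^\theta(x_k)+\tfrac\delta2\sup_{\theta'}|v^T\nabla^2_\theta\log p^{\theta'}(x_k)v|\big)$, whose exponential expectation factorizes into the $n$‑th power of the finite single‑variable quantity. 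Hence Theorems~\ref{sec5:thm1} and~\ref{thm:index} apply to $P^\theta_n$ for every $n$, giving the finite‑$n$ bounds \eqref{eq:sensbounds:ldp:finiten}.

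\emph{Step 2 (LDP and the bounds \eqref{SB:LDP:IID1}).} The assumption that $\lambda\mapsto\mathbb{E}_{P^\theta}[e^{\lambda^T X}]$ is finite in a neighbourhood of the origin is exactly the hypothesis of Cram\'er's theorem in $\mathbb{R}^d$, so $\{S_n\}$ satisfies the LDP with the stated convex rate function $I^\theta$. With $h^\theta_v=H^\theta_v$ from Step~1, Theorem~\ref{ldp:sens:bound:gen} applies verbatim and yields \eqref{SB:LDP:IID1}; concretely one divides the middle inequalities of \eqref{eq:sensbounds:ldp:finiten} by $n$, substitutes ${H}^\theta_{v,n}=nH^\theta_v$, and lets $n\to\infty$, using $\limsup_n(-\tfrac1n\log P^\theta_n(A))\le\underbar{I}^\theta(A)$ and $\liminf_n(-\tfrac1n\log P^\theta_n(A))\ge\bar I^\theta(A)$ from \eqref{LDP:1}, the monotonicity and continuity in $M$ of $M\mapsto\inf_{\alpha>0}\{({H}^\theta_v(\pm\alpha)+M)/\alpha\}$, and the elementary interchange $\limsup_n\inf_{\alpha>0}\le\inf_{\alpha>0}\limsup_n$.

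\emph{Step 3 (refined representation and indices).} For \eqref{SB:bound:cond1}--\eqref{SB:bound:cond2} I apply Proposition~\ref{aux:bound:func:gen:prop} with $P=P^\theta$, $g=v^T W^\theta\in\mathcal E$, $H=H^\theta_v$ and exponential family $P_\alpha=P^\theta_\alpha$ (the degenerate case $g$ constant $P^\theta$‑a.s.\ is trivial, since then $g=0$ a.s.\ and all quantities vanish). Taking $M=\underbar{I}^\theta(A)$: if $\underbar{I}^\theta(A)<M_+$, parts (1)--(2) of the Proposition give a finite $\alpha_+=\alpha_+(M)$ with $\inf_{\alpha>0}\{(H^\theta_v(\alpha)+\underbar{I}^\theta(A))/\alpha\}=(H^\theta_v)'(\alpha_+)=\mathbb{E}_{P^\theta_{\alpha_+}}[v^T W^\theta]$ and $\mathcal{R}(P^\theta_{\alpha_+}\,||\,P^\theta)=\underbar{I}^\theta(A)$; taking instead $M=\bar I^\theta(A)<M_-$ and the $-$ branch gives $-\inf_{\alpha>0}\{(H^\theta_v(-\alpha)+\bar I^\theta(A))/\alpha\}=\mathbb{E}_{P^\theta_{-\alpha_-}}[v^T W^\theta]$ with $\mathcal{R}(P^\theta_{-\alpha_-}\,||\,P^\theta)=\bar I^\theta(A)$. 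Inserting these two identities into \eqref{SB:LDP:IID1} yields \eqref{SB:bound:cond1}--\eqref{SB:bound:cond2}, and the indices $\mathcal{I}^\theta_{v,\infty,\pm}$ are then defined exactly as in \eqref{eq:SI:LDP:general} with $h^\theta_v$ replaced by $H^\theta_v$. The main point to watch throughout is the tensorization in Step~1, so that the finite‑$n$ bound \eqref{eq:sensbounds:ldp:finiten} is legitimate for every $n$, together with the correctness of the signs and of the $\liminf/\limsup$ versus $\inf_{\alpha}$ interchange in Step~2; modulo this, the theorem is an immediate consequence of Theorem~\ref{ldp:sens:bound:gen}, Cram\'er's theorem, and Proposition~\ref{aux:bound:func:gen:prop}.
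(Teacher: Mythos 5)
Your proposal is correct and follows essentially the same route as the paper: exploit the product structure so that the score of $P^\theta_n$ is a sum of IID copies and the per-sample cumulant generating function is exactly $H^\theta_v$, apply the finite-$n$ bounds of Theorem~\ref{sec5:thm1}, pass to the limit using the LDP, and invoke Proposition~\ref{aux:bound:func:gen:prop} for the representation \eqref{SB:bound:cond1}--\eqref{SB:bound:cond2}. Your added checks (tensorization of the integrability hypothesis, monotonicity in $M$, and the $\limsup/\inf_\alpha$ interchange) are details the paper leaves implicit but do not change the argument.
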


\begin{proof}  Let $Q_n$ denote the distribution of $\frac{S_n}{n}$ and  $P_{n}^{\theta}=P^{\theta}\times \cdots
\times P^{\theta}$ the joint distribution of  $(X_1,  \cdots, X_n)$  which has  density $p_n^\theta(x_1, 
\cdots, x_n) =\prod_{1=1}^n p^\theta(x_i)$ with respect to $R_n=R \times \cdots \times R$.  
We have then 
\[
\nabla_\theta \log Q_n(A)  \,=\, \nabla_\theta \log P_{n}^{\theta} \left\{ (x_1, \cdots, x_n)\,: \frac{1}{n}\sum_{i=1}^n x_i \in A \right\}. 
\]

The score function for the probability $P_{n}^{\theta}$ is given by  
\[
W^\theta_n(x_1, \cdots , x_n)  =   \nabla_\theta \log p_n^\theta(x_1, \cdots, x_n) = \sum_{k=1}^n 
W^ \theta( x_k) 
\]
and thus we obtain 
\[
\frac{1}{n} \log \mathbb{E}_{P_{n}^{\theta}}\left[  e^{ \alpha v^T W^\theta_n(X_1, \cdots ,X_n)}\right]  =
\frac{1}{n} \log \mathbb{E}_{P_{n}^{\theta}}\left[ \prod_{k=1}^n e^{ \alpha v^T W^\theta(X_k)}\right] \,=\,
 \log \mathbb{E}_{P^{\theta}}\left[  e^{ \alpha v^T W^\theta(X)} \right] \,.
\]
Using Theorem \ref{sec5:thm1} and taking $n \to \infty$ we obtain \eqref{SB:LDP:IID1}.  Finally, using Proposition~\ref{aux:bound:func:gen:prop} we obtain the representation \eqref{SB:bound:cond1}
and \eqref{SB:bound:cond2}. 
\end{proof}

\bigskip
\noindent
{\bf Markov sequences.} We can apply our results to  
any stochastic processes  for which a large deviation 
principle holds, but here we concentrate on the simplest 
case of discrete-time Markov chains  (DTMC) with finite state 
space where the rate function is easy to obtain
and we can dispense with technical assumptions.  
Let $\{X_{k}\}$ be an irreducible finite-state Markov chain with state space 
$\Sigma$ and transition matrix  $\pi^\theta(i,j)$ depending on a parameter vector 
$\theta 
\in \mathbb{R}^k$. We assume that $\pi^\theta$ generates an ergodic Markov
chain, that $\pi^\theta(i,j)$ depends smoothly on $\theta$ and that for any 
$i \in \Sigma$ the transition probabilities 
$\pi^{\theta}(i,j)$ and $\pi^{\theta+  \epsilon v }(i,j)$ are mutually 
absolutely continuous for $\epsilon$ in a neighborhood of $0$ and for all 
$v \in \mathbb{R}^k$ with $\|v\|=1$. We then define the score function 
\[
W^\theta(i,j) = \nabla_\theta \log \pi^{\theta}(i,j)\,,
\]
if $\pi^{\theta}(i,j) >0$, and set it equal to $0$ otherwise.
We assume, for simplicity, that the Markov chain starts in the (arbitrary) 
state $x_0$. The joint probability distribution of the Markov chain
on the time interval from $0$ to $n$ is 
\[
{P}_{n}^{\theta}(x_{1},...,x_{n})=\pi^{\theta}(x_{0}%
,x_{1})\times\cdots\times\pi^{\theta}(x_{n-1},x_{n})\,.
\]

For any $f: \Sigma \to \mathbb{R}$, the sequence of random variables  
$S_{n}=\frac{1}{n}\sum_{k=1}^{n}f(X_{k})$ satisfies a large deviation 
principle with a rate function which can be identified in terms of relative 
entropy \cite[Theorem 8.4.3]{Dupuis:97} { and the contraction principle for large deviations \cite[Theorem 1.3.2]{Dupuis:97}. } Let $l(i),i\in\Sigma$ denote a 
probability measure on the state space $\Sigma$ and let 
$\bar{\pi}(i,j)$ denote a transition kernel on $\Sigma$. 
For $\beta\in\mathbb{R}$ define%
\begin{equation}
I^\theta(\beta)={\inf_{\bar{\pi}, l} }\left\{  \sum_{i\in\Sigma}R\left(  \bar{\pi}(i,\cdot)\left\Vert
\pi^\theta(i,\cdot)\right.  \right)  l(i):\sum_{i\in\Sigma}f(i)l(i)=\beta,\sum
_{j\in\Sigma}l(j)\bar{\pi}(j,i)=l(i)\text{ for }i\in\Sigma\right\}  .
\label{eq:rate:Markov}
\end{equation}
The second constraint in this definition  implies that $l$ is invariant under
$\bar{\pi}$, while  the first constraint  enforces  that the mean of $f$ under $l$ is
$\beta$. The rate function $I^\theta=I^\theta(\beta)$ is then the minimum of a relative entropy cost for
\textquotedblleft tilting\textquotedblright\ from $\pi$ to $\bar{\pi}$, so
that the mean of $f$ under the stationary distribution of $\bar{\pi}$ is equal to  $\beta
$. An alternative representation of $I^\theta$ is as the Legendre transform 
of the $\log \lambda^\theta(\alpha)$, where $\lambda^\theta(\alpha)$ is the maximal 
eigenvalue of the positive matrix $\pi^\theta(i,j)e^{\alpha f(j)}$, \cite{Dembo:98}.  

Now we can directly deduce from Theorem \ref{ldp:sens:bound:gen} the following result on rare event sensitivity bounds for Markov sequences.
\begin{theorem}[Sensitivity indices for Markov sequences] 
Let 
$\{X_k\}_{k=1,2,\cdots}$ be a ergodic Markov chain on the finite state space 
$\Sigma$ with transition probabilities $\pi^\theta(i,j)$. Assume that 
the transition probabilities $\pi^\theta$ depend smoothly on $\theta$ and 
that for all  $i \in \Sigma $,  $\pi^\theta(i,j)$ and 
$\pi^{\theta + \epsilon v}(i,j)$ are mutually absolutely continuous for 
$\epsilon$ sufficiently small and $\|v\|=1$. 
For $f: \Sigma \to \mathbb{R}$ the sequence $S_n= \frac{1}{n}\sum_{k=1}^n f(X_k)$ satisfies a large deviation principle with rate function $I^\theta(\beta)$ given in \eqref{eq:rate:Markov} and we have the sensitivity bounds 
\begin{equation}
- \inf_{\alpha>0} \left\{ \frac{ h^\theta_v(-\alpha)  + \bar{I}^\theta (A) }{\alpha} \right\}      
\leq\underbar s_{v}^{\theta
}(A)\leq\bar{s}_{v}^{\theta}(A)\leq \inf_{\alpha>0}  \left\{ \frac{h^\theta_v(\alpha)  + \underbar{I}^{\theta} (A) }{\alpha} \right\}   \ ,
\label{SB:LDP:Markov1}%
\end{equation}
where 
\begin{equation*}
h^\theta_v(\alpha)\,=\, \lim_{n\to \infty}\frac{1}{n}
\mathbb{E}_{P^{\theta}_n}\left[ e^{\alpha v^T \sum_{k=1}^nW^\theta(X_{k-1}, X_{k})}\right]\,.
\label{SB:LDP:Markov2}%
\end{equation*}
\end{theorem}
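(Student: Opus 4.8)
The plan is to read the statement off Theorem~\ref{ldp:sens:bound:gen} after specializing it to the path-space measures of the chain, so the work reduces to three verifications: (i) that $S_n=\frac1n\sum_{k=1}^n f(X_k)$ obeys a large deviation principle with the rate function $I^\theta$ in \eqref{eq:rate:Markov}; (ii) that for each fixed $n$ the joint law $P^\theta_n$ satisfies the hypotheses of Theorems~\ref{sec5:thm1} and \ref{thm:index}, so that the finite-$n$ sensitivity bounds \eqref{eq:sensbounds:ldp:finiten} hold; and (iii) that $\frac1n H^\theta_{v,n}(\alpha)$ converges to the $h^\theta_v(\alpha)$ stated in the theorem. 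Granting these, the conclusion \eqref{SB:LDP:Markov1} is exactly what Theorem~\ref{ldp:sens:bound:gen} delivers.

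For (i) I would invoke the large deviation principle for the empirical pair measure of a finite irreducible Markov chain together with the contraction principle; in the ergodic finite-state setting this is \cite[Theorem 8.4.3]{Dupuis:97} combined with \cite[Theorem 1.3.2]{Dupuis:97}, and it produces precisely the variational formula \eqref{eq:rate:Markov}, the two constraints there expressing that $l$ is $\bar\pi$-invariant and that $\sum_i f(i)l(i)=\beta$. No integrability hypothesis on $f$ is needed because $\Sigma$ is finite; in particular $\overline I^\theta(A)$ and $\underline I^\theta(A)$ are well defined for every Borel $A\subset\mathbb{R}$.

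For (ii), since the initial state $x_0$ is deterministic and $\theta$-independent, the path density factorizes as $p^\theta_n(x_1,\dots,x_n)=\prod_{k=1}^n\pi^\theta(x_{k-1},x_k)$, and its score is the additive functional $W^\theta_n(x_1,\dots,x_n)=\sum_{k=1}^n W^\theta(x_{k-1},x_k)$ with $W^\theta(i,j)=\nabla_\theta\log\pi^\theta(i,j)$. The standing assumption that $\pi^\theta(i,j)$ and $\pi^{\theta+\epsilon v}(i,j)$ are mutually absolutely continuous for small $\epsilon$ means the support of $\pi^\theta$ is locally constant in $\theta$, so on that support $\log\pi^\theta$ is $\mathcal C^2$ and, $\Sigma$ being finite, both $W^\theta$ and $\nabla^2_\theta\log\pi^\theta$ are bounded (the convention $W^\theta\equiv0$ off the support is then harmless). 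Hence the exponential-integrability condition of Theorem~\ref{sec5:thm1} holds for every $n$, all relevant moment generating functions being finite, and \eqref{eq:sensbounds:ldp:finiten} applies to every event $A$ with $-\log P^\theta_n(A)\le M$.

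For (iii), writing $H^\theta_{v,n}(\alpha)=\log\mathbb{E}_{P^\theta_n}\big[\exp\{\alpha v^T\textstyle\sum_{k=1}^n W^\theta(X_{k-1},X_k)\}\big]$ and introducing the nonnegative irreducible tilted matrix $\pi^\theta_\alpha(i,j):=\pi^\theta(i,j)\,e^{\alpha v^T W^\theta(i,j)}$, the Markov property telescopes $\mathbb{E}_{P^\theta_n}[\,\cdot\,]$ into the $x_0$-indexed row-sum of $(\pi^\theta_\alpha)^n$; by Perron--Frobenius this forces $\frac1n H^\theta_{v,n}(\alpha)\to\log\rho(\pi^\theta_\alpha)=:h^\theta_v(\alpha)$, independently of $x_0$, where $\rho$ is the spectral radius (this is the maximal eigenvalue $\lambda^\theta(\alpha)$ already used in the eigenvalue representation of $I^\theta$, cf.\ \cite{Dembo:98}). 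It then remains to fix $\alpha_0>0$ in \eqref{eq:sensbounds:ldp:finiten}, divide by $n$, take $\limsup_n$ (resp.\ $\liminf_n$) while using \eqref{LDP:1} to bound $\limsup_n(-\frac1n\log P^\theta_n(A))\le\underline I^\theta(A)$ and $\liminf_n(-\frac1n\log P^\theta_n(A))\ge\overline I^\theta(A)$, and finally optimize over $\alpha_0$; this yields \eqref{SB:LDP:Markov1}. I expect the only genuinely non-routine ingredient to be (iii): establishing existence of $h^\theta_v(\alpha)$ and passing the limit through the one-dimensional infimum over $\alpha$ (handled here by first freezing $\alpha_0$ and only afterwards letting $n\to\infty$), although for a finite irreducible chain this is classical; the bookkeeping in (ii) around the zero pattern of $\pi^\theta$ is the secondary point that needs care.
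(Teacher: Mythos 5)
Your proposal is correct and follows essentially the same route as the paper, which offers no separate proof here but simply asserts the result is "directly deduced" from Theorem~\ref{ldp:sens:bound:gen}; your three verifications (the pair-empirical-measure LDP plus contraction, boundedness of the additive score on a finite state space to satisfy the hypotheses of Theorem~\ref{sec5:thm1}, and Perron--Frobenius for the existence of $h^\theta_v(\alpha)=\log\lambda^\theta(\alpha)$) are exactly the ingredients the paper invokes or remarks upon around the theorem statement. The only caveat is one you inherit from the paper's own Theorem~\ref{ldp:sens:bound:gen}: the freeze-$\alpha_0$-then-pass-to-the-limit argument controls $\limsup_n\bigl(-\tfrac1n\log P^\theta_n(A)\bigr)$ by $\underline{I}^\theta(A)$ in \emph{both} bounds, so the appearance of $\overline{I}^\theta(A)$ in the lower bound really requires $A$ to be an $I$-continuity set.
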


We can represent $h^\theta_v(\alpha)$ as the logarithm of the maximal eigenvalue 
of the matrix $p^\theta(i,j) e^{\alpha v^T W^\theta(i,j)}$. 
Alternatively, consider the empirical measure for pairs $(X_{i-1},X_{i})$, which has the rate function
\[
J(l\otimes\bar{\pi})=\sum_{i\in\Sigma}R\left(  \bar{\pi}(i,\cdot)\left\Vert
\pi^{\theta}(i,\cdot)\right.  \right)  l(i)
\]
whenever $l$ is stationary under $\bar{\pi}$ and $[l\otimes\bar{\pi
}](i,j)=l(i)\bar{\pi}(i,j)$ ($J$ is infinity otherwise). 
Using this rate function, we can evaluate $h^\theta_v(\alpha)$, since by Varadhan's lemma we have  
\begin{align*}
&  \lim_{n\rightarrow\infty}\frac{1}{n}\log\mathbb{E}_{P_{n}^{\theta}%
}e^{\alpha v^{T}\sum_{i=1}^{n} W^\theta(X_{i-1},X_{i}%
)}\\
&  \quad=\sup\left\{  \alpha v^{T}\sum_{i,j\in\Sigma}W^\theta(i,j)l(i)\bar{\pi}(i,j)-\sum_{i\in\Sigma}R\left(  \bar{\pi
}(i,\cdot)\left\Vert \pi(i,\cdot)\right.  \right)  l(i):\sum_{j\in\Sigma
}l(j)\bar{\pi}(j,i)=l(i)\text{ for }i\in\Sigma\right\}  .
\end{align*}
Similarly to
\eqref{eq:rate:Markov}, this last variational problem is  easily solved as a constrained convex programming problem.
We demonstrate such an implementation in the next example.

Indeed,  as a concrete example to illustrate the previous results we  consider a Markov chain with five states with values
$-2,...,2$. The transition probabilities are defined as
\[
\encircle{-2}  \underset{1-p_{-1}}{\overset{1}{\rightleftarrows}}
\encircle{-1} \underset{1-p_0}{\overset{p_{-1}}{\rightleftarrows}}
\encircle{0} \underset{1-p_1}{\overset{p_0}{\rightleftarrows}}
\encircle{1} \underset{1}{\overset{p_1}{\rightleftarrows}} \encircle{2} \ .
\]

Let $\theta=[p_{-1},p_{0},p_{1}]^{T}$ be the parameter vector and let
$\{X_{i}\}$ be a Markov chain created from the above transition laws, and let
$S_{n}$ be the empirical average of $X_{1},\ldots,X_{n}$. We study the
tightness of the sensitivity bound for the event $A=\{S_{n}=z\}$. Although $A$
is not an $I^{\theta}$-continuity set, using the special lattice structure of
the set of states in the support of  the empirical measure, we  can easily show that 
 $\lim_{n\rightarrow\infty}\frac{1}{n}\log P_{n}^{\theta}(S_{n}%
=z_{n})=-I^{\theta}(z)$, as long as $z_{n}\rightarrow z$ , when each
$z_{n}$ is of the form $i/n$. The rate functional $I^{\theta}(z)$ is finite in
the interval $[-1.5,1.5]$
and infinite elsewhere due to the fact that the transitions from state $2$
(resp. $-2$) cannot happen more than $n/2$ times making $\frac{1}{n}\sum
_{i=1}^{n}x_{i}\in\lbrack-1.5,1.5]$.

In Figure~\ref{si:sb:x:fig} we present the sensitivity indices and the associated
sensitivity bounds for various values of $z$ and various perturbation
vectors $v$. Similarly, in  Figure~\ref{si:sb:theta:fig} we depict the sensitivity
indices and the associated sensitivity bounds for various values of the
parameter vector. Both figures suggest that the sensitivity bounds are
informative and tightly follow the true  values of the gradient sensitivity indices \eqref{SI:LDP:upper}, which in the present setting coincide.

\begin{figure}[tbh]
\begin{center}
\includegraphics[width=\textwidth]{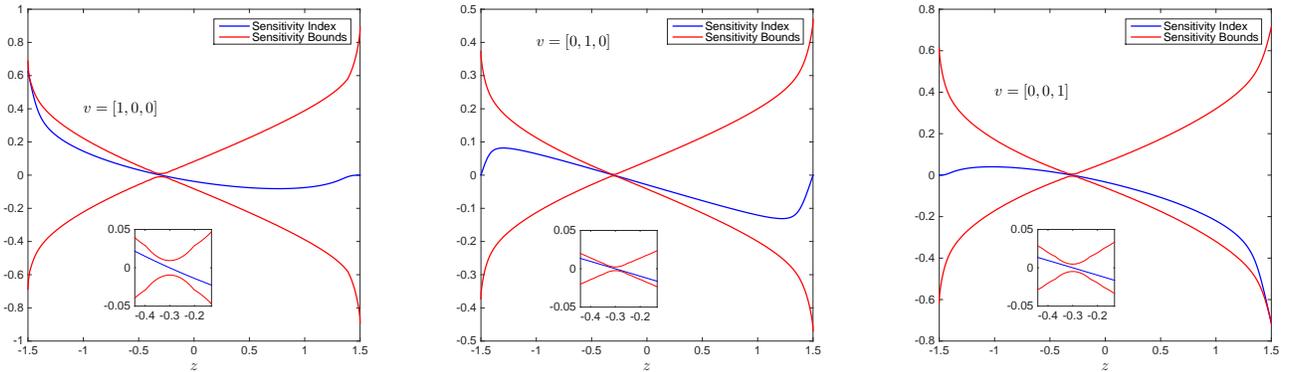}
\end{center}
\caption{ { Sensitivity index \eqref{SI:LDP:upper} (blue) } and the associated sensitivity bounds in { \eqref{SB:LDP:Markov1} (red)} for various
levels of rare event probability as quantified by the variation in the values of $z$.
Each panel corresponds to a different perturbation vector
$v$. From left to right the perturbation vectors are the three orthonormal
unit vectors. Parameter vector is kept fixed at $\theta=[0.2,0.5,0.7]^{T}$.
Inset plots zoom around the zero value for the sensitivity index.}%
\label{si:sb:x:fig}%
\end{figure}

\begin{figure}[tbh]
\begin{center}
\includegraphics[width=\textwidth]{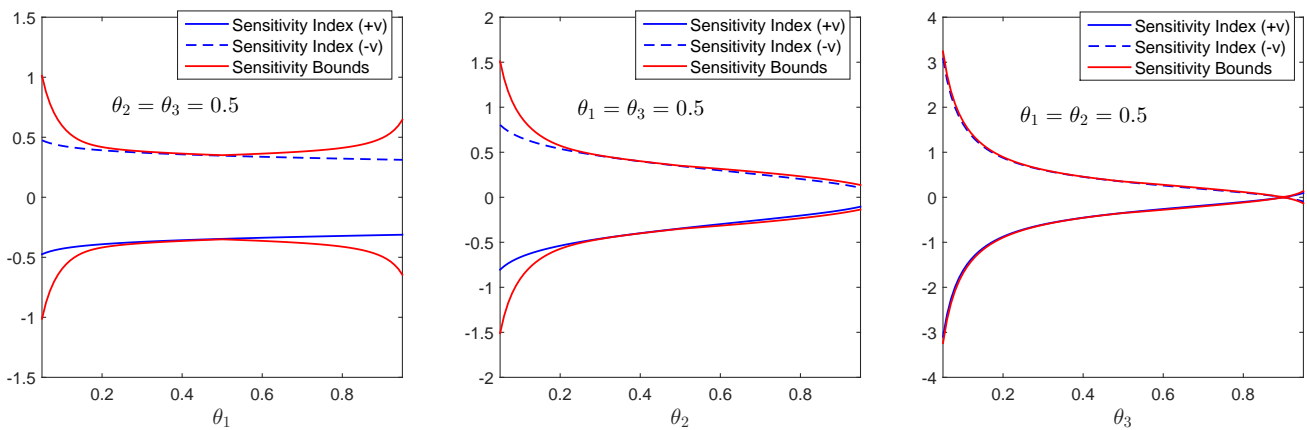}
\end{center}
\caption{Sensitivity index for $v$ and $-v$ and the associated sensitivity
bounds at different parameter regimes. Both the rare event and the perturbation vector
are kept fixed at $x=1$ and $v=\frac{1}{\sqrt{3}}[1,1,1]^{T}$. In each
panel, one of the parameters is varying while the other two are kept
constant. Evidently, sensitivity bounds tightly follow the sensitivity indices
in this demonstration.}%
\label{si:sb:theta:fig}%
\end{figure}

\appendix

\section{Appendix: Proposition and proofs} \label{app:a}

Let $g \in \mathcal{E}$ (see Definition \ref{def:gdef})
and denote by $g_{\pm} = \pm {\rm essup} \{ \pm 
g(x)\}$ its upper/lower bound (we allow the value $+ \infty$). Recall that $H(\alpha) = \log 
\mathbb{E}_P[e^{\alpha g}]$ is the moment generating function with 
Legendre transform $L(\delta) =  \sup_{\alpha \in \mathbb{R}}\left\{ 
\alpha \delta - H(\alpha)\right\}$ and that $P_\alpha$ is the 
exponential family  with $\frac{dP_\alpha}{dP} = e^{\alpha g - 
H(\alpha)}$.

\begin{proposition}
\label{CGF:LT:gen:prop} For $g \in \mathcal{E}$ we have 

\begin{itemize}
\item[(a)] The map $H(\alpha)$ is a convex function of $\alpha$, is 
finite in an interval $(d_-,d_+)$ with $d_-< 0 < d_+$ and 
$H(\alpha)=\infty$ for $\alpha \notin [d_-,d_+]$.  

In the interval $(d_-,d_+)$ 
the map $H(\alpha)$ is infinitely differentiable and strictly 
convex unless $g$ is constant $P$-a.s.;  
we have $H'(\alpha) = \mathbb{E}_{P_\alpha}[g]$ and 
$H''(\alpha)= \mathrm{Var}_{P_{\alpha}}(g)$. 

\item[(b)] The map $L(\delta)$ is a convex, non-negative and lower semi-continuous function of $\delta$ and $L(\mathbb{E}_{P} [g]) =0$.  If $g$ is not constant $P$-a.s. then $L(\delta)$ is strictly convex in the interval 
$(H'(d_-),  H(d_+))$ and for any $\delta \in (H'(d_-), H(d_+))$ there exists $\alpha \in (d_-,d_+)$ such that
$H'(\alpha)=\delta$
and 
\[
L(\delta)=\alpha H'(\alpha) -H(\alpha) = \mathcal{R}\left({P_{\alpha}}{\,||\,}{P}\right)  \ .
\]

\end{itemize}
\end{proposition}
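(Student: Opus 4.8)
The plan is to prove (a) and (b) in order, using only classical properties of moment generating functions which I will make precise. For (a), I would first obtain convexity of $H$ from H\"older's inequality: if $\alpha=t\alpha_1+(1-t)\alpha_2$ with $t\in(0,1)$, then $\mathbb{E}_P[e^{\alpha g}]=\mathbb{E}_P[(e^{\alpha_1 g})^t(e^{\alpha_2 g})^{1-t}]\le(\mathbb{E}_P[e^{\alpha_1 g}])^t(\mathbb{E}_P[e^{\alpha_2 g}])^{1-t}$, and taking logarithms gives $H(\alpha)\le tH(\alpha_1)+(1-t)H(\alpha_2)$. Since a convex function on $\mathbb{R}$ has an interval as effective domain, the set $\{\alpha:H(\alpha)<\infty\}$ is an interval, which I denote (up to endpoints) by $(d_-,d_+)$; by Definition~\ref{def:gdef} it contains $[-\alpha_0,\alpha_0]$ for some $\alpha_0>0$, so $d_-<0<d_+$, and $H=+\infty$ off $[d_-,d_+]$.

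For the smoothness I would fix $\alpha\in(d_-,d_+)$ and choose $\epsilon>0$ with $\alpha\pm\epsilon\in(d_-,d_+)$. Then for every integer $k\ge0$ one has the pointwise bound $|g|^k e^{\alpha g}\le C_k(e^{(\alpha+\epsilon)g}+e^{(\alpha-\epsilon)g})$ with $C_k=\sup_{t\ge0}t^k e^{-\epsilon t}<\infty$ (split according to the sign of $g$), and the right-hand side is $P$-integrable. This justifies repeated differentiation under the integral sign, giving $H\in C^\infty$ on $(d_-,d_+)$ with $H'(\alpha)=\mathbb{E}_P[g e^{\alpha g}]/\mathbb{E}_P[e^{\alpha g}]=\mathbb{E}_{P_\alpha}[g]$ and, after a second differentiation, $H''(\alpha)=\mathbb{E}_{P_\alpha}[g^2]-(\mathbb{E}_{P_\alpha}[g])^2=\mathrm{Var}_{P_\alpha}(g)\ge0$. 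This variance vanishes exactly when $g$ is $P_\alpha$-a.s.\ constant, which, since $P_\alpha$ and $P$ are mutually absolutely continuous, is the same as $g$ being $P$-a.s.\ constant; hence $H''>0$ and $H$ is strictly convex on $(d_-,d_+)$ whenever $g$ is non-constant.

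For (b), convexity and lower semicontinuity of $L$ are immediate, being a supremum of affine functions of $\delta$. Taking $\alpha=0$ in the supremum and using $H(0)=0$ gives $L(\delta)\ge0$; Jensen's inequality gives $H(\alpha)\ge\alpha\,\mathbb{E}_P[g]$ for all $\alpha$, so $\alpha\,\mathbb{E}_P[g]-H(\alpha)\le0$ with equality at $\alpha=0$, whence $L(\mathbb{E}_P[g])=0$. Now assume $g$ non-constant, so $H'$ is continuous and strictly increasing on $(d_-,d_+)$ with range the open interval $(H'(d_-),H'(d_+))$ (one-sided limits, allowing $\pm\infty$). For $\delta$ in this range let $\alpha=\alpha(\delta)\in(d_-,d_+)$ be the unique solution of $H'(\alpha)=\delta$; the concave function $\alpha\mapsto\alpha\delta-H(\alpha)$ has derivative $\delta-H'(\alpha)$, positive to the left of $\alpha(\delta)$ and negative to its right, equals $-\infty$ where $H=\infty$, and is no larger on a finite boundary point by concavity, so the supremum defining $L(\delta)$ is attained at $\alpha(\delta)$ and $L(\delta)=\alpha H'(\alpha)-H(\alpha)$. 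Finally $\RELENT{P_\alpha}{P}=\mathbb{E}_{P_\alpha}[\log\tfrac{dP_\alpha}{dP}]=\mathbb{E}_{P_\alpha}[\alpha g-H(\alpha)]=\alpha\,\mathbb{E}_{P_\alpha}[g]-H(\alpha)=\alpha H'(\alpha)-H(\alpha)$, which identifies the three expressions; and on $(H'(d_-),H'(d_+))$ we have $L'=(H')^{-1}$, the inverse of a strictly increasing function and hence itself strictly increasing, so $L$ is strictly convex there.

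The step I expect to require the most care is the justification of differentiation under the integral and the attendant $C^\infty$ smoothness of $H$ on the open interval: the domination bound above is the crux, and one should separately record that when $g$ is $P$-a.s.\ bounded one has $d_\pm=\pm\infty$ and the boundary values $H'(d_\pm)$ are the essential supremum/infimum of $\pm g$. The second delicate point is the precise description of the range of $H'$ and of the behaviour of $\alpha\mapsto\alpha\delta-H(\alpha)$ at the endpoints, which is what guarantees that the supremum in the definition of $L$ is genuinely attained in the interior for $\delta\in(H'(d_-),H'(d_+))$, and hence that the stated formula for $L(\delta)$ holds on exactly that interval.
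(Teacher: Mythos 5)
Your proof is correct and complete. Note that the paper does not actually prove this proposition: its ``proof'' is a one-line citation to standard large deviations references (Lemma~2.2.5 and Exercise~2.2.24 of Dembo--Zeitouni), so what you have written is a self-contained version of exactly the classical argument behind those results --- H\"older's inequality for convexity of $H$ and the interval structure of its effective domain, the domination $|g|^k e^{\alpha g}\le C_k\bigl(e^{(\alpha+\epsilon)g}+e^{(\alpha-\epsilon)g}\bigr)$ to justify differentiation under the integral, the first-order condition $H'(\alpha)=\delta$ to evaluate the Legendre transform, and the identity $\mathcal{R}\left({P_{\alpha}}{\,||\,}{P}\right)=\mathbb{E}_{P_\alpha}[\alpha g-H(\alpha)]=\alpha H'(\alpha)-H(\alpha)$. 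The two genuinely delicate points are both handled correctly: the attainment of the supremum defining $L(\delta)$ at an interior point for $\delta\in(H'(d_-),H'(d_+))$, using that a concave function cannot jump upward at a finite endpoint of its domain (consistent with the monotone/dominated convergence argument the paper uses elsewhere to show $H(d_+)=\lim_{\alpha\uparrow d_+}H(\alpha)$); and the fact that $\mathrm{Var}_{P_\alpha}(g)=0$ forces $g$ to be constant $P$-a.s.\ because $P_\alpha$ and $P$ are mutually absolutely continuous. The closing observation that $L'=(H')^{-1}$ is strictly increasing, hence $L$ strictly convex on that interval, is the standard way to finish and is valid. No gaps.
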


\begin{proof} These are standard results used in the theory of large deviations, see  e.g. \cite[Lemma~2.2.5, Exercise~2.2.24]{Dembo:98}.
\end{proof}

We turn next to the proof of Proposition \ref{aux:bound:func:gen:prop}. Most ingredients 
in the proof have appeared in various recent papers by (some of) the authors and their collaborators \cite{Chowdhary:13,Dupuis:16,Gourgoulias:17}.
The formulation here is slightly different and since the results play a central role in the paper 
we provide a proof for convenience and completeness.

\begin{proof}[Proof of Proposition \ref{aux:bound:func:gen:prop}]
First note that it is enough to prove the result for $H(\alpha)$ since the 
result for $H(-\alpha)$ is obtained by replacing $g$ by $-g$. 

We first claim that automatically%
\[
H(d_{+})=\lim_{\alpha\uparrow d_{+}}H(\alpha),
\]
where $H(d_{+})$ may be infinite. By monotone convergence
\[
\mathbb{E}_{P}[1_{\{g\geq0\}}e^{\alpha g}]\uparrow\mathbb{E}_{P}%
[1_{\{g\geq0\}}e^{d_{+}g}]
\]
as $\alpha\uparrow d_{+}$. By dominated convergence%
\[
\mathbb{E}_{P}[1_{\{g<0\}}e^{\alpha g}]\downarrow\mathbb{E}_{P}[1_{\{g<0\}}%
e^{d_{+}g}]
\]%
as $\alpha\uparrow d_{+}$, and the claim follows. A very similar argument shows that $H^{\prime
}(\alpha)$ also has a limit as $\alpha\uparrow d_{+}$.

Let
\begin{equation}\label{eq:B:Appendix}
B(\alpha;M)=\frac{H(\alpha)+M}{\alpha}.
\end{equation}
We divide into cases. Always we have $H(0)=0$.

\begin{enumerate}
\item $g_{+}<\infty$. In this case $H^{\prime}(\alpha)\uparrow g_{+}<\infty$
as $\alpha\rightarrow\infty$ and $H^{\prime}(0)<g_{+}$. If $M=0$ then the
infimum is $H^{\prime}(0)$ and attained at $\alpha_{+}=0$. If $M>0$ then let
\[
B^{\prime}(\alpha;M)=\frac{\alpha H^{\prime}(\alpha)-H(\alpha)-M}{\alpha^{2}}%
\]
for $\alpha\geq0$. The function $\alpha H^{\prime}(\alpha)-H(\alpha)$ strictly
increases from $0$ at $\alpha=0$ to some limit $M_{+}>0$ at $\alpha=\infty$,
and the minimizer is at the unique finite root of $\alpha H^{\prime}%
(\alpha)-H(\alpha)=M$ for $M<M_{+}$ and $\alpha_{+}=\infty$ for $M\geq M_{+}$.

\item $g_{+}=\infty$. In this case there are two subcases.

\begin{enumerate}
\item $d_{+}=\infty$. In this case since $g_{+}=\infty$ we have $H^{\prime
}(\alpha)\uparrow\infty$ as $\alpha\rightarrow\infty$ and $\alpha H^{\prime
}(\alpha)-H(\alpha)\rightarrow\infty$ as $\alpha\rightarrow\infty$. Since  
$0H^{\prime}(0)-H(0)=0$, in all cases of $M\geq0$ there is a unique root
to $\alpha H^{\prime}(\alpha)-H(\alpha)=M$ and hence a unique minimizer.

\item $d_{+}<\infty$. We know that $H^{\prime}(\alpha)$ converges as
$\alpha\uparrow d_{+}$ to a well defined left hand limit which we call
$H^{\prime}(d_{+})$ (note that this value could be $\infty$). Thus we have
that $\alpha H^{\prime}(\alpha)-H(\alpha)$ ranges from $0$ at $\alpha=0$ to
$M_{+}=d_{+}H^{\prime}(d_{+})-H(d_{+})$. For $M\in\lbrack0,M_{+})$ there is a
unique minimizer in $[0,d_{+})$. For $M\geq M_{+}$ the unique minimizer is at
$\alpha_{+}=d_{+}$.
\end{enumerate}
\end{enumerate}

 To conclude the proof we note that if 
 $\alpha_+ < d_+$ then  
 $$\alpha_+ H'(\alpha_+)- H(\alpha_+) = \mathcal{R}(P_{\alpha_+}\,||\,P)=M
\,, $$
 and thus 
 \[
 B(\alpha_+, M) = H'(\alpha_+)=\mathbb{E}_{P_{\alpha_+}[g]}
 \]
which proves \eqref{eq:Bpmeqn} and \eqref{eq:cond:re}.  Finally if $d_+=\infty$ and $g$ is $P$-a.s. bounded above then the infimum is equal to $\lim_{\alpha \to \infty}\frac{H(\alpha)}{\alpha}$ and this establishes \eqref{eq:gplus}.
If $d_+ < \infty$ and $M_+ < \infty$ then the bound takes the form \eqref{eq:dplus}.

\end{proof}

\bibliographystyle{unsrt}
\bibliography{ref-sensitivity-rare-events}

\end{document}